\tikzset{pullback/.style={minimum size=1.2ex,path picture={
\draw[opacity=1,black,-,#1] (-0.5ex,-0.5ex) -- (0.5ex,-0.5ex) -- (0.5ex,0.5ex);%
}}}
\theoremstyle{plain}
\newtheorem{theorem}{Theorem}[section]
\newtheorem{proposition}[theorem]{Proposition}
\newtheorem{lemma}[theorem]{Lemma}
\newtheorem{corollary}[theorem]{Corollary}
\newtheorem{fact}[theorem]{Fact}
\theoremstyle{definition}
\newtheorem{example}[theorem]{Example}
\newtheorem{definition}[theorem]{Definition}
\newtheorem{notation}[theorem]{Notation}
\newtheorem{question}[theorem]{Question}
\theoremstyle{remark}
\newtheorem{remark}[theorem]{Remark}
\newcommand{\dq}[1]{``#1"}
\newcommand{\memo}[1]{\textcolor{red}{memo: #1}}
\newcommand{\invmemo}[1]{}
\newcommand{\revmemo}[1]{}
\newcommand{\N}{\mathbb{N}}
\newcommand{\Z}{\mathbb{Z}}
\newcommand{\R}{\mathbb{R}}
\newcommand{\C}{\mathcal{C}}
\newcommand{\comp}{\mathbb{C}}
\newcommand{\D}{\mathcal{D}}
\newcommand{\id}{\mathrm{id}}
\newcommand{\Set}{\mathbf{Set}}
\newcommand{\demph}[1]{\textbf{#1}}
\newcommand{\ob}{\mathrm{ob}}
\newcommand{\X}{\mathbb{X}}
\newcommand{\Y}{\mathbb{Y}}
\newcommand{\W}{\mathbb{W}}
\newcommand{\Star}{\mathbb{S}}
\newcommand{\gS}{\mathbb{S}}
\newcommand{\A}{\mathbb{A}}
\newcommand{\Pow}{\mathcal{P}}
\newcommand{\Pf}{\Pow_{\mathrm{fin}}}
\newcommand{\Gs}{\mathbf{Game}}
\newcommand{\Graphs}{\mathbf{Graphs}}
\newcommand{\nsum}{\oplus}
\newcommand{\Alg}[1]{\mathbf{Alg}_{#1}}
\newcommand{\Coalg}[1]{\mathbf{Coalg}_{#1}}
\newcommand{\RecCoalg}[1]{\mathbf{RecCoalg}_{#1}}
\newcommand{\rel}{\to}
\newcommand{\acc}{\succeq}
\newcommand{\accneq}{\succ}
\newcommand{\str}{\theta}
\newcommand{\Image}{\mathrm{Im}}
\newcommand{\Nim}[1]{\mathrm{Nim}_{#1}}
\newcommand{\ElM}{\mathrm{ElM}}
\renewcommand{\H}{\mathbb{H}}
\newcommand{\epi}{twoheadrightarrow}
\newcommand{\mono}{rightarrowtail}
\newcommand{\B}{\mathcal{B}}
\newcommand{\ADJ}[4]
    {
    \begin{tikzcd}[ampersand replacement = \&, column sep = small]
        {#1}
        \ar[rr, shift right=1.3ex, "{#2}"']
        \&\perp\&
        {#3}
        \ar[ll, shift right=1.3ex,"{#4}"']
    \end{tikzcd}
    }
\newcommand{\oc}{\mathsf{Outcome}}
\newcommand{\Moc}{\mathsf{MOutcome}}
\newcommand{\rd}{\mathsf{\xi}}
\newcommand{\NP}{\mathsf{NP}}
\newcommand{\np}{\mathsf{np}}
\newcommand{\bin}{\mathsf{bin}}
\newcommand{\MNP}{\mathsf{MNP}}
\newcommand{\mnp}{\mathsf{mnp}}
\newcommand{\End}{\mathsf{End}}
\newcommand{\Empty}{\mathsf{Empty}}
\newcommand{\emptyFunc}{\mathsf{empty}}
\newcommand{\True}{\mathsf{True}}
\newcommand{\FinTime}{\mathsf{FinTime}}
\newcommand{\mex}{\mathsf{mex}}
\newcommand{\xem}{\mathsf{xem}}
\newcommand{\Mex}{\mathsf{Mex}}
\newcommand{\Xem}{\mathsf{Xem}}
\newcommand{\val}{\mathsf{v}}
\newcommand{\hylo}{\mathsf{hylo}}
\newcommand{\G}{\mathsf{Grundy}}
\newcommand{\Gb}{\mathsf{b}}
\newcommand{\BirthDay}{\mathsf{BirthDay}}
\newcommand{\Remoteness}{\mathsf{Remoteness}}
\newcommand{\rbmult}{{\ast_{\H}}}
\DeclareMathOperator*{\colim}{colim}
\newcommand{\pr}[1]{{\color{red}\{ #1 \}}}
\newcommand{\pg}[1]{{\color{green!60!black}\{ #1 \}}}
\newcommand{\pb}[1]{{\color{blue}\{ #1 \}}}
\newcommand{\pd}[1]{{\color{black}\{ #1 \}}}
\title{Games as recursive coalgebras\\
A categorical view on the Nim-sum}
\author{Ryuya Hora}
\subjclass[2020]{91A46, 18C50, 03B70}
\keywords{Impartial games, nim-sum, Conway addition, recursive coalgebra, hylomorphism, monoidal category, monoid, locally presentable category, subobject classifier.}
\address{Graduate School of Mathematical Sciences, University of Tokyo, Tokyo, Japan}
\email{hora@ms.u-tokyo.ac.jp}
\date{\today}
\begin{document}
\begin{abstract}
    In 1901, Bouton proved that a winning strategy for the game of Nim is given by the bitwise XOR, called the nim-sum. But why does such a weird binary operation work?
    Led by this question, this paper introduces a categorical reinterpretation of combinatorial games and the nim-sum. The main categorical gadget used here is \textit{recursive coalgebras}, which allow us to redefine games as ``graphs on which we can conduct recursive calculations'' in a concise and precise way.

    For game-theorists, we provide a systematic framework to decompose an impartial game into simpler games and synthesize the quantities on them, which generalizes the nim-sum rule for the Conway addition. To read the first half of this paper, the categorical preliminaries are limited to the definitions of categories and functors.

    For category theorists, this paper offers a nicely behaved category of games $\mathbf{Game}$, which is a locally finitely presentable symmetric monoidal closed category comonadic over $\mathbf{Set}$ admitting a subobject classifier!

    As this paper has several ways to be developed, we list seven open questions in the final section.
\end{abstract}
\maketitle
\tableofcontents
Throughout this paper, $\N$ denotes the set of all non-negative integers $\N = \{0,1,2, \dots\}$.
\section{Introduction}\label{sec:Introduction}
\subsection{Why does the nim-sum appear?}
\invmemo{Context in combinatorial game theory}
In 1901, Bouton discovered the remarkable (and now very famous) winning strategy of the game \demph{nim} in \cite{bouton1901nim}.
In $n$-heap nim, there are $n$ heaps of stones, with $a_1, \dots, a_n$ stones in each heap. Two players take turns removing stones, but on each turn, they must remove stones from one heap (and \dq{removing $0$ stones} is not allowed.) The player who can no longer make a move loses\footnote{Usually, the rule is stated as \dq{the player who takes the last stone wins,} but it does not make sense in the particular situation where $a_1 = a_2 = \cdots = a_n = 0$ from the start.}. For example, \Cref{fig:NimPlay} is a typical play from the state $(a_1,a_2,a_3)=(2,3,3)$, where $A$ wins.
\begin{figure}[ht]
\centering
\begin{tikzpicture}[>=Latex, thick, scale=0.7]
  \def\rowgap{3.0} 
  \def\r{0.8}      
  \def\dx{2.0}     

  \coordinate (R0) at (0, 0*\rowgap);
  \path (R0) ++(-\dx,0) coordinate (R0H1);
  \path (R0) ++( 0,  0) coordinate (R0H2);
  \path (R0) ++( \dx,0) coordinate (R0H3);

  \draw (R0H1) circle (\r);
  \draw (R0H2) circle (\r);
  \draw (R0H3) circle (\r);

  \fill ($(R0H1)+(-0.25,0)$) circle (2pt);
  \fill ($(R0H1)+( 0.25,0)$) circle (2pt);
  \fill ($(R0H2)+(-0.30, 0.20)$) circle (2pt);
  \fill ($(R0H2)+( 0.00,-0.25)$) circle (2pt);
  \fill ($(R0H2)+( 0.30, 0.20)$) circle (2pt);
  \fill ($(R0H3)+(-0.25,0)$) circle (2pt);
  \fill ($(R0H3)+( 0.25,0)$) circle (2pt);

  \node[right] at ($(R0H3)+(\r+0.6,0)$) {$(2,3,2)$};

  \coordinate (R1) at (0, -1*\rowgap);
  \path (R1) ++(-\dx,0) coordinate (R1H1);
  \path (R1) ++( 0,  0) coordinate (R1H2);
  \path (R1) ++( \dx,0) coordinate (R1H3);

  \draw (R1H1) circle (\r);
  \draw (R1H2) circle (\r);
  \draw (R1H3) circle (\r);

  \fill ($(R1H1)+(-0.25,0)$) circle (2pt);
  \fill ($(R1H1)+( 0.25,0)$) circle (2pt);
  \fill ($(R1H2)+(-0.30, 0.20)$) circle (2pt);
  \fill ($(R1H2)+( 0.00,-0.25)$) circle (2pt);
  \fill ($(R1H2)+( 0.30, 0.20)$) circle (2pt);
  \fill ($(R1H3)+(0,0)$) circle (2pt);

  \node[right] at ($(R1H3)+(\r+0.6,0)$) {$(2,3,1)$};
  \draw[->] ($(R0)+(0,-1.1)$) -- ($(R1)+(0,1.1)$) node[midway,right] {$A$};

  \coordinate (R2) at (0, -2*\rowgap);
  \path (R2) ++(-\dx,0) coordinate (R2H1);
  \path (R2) ++( 0,  0) coordinate (R2H2);
  \path (R2) ++( \dx,0) coordinate (R2H3);

  \draw (R2H1) circle (\r);
  \draw (R2H2) circle (\r);
  \draw (R2H3) circle (\r);

  \fill ($(R2H1)+(0,0)$) circle (2pt);
  \fill ($(R2H2)+(-0.30, 0.20)$) circle (2pt);
  \fill ($(R2H2)+( 0.00,-0.25)$) circle (2pt);
  \fill ($(R2H2)+( 0.30, 0.20)$) circle (2pt);
  \fill ($(R2H3)+(0,0)$) circle (2pt);

  \node[right] at ($(R2H3)+(\r+0.6,0)$) {$(1,3,1)$};
  \draw[->] ($(R1)+(0,-1.1)$) -- ($(R2)+(0,1.1)$) node[midway,right] {$B$};

  \coordinate (R3) at (0, -3*\rowgap);
  \path (R3) ++(-\dx,0) coordinate (R3H1);
  \path (R3) ++( 0,  0) coordinate (R3H2);
  \path (R3) ++( \dx,0) coordinate (R3H3);

  \draw (R3H1) circle (\r);
  \draw (R3H2) circle (\r);
  \draw (R3H3) circle (\r);

  \fill ($(R3H1)+(0,0)$) circle (2pt);
  \fill ($(R3H3)+(0,0)$) circle (2pt);

  \node[right] at ($(R3H3)+(\r+0.6,0)$) {$(1,0,1)$};
  \draw[->] ($(R2)+(0,-1.1)$) -- ($(R3)+(0,1.1)$) node[midway,right] {$A$};

  \coordinate (R4) at (0, -4*\rowgap);
  \path (R4) ++(-\dx,0) coordinate (R4H1);
  \path (R4) ++( 0,  0) coordinate (R4H2);
  \path (R4) ++( \dx,0) coordinate (R4H3);

  \draw (R4H1) circle (\r);
  \draw (R4H2) circle (\r);
  \draw (R4H3) circle (\r);

  \fill ($(R4H1)+(0,0)$) circle (2pt);

  \node[right] at ($(R4H3)+(\r+0.6,0)$) {$(1,0,0)$};
  \draw[->] ($(R3)+(0,-1.1)$) -- ($(R4)+(0,1.1)$) node[midway,right] {$B$};

  \coordinate (R5) at (0, -5*\rowgap);
  \path (R5) ++(-\dx,0) coordinate (R5H1);
  \path (R5) ++( 0,  0) coordinate (R5H2);
  \path (R5) ++( \dx,0) coordinate (R5H3);

  \draw (R5H1) circle (\r);
  \draw (R5H2) circle (\r);
  \draw (R5H3) circle (\r);

  \node[right] at ($(R5H3)+(\r+0.6,0)$) {$(0,0,0)$};
  \draw[->] ($(R4)+(0,-1.1)$) -- ($(R5)+(0,1.1)$) node[midway,right] {$A$};
\end{tikzpicture}
\caption{A play of $3$-heap Nim, where $A$ wins.}
\label{fig:NimPlay}
\end{figure}
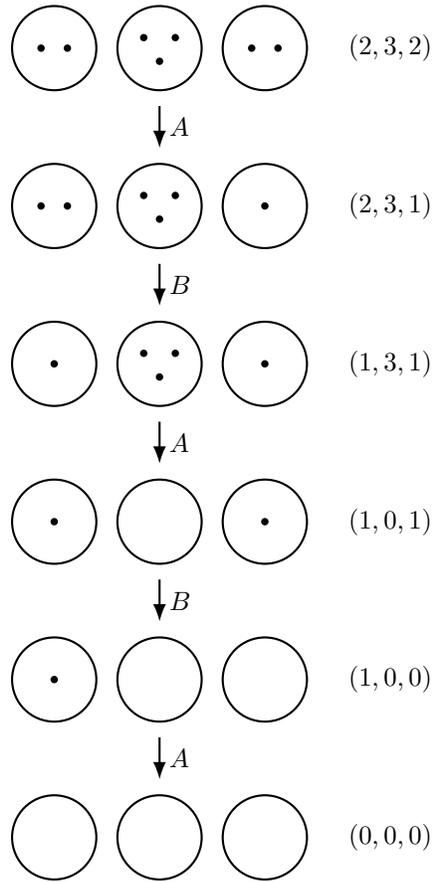

Bouton's winning strategy is based on a nicely designed group structure on $\N$ called \demph{nim-sum}. (We will define the nim-sum again in a more formal way in \Cref{def:NimSum2}.)
\begin{definition}\label{def:NimSum}
    The \demph{nim-sum} of two natural numbers is the bit-wise excluded disjunction.
\end{definition}
For example, the nim-sum of $3$ and $5$ is $6$ (\Cref{fig:NimSum}).
\begin{figure}[ht]
\centering
\begin{tikzpicture}[thick,>=Latex,scale=1]
  \def\w{0.6}   
  \def\h{0.8}   

  \coordinate (C2) at (0,0);
  \coordinate (C1) at (\w,0);
  \coordinate (C0) at (2*\w,0);

  \newcommand{\digit}[3]{%
    \node at ($(#1)+(0.5*\w,-#3*\h-0.5*\h)$) {$#2$};
  }

  \node[anchor=east] at ($(C2)+(-0.5*\w,-0.5*\h)$) {$3=$};
  \digit{C2}{0}{0}
  \digit{C1}{1}{0}
  \digit{C0}{1}{0}

  \node[anchor=east] at ($(C2)+(-0.5*\w,-1.5*\h)$) {$5=$};
  \digit{C2}{1}{1}
  \digit{C1}{0}{1}
  \digit{C0}{1}{1}


  \draw[very thick] ($(C2)+(-1.8*\w,-2*\h)$) -- ($(C0)+(\w,-2*\h)$);

  \node[anchor=east] at ($(C2)+(-0.5*\w,-2.5*\h)$) {$6=$};
  \digit{C2}{1}{2}
  \digit{C1}{1}{2}
  \digit{C0}{0}{2}

  \node[anchor=east] at ($(C2)+(-\w,-1*\h)$) {$\nsum$};
\end{tikzpicture}
\caption{Nim-sum calculation: $3\oplus 5=6$ since $011\oplus 101=110$ in binary expression}
\label{fig:NimSum}
\end{figure}

Bouton's theorem \cite{bouton1901nim} claims that the winning strategy of nim is to make move so that the resulting state $(a_1, \dots, a_n)$ has $0$ as its nim-sum $a_1 \nsum \cdots \nsum a_n=0$. One can check that the player $A$ conduct the winning strategy in \Cref{fig:NimPlay}. Later, Nim-sum is proven to be important not only for Nim, but also for general combinatorial game theory. (See textbooks on combinatorial game theory, including \cite{siegel2013combinatorial}.)


Once you know the strategy, it is not very difficult to prove that it really is a winning strategy. However, when one shows it, people often react in a similar way: \dq{I understand that it works, but \demph{why does it work with this weird operation called nim-sum?}}
The initial motivation for this paper was to answer this question concerning a conceptual origin of the nim-sum, and in fact, we provide a categorical characterization of it (\Cref{prop:CategoricalCharacterizationOfNim-Sum}). Although we admit that our characterization is not yet fully satisfying to call a \dq{conceptual understanding} (cf. \Cref{ssec:missingPiece}), we expect that our framework serves as a theoretical foundation for further research\footnote{A follow-up paper, which is a joint work with Ryo Suzuki, will be uploaded later.}.

\subsection{Recursions in game theory and coalgebra theory}
In order to obtain a categorical understanding of the nim-sum, we first need to seek a natural category of impartial games. While there are already several categories of games known in category theory community, represented by Joyal's one \cite{joyal1977remarques}, our approach is a categorical theory of recursions (see also \Cref{sssec:relatedWorks}).

Combinatorial game theory is inherently recursive (or inductive). Most of the important notions in game theory, including Grundy numbers, Conway addition, birthday, outcome, and even the notion of a game itself(!), are usually defined in a recursive (or inductive) way (see \cite{siegel2013combinatorial}). In the appendix of his book \textit{On numbers and games}
\cite[][]{conway2000numbers}, John H. Conway says that
\begin{quote}
    [...] all that is needed to justify the induction is the principle:
    \dq{If $P$ is some proposition that holds for $x$ whenever it holds for all $x^L$ and $x^R$, then $P$ holds universally.} \cite[Appendix to Part Zero,][]{conway2000numbers}
\end{quote}
in the context of the foundations of mathematics.

In category theory, this kind of recursion scheme is addressed by \demph{recursive coalgebras}, which we will recall in \Cref{ssec:PreliminariesOnCoalgebraicMethod}. The notion of recursive coalgebra first appears in \cite[Section 6,][]{osius1974categorical} in the context of categorical (or topos-theoretic) set theory, and then was generalized and developed in \cite{taylor1999practical}. In this paper, we call recursive $\Pf$-coalgebras \textit{games} (\Cref{thm:main1:GamesAsRecursiveCoalgebras}), and develop a categorical theory of games (cf. \Cref{tab:coalgebra_game_theory_comparison}). 


\subsection{Our contribution}
We introduce the category of games $\Gs$ in \Cref{sec:GamesAsRecursiveCoalgebras} as the category of recursive $\Pf$-coalgebras and reinterpret impartial game theory from that point of view (\Cref{tab:coalgebra_game_theory_comparison}). For example, we interpret the notion of game values, such as Grundy numbers, remoteness, outcomes, and so on, in terms of hylomorphisms. To the best of the author’s knowledge, this is the first paper that interprets game theory explicitly using recursive coalgebras (see also related papers in \Cref{sssec:relatedWorks}).

Then, in \Cref{sec:NimSumTypeTheoremSchema}, we introduce the notion of \demph{Bouton monoid} (\Cref{def:BoutonMonoidAsUniversal}), which generalizes the nim-sum (\Cref{prop:CategoricalCharacterizationOfNim-Sum}). Our definition of the Bouton monoid is based on its universality; the universal monoid that decomposes a game into smaller games and synthesizes the game values from the smaller ones to the original.
Our main theorem (\Cref{thm:existenceTheoremOfBoutonMonoid}) proves that, for any decomposition rule ($=$ monoidal structure) and any target game value (with some mild conditions), there exists a Bouton monoid.

As we believe that this is a fruitful way to pursue,
in the last section (\Cref{sec:OpenQuestions}), we list some open questions along with some new propositions.

For category theorists, in \Cref{app:CategoricalPropertiesOfGames}, we observe that the category $\Gs$ has many pleasant properties; $\Gs$ is a locally finitely presentable symmetric monoidal closed category comonadic over $\Set$, admitting an (Epi,Mono)-orthogonal factorization system and a subobject classifier!


\subsubsection{Related works}\label{sssec:relatedWorks}
There are already studies of related categories. For example, \cite{honsell2009conway} utilizes a terminal coalgebra to study \textit{hypergames}. They consider (hyper)games as elements of the (class size) terminal coalgebra. 

A recent study \cite{bavsic2024categories}, which is indepent of our study, is from a more game-theoretic community and closer to our approach. While they do not use the coalgebra theory explicitly, their approach is very recursive-coalgebra-theoretic, and their categories are quite similar to our category.
In fact, what they call the category of \textit{rulegraphs} $\mathbf{RGph}$ is equivalent to the category of recursive coalgebras $\RecCoalg{\Pow}$ for the covariant powerset functor $\Pow \colon \Set \to \Set$, which is the prototypical example of the theory of recursive coalgebras. Furthermore, the notion of hylomorphism implicitly appears in \cite[Section 4]{bavsic2024categories}, which we will explain in terms of recursive coalgerbras in \Cref{ssec:GameValuesAsPfAlgebras}.

While our category of games $\Gs \simeq \RecCoalg{\Pf}$ is a full subcategory of theirs $\mathbf{RGph}\simeq \RecCoalg{\Pow}$, their categorical properties are different. For example, the content of \Cref{sec:NimSumTypeTheoremSchema} needs the terminal object, which does not exists in $\RecCoalg{\Pow}$ but exists in $\Gs$.
Our category $\Gs$ is locally finitely presentable, and hence equivalent to an category of models of an essentially algebraic theory. This might contribute to their idea of the analogy between the category of games and the category of algebras, emphasized both in \cite{bavsic2024categories} and \cite{baltushkin2025isomorphism}.



The category of $\Pow$-coalgebra has been studied also in the context of modal logic. In particular, a recent work \cite{de2024profiniteness} studies the category of locally finite Kripke frames, which subsumes our category $\Gs$ as its full subcategory. Some of the content of \Cref{app:CategoricalPropertiesOfGames} have already appeared in \cite{de2024profiniteness}. See \Cref{rem:KripkeFrame} for more details.


\textbf{Acknowledgment}
The author would like to thank his supervisor, Ryu Hasegawa, for helpful discussions and suggestions, in particular for suggesting the use of hylomorphisms in this research.
His appreciation extends to Tomoaki Abuku, Koki Suetsugu, Paul Taylor, Takeshi Tsukada, Kazuyuki Asada, Syoei Suzuki, Ryo Suzuki, Kyosuke Higashida, Ivan Di Liberti, and Keisuke Hoshino for their valuable discussions, as well as Math Space topos and Tokyo Kanda Laboratory (NII) for offering him a place for discussion.
The content of this paper was first announced at \href{https://sites.google.com/view/jcgtw/%E7%A0%94%E7%A9%B6%E9%9B%86%E4%BC%9A?#h.avbqzhxax0hj}{Japan Combinatorial Game Theory Mini-Workshops with Urban Larsson} in May 2023. The author would like to thank the organizers.




The author was supported by JSPS KAKENHI Grant Number JP24KJ0837 and FoPM, WINGS Program, the University of Tokyo. The research was supported by ROIS NII Open Collaborative Research 2025-251M-22791.



\section{Games as graphs}\label{sec:GamesasGraphs}
In this section, for those who are not familiar with combinatorial game theory, we recall the game-theoretic notions and phenomena that will be reinterpreted and generalized in the following two sections. For more details, see the standard textbooks on combinatorial game theory, including \cite{siegel2013combinatorial}.

\subsection{Games and Outcomes}
Since the notion of a game is very fundamental, it has been studied from many perspectives, and many different mathematical formulations have been proposed.
In this paper, we begin our discussion with a graph-theoretic\footnote{Actually, this is not quite graph-theoretic in the sense of \Cref{rem:NotGraphStructureAndProperties}} formulation of \textit{impartial combinatorial games}. Other kinds of games will be mentioned in \Cref{rem:HasColimitsAndGameWithStartingPoint}, \Cref{ssec:PartisanGames}, and \Cref{sssec:OtherFunctors}.
The idea of the formulation is quite simple; a vertex is a state of the game, and an edge is a possible move. 
\begin{definition}[(impartial) games]\label{def:game}
    A \demph{game} $\X$ is a pair $\X = (X, \rel)$ of a (possibly infinite) set $X$ and a relation ${\rel}\subset X \times X$ that satisfies two finiteness conditions:
    \begin{enumerate}
        \item (finite options) For any $x \in X$, the number of options $\# \{x' \in X \mid x\rel x'\}$ is finite.
        \item (finite time) There is no infinite path. $x_0 \rel x_1 \rel x_2 \rel \dots$
    \end{enumerate}
\end{definition}
For example, \Cref{fig:FiniteGame} and \Cref{fig:InfiniteGame} are games, but \Cref{fig:LoopNonGame}, \Cref{fig:InfiniteNonGame}, and \Cref{fig:InfiniteOptionNonGame} are not games.

\begin{figure}[ht]
\centering
\begin{tikzpicture}[>=Latex, thick]

  \node[circle, fill=black, inner sep=3pt, label=above:{}] (A) at (0,0) {};
  \node[circle, fill=black, inner sep=3pt, label=above:{}] (B) at (2,0) {};
  \node[circle, fill=black, inner sep=3pt, label=above:{}] (C) at (4,0) {};

  \node[circle, fill=black, inner sep=3pt, label=below:{}] (D) at (0.5,-1.2) {};
  \node[circle, fill=black, inner sep=3pt, label=below:{}] (E) at (2,-1.2) {};
  \node[circle, fill=black, inner sep=3pt, label=below:{}] (F) at (3.5,-1.2) {};

  \node[circle, fill=black, inner sep=3pt, label=below:{}] (G) at (1,-2.4) {};
  \node[circle, fill=black, inner sep=3pt, label=below:{}] (H) at (3,-2.4) {};

  \node[circle, fill=black, inner sep=3pt, label=below:{}] (T1) at (0.5,-3.6) {};
  \node[circle, fill=black, inner sep=3pt, label=below:{}] (T2) at (3.5,-3.6) {};

  \draw[->] (A) -- (D);
  \draw[->] (A) -- (E);
  \draw[->] (B) -- (E);
  \draw[->] (B) -- (F);
  \draw[->] (C) -- (F);

  \draw[->] (D) -- (G);
  \draw[->] (E) -- (G);
  \draw[->] (E) -- (H);
  \draw[->] (F) -- (H);

  \draw[->] (G) -- (T1);
  \draw[->] (G) -- (T2);
  \draw[->] (H) -- (T2);

  \draw[->] (A) to[bend right] (G);
  \draw[->] (C) to[bend left] (T2);

\end{tikzpicture}
\caption{An example of a finite game}
\label{fig:FiniteGame}
\end{figure}
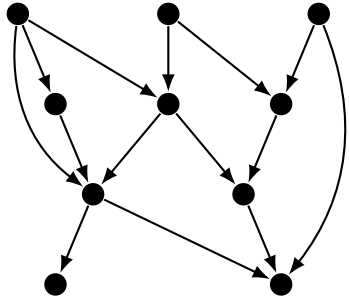

\begin{figure}[ht]
\centering
\begin{tikzpicture}[>=Latex, thick]

  \node[circle, fill=black, inner sep=3pt, opacity=0] (Linf) at (-6.3,0) {};
  \node at (-6.5,0) {$\cdots$};

  \node[circle, fill=black, inner sep=3pt] (L2) at (-4,0) {};
  \node[circle, fill=black, inner sep=3pt] (L1) at (-2,0) {};
  \node[circle, fill=black, inner sep=3pt] (O)  at ( 0,0) {};
  \node[circle, fill=black, inner sep=3pt] (R1) at ( 2,0) {};

  \draw[->] (Linf) -- (L2);
  \draw[->] (L2) -- (L1);
  \draw[->] (L1) -- (O);
  \draw[->] (O)  -- (R1);

\end{tikzpicture}
\caption{An example of an infinite game}
\label{fig:InfiniteGame}
\end{figure}
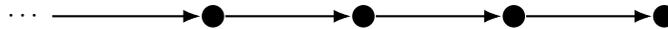

\begin{figure}[ht]
\centering
\begin{tikzpicture}[>=Latex, thick]

  \node[circle, fill=black, inner sep=3pt, label=above:{}] (v) at (0,0) {};

  \draw[->] (v) to[out=45, in=135, looseness=20] (v);


\end{tikzpicture}
\caption{An example of non-game with infinite path}
\label{fig:LoopNonGame}
\end{figure}
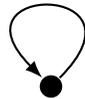

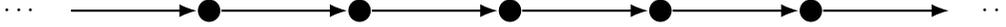
\begin{figure}[ht]
\centering
\begin{tikzpicture}[>=Latex, thick]

  \node[circle, fill=black, inner sep=3pt, opacity=0] (L3) at (-6,0) {};
  \node[circle, fill=black, inner sep=3pt, opacity=0] (R3) at ( 6,0) {};

  \node at (-6.5,0) {$\cdots$};
  \node at ( 6.5,0) {$\cdots$};

  \node[circle, fill=black, inner sep=3pt] (L2) at (-4,0) {};
  \node[circle, fill=black, inner sep=3pt] (L1) at (-2,0) {};
  \node[circle, fill=black, inner sep=3pt] (O)  at ( 0,0) {};
  \node[circle, fill=black, inner sep=3pt] (R1) at ( 2,0) {};
  \node[circle, fill=black, inner sep=3pt] (R2) at ( 4,0) {};

  \draw[->] (L3) -- (L2);
  \draw[->] (L2) -- (L1);
  \draw[->] (L1) -- (O);
  \draw[->] (O)  -- (R1);
  \draw[->] (R1) -- (R2);
  \draw[->] (R2) -- (R3);

\end{tikzpicture}
\caption{Another example of non-game with infinite path}
\label{fig:InfiniteNonGame}
\end{figure}

\begin{figure}[ht]
\centering
\begin{tikzpicture}[>=Latex, thick]

  \node[circle, fill=black, inner sep=3pt, label=above:{}] (U) at (0,1.5) {};

  \node[circle, fill=black, inner sep=3pt, label=below:{}] (V1) at (0,0) {};
  \node[circle, fill=black, inner sep=3pt, label=below:{}] (V2) at (2,0) {};
  \node[circle, fill=black, inner sep=3pt, label=below:{}] (V3) at (4,0) {};
  \node[circle, fill=black, inner sep=3pt, label=below:{}] (V4) at (6,0) {};
  \node at (7,0) {$\cdots$};

  \draw[->] (U) -- (V1);
  \draw[->] (U) -- (V2);
  \draw[->] (U) -- (V3);
  \draw[->] (U) -- (V4);

  \draw[->, dashed] (U) -- (6.8,0.2);

\end{tikzpicture}
\caption{An example of non-game with infinite options}
\label{fig:InfiniteOptionNonGame}
\end{figure}
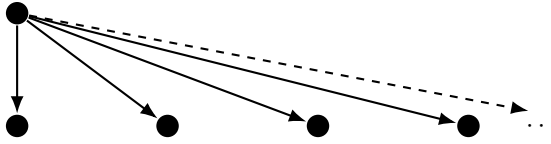

The ultimate goal of combinatorial game theory is to win a given game.
To win a game is almost the same thing as knowing \demph{outcome}, \dq{$N$-states} (Next-player-winning states) or \dq{$P$-states} (Previous-player-winning states), defined as follows. 
\begin{definition}[Outcome]\label{def:outcome}
    For a game $\X=(X, \rel)$ and its state $x\in X$,
    the \demph{outcome} of $x$ is defined by
    \[
    \oc_{\X}(x)\coloneqq
    \begin{cases}
        N& \textrm{(if the next player wins from the state $x$)}\\
        P& \textrm{(if the previous player wins from the state $x$),}
    \end{cases}
    \]
    or equivalently, it is recursively defined by
    \[
    \oc_{\X}(x)\coloneqq
    \begin{cases}
        N& \textrm{(if there exists $x\rel x'$ with $\oc_{\X}(x')=P$)}\\
        P& \textrm{(otherwise)}
    \end{cases}
    \]
\end{definition}
The latter recursive definition is not circuler, thanks to the \dq{finite time} condition.

\begin{remark}[How can we win with outcome?]
    For those who are not familiar with combinatorial game theory, let us clarify the connection between an actual ways to win a game $\X=(X, {\rel})$ and the outcome function $\oc\colon X \to \{N,P\}$. Suppose it is your turn and the current position is an $N$-state. Then \demph{the winning strategy is simply to always move to a $P$-state.} By the definition of $P$-states, your opponent cannot reply with another $P$-state. The opponent must either move to an $N$-state, or lose immediately. In the latter case, by the definition of $N$-states, you can move to a $P$-state again. Due to the finite time condition, this procedure must terminate, and therefore you are guaranteed to win.
\end{remark}

\begin{remark}[Why do we assume the \dq{finite options} condition?]\label{rem:finiteOptions}
    While we do not assume the finiteness of the underlying set, we include the \dq{finite options} condition in \Cref{def:game}. This kind of \dq{local finiteness} condition is a difference with \cite{bavsic2024categories} and a similarity with \cite{de2024profiniteness}. We assume it not only because many games of interest satisfy the condition, but also we need it in the following sections. Let us explain some (of many) reasons why we need it. One game theoretic reason is that it is necessary to define the Grundy number while dealing with games with infinite states. (More precisely, we need it to keep Grundy number to be finite ordinal.) A categorical reason is that the considered functor $\Pf\colon \Set \to \Set$ is finitary (\cite[Example 2.5, Example 3.18]{adamek2007recursive}) and hence our category of games admits a lot of pleasant properties (\Cref{app:CategoricalPropertiesOfGames}). In particular, thanks to the finite option condition, we have the terminal game (\Cref{prop:TerminalGameAndInitialAlgebraAreHereditarilyFiniteSets}), which plays the central role in our characterization of nim-sum.
\end{remark}

\begin{example}[Winning strategy of the subtraction nim]\label{exmp:SubtractionNim}Let us give a famous example of the outcome function.
    A {subtraction nim} (for $S=\{1,2,3\}$) is $(\N, {\rel})$, where
    \[
    n\rel m \iff n=m+1, m+2, \textrm{or }m+3.
    \]
    We can (recursively) prove that the outcome function is given by
    \[
    \oc(n)=
    \begin{cases}
        N &(n\not\equiv 0 \mod 4)\\
        P &(n\equiv 0 \mod 4).
    \end{cases}
    \]
    So the winning strategy is to move to the multiples of $4$.
\end{example}

Theoretically, the most important game might be the following game, which we will call the binary exponent nim, or later, the terminal game (\Cref{rem:AckermanInterpretationAndTerminalGame}).
\begin{example}[Binary exponent nim, or the terminal game]\label{exmp:BinaryExponentNimOrTerminalGame}
    The underlying set of \demph{the binary exponent nim} is $\N$, and the relation $n \rel m$ is defined by
    \[n \rel m \iff 2^m \textrm{ appears in the binary expansion of }n.\]
    For example, when $n=10000$, since
    \[
    n=10000=2^{13}+2^{10}+2^{9}+2^{8}+2^{4},
    \]
    there are $5$ possible moves, namely $10000\rel4,8,9,10,13$.
    The state $n=10000$ is $N$-state, and the winning way it to move to $8$ or $10$ (\Cref{fig:BinaryExponentNim}).
    \end{example}

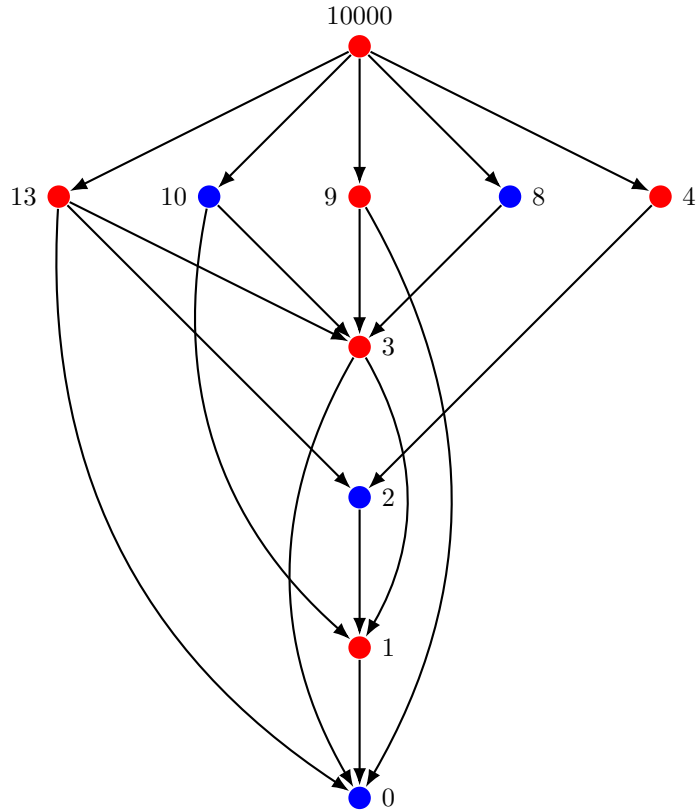
\begin{figure}[ht]
\centering
\begin{tikzpicture}[>=Latex, thick, node distance=12mm and 14mm]

  \node[circle, fill=red, inner sep=3pt, label=above:{10000}] (N10000) at (0,1) {};

  \node[circle, fill=red, inner sep=3pt, label=left:{13}] (N13) at (-4,-1) {};
  \node[circle, fill=blue, inner sep=3pt, label=left:{10}] (N10) at (-2,-1) {};
  \node[circle, fill=red, inner sep=3pt, label=left:{9}]  (N9)  at ( 0,-1) {};
  \node[circle, fill=blue, inner sep=3pt, label=right:{8}]  (N8)  at ( 2,-1) {};
  \node[circle, fill=red, inner sep=3pt, label=right:{4}]  (N4)  at ( 4,-1) {};

  \node[circle, fill=red, inner sep=3pt, label=right:{3}]  (N3)  at (0,-3) {};
  \node[circle, fill=blue, inner sep=3pt, label=right:{2}]  (N2)  at (0,-5) {};

  \node[circle, fill=red, inner sep=3pt, label=right:{1}]  (N1)  at (0,-7) {};
  \node[circle, fill=blue, inner sep=3pt, label=right:{0}]  (N0)  at (0,-9) {};

  \draw[->] (N10000) -- (N13);
  \draw[->] (N10000) -- (N10);
  \draw[->] (N10000) -- (N9);
  \draw[->] (N10000) -- (N8);
  \draw[->] (N10000) -- (N4);

  \draw[->] (N13) -- (N3);
  \draw[->] (N13) -- (N2);
  \draw[->] (N13) to[bend right] (N0);

  \draw[->] (N10) -- (N3);
  \draw[->] (N10) to[bend right] (N1);

  \draw[->] (N9) -- (N3);
  \draw[->] (N9) to[bend left] (N0);

  \draw[->] (N8) -- (N3);

  \draw[->] (N4) -- (N2);

  \draw[->] (N3) to[bend left] (N1);
  \draw[->] (N3) to[bend right] (N0);

  \draw[->] (N2) -- (N1);

  \draw[->] (N1) -- (N0);

\end{tikzpicture}
\caption{The binary exponent nim, below $10000$, with {\color{blue} $P$-states} and {\color{red} $N$-states}.}
\label{fig:BinaryExponentNim}
\end{figure}

\begin{example}[\dq{Effeuiller la marguerite}]\label{exmp:EffeuillerLaMarguerite}
    Let us consider another game whose underlying set is $\N$. This time, we define $n\rel m$ by \[n\rel m \iff n=m+1.\] In each phase, players have at most one option.
    Obviously, the outcome of a state $n$ of this boring game is determined only by the parity of $n$; a state $n$ is a $P$-state if and only if $n$ is even.
    We will write $\ElM$ for this game since this game is conventionally called \dq{effeuiller la marguerite.}
    Althogh this game $\ElM=(\N, -1)$ is practically quite boring, it sometimes plays a theoretical role (\Cref{exmp:RemotenessAsBoutonMonoid}, \Cref{exmpl:remotenessAndEffeuillerLaMarguerite}).
\end{example}

We conclude this subsection by expressing Bouton's theorem in our formulation (\Cref{def:game}) as follows.
\begin{example}[Nim]\label{exmp:nimAsGraph}
    In our formulation, the $n$-heap nim $\Nim{n}$ is $(\N^{n}, \rel)$, where $(a_1, \dots a_n)\rel (b_1, \dots b_n)$ is defined by
    \[
    (a_1, \dots a_n)\rel (b_1, \dots b_n) \iff \textrm{there exists $1\leq i \leq n$ such that $a_i > b_i$ and for any $j \neq i$, $a_j = b_j$}
    \]
\end{example}
\begin{theorem}[Bouton's theorem \cite{bouton1901nim}]\label{thm:Bouton}
    A state $(a_1,\dots , a_n)$ of the $n$-heap nim $\Nim{n}$ is $P$-state if and only if $a_1 \nsum \cdots \nsum a_n=0$.  
\end{theorem}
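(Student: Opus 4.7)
The plan is to prove the theorem by well-founded induction on states of $\Nim{n}$, using the finite-time condition to validate the induction (equivalently, by induction on the sum $a_1 + \cdots + a_n$). The proof splits into two closure properties of the set $Z = \{(a_1, \ldots, a_n) \in \N^n \mid a_1 \nsum \cdots \nsum a_n = 0\}$, matching the two clauses of the recursive definition of $\oc$.

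First I would establish the following two lemmas about nim-sum and the move relation:
\begin{enumerate}
    \item[(A)] \textbf{Zero-sum states have no zero-sum successor:} If $(a_1,\ldots,a_n)\in Z$ and $(a_1,\ldots,a_n)\rel(b_1,\ldots,b_n)$, then $(b_1,\ldots,b_n)\notin Z$. A single move changes exactly one coordinate, say $a_i\neq b_i$, while leaving the others fixed. Hence the new nim-sum equals $a_i\nsum b_i\neq 0$.
    \item[(B)] \textbf{Nonzero-sum states admit a zero-sum successor:} If $s\coloneqq a_1\nsum\cdots\nsum a_n\neq 0$, let $k$ be the index of the highest bit of $s$. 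Some $a_i$ has bit $k$ set; take $b_i\coloneqq a_i\nsum s$. Flipping bit $k$ from $1$ to $0$ (and possibly flipping lower bits) yields $b_i<a_i$, so $(a_1,\ldots,a_n)\rel(a_1,\ldots,b_i,\ldots,a_n)$, and the new nim-sum is $s\nsum s=0$.
\end{enumerate}

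Next, I would run the induction. By the finite-time condition of \Cref{def:game}, the relation $\rel$ is well-founded on $\N^n$, so the recursive definition of $\oc$ in \Cref{def:outcome} determines a unique function. I then prove by induction on $\rel$ that $\oc_{\Nim{n}}(a_1,\ldots,a_n)=P \iff (a_1,\ldots,a_n)\in Z$. In the inductive step: if the state is in $Z$, every successor is outside $Z$ by (A), hence every successor is an $N$-state by the induction hypothesis, so the state is a $P$-state; if the state is outside $Z$, by (B) it has a successor in $Z$, which is a $P$-state by the induction hypothesis, so the state is an $N$-state. The base case $(0,\ldots,0)$ has no moves and nim-sum $0$, consistent with being a $P$-state.

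The substantive combinatorial content lies entirely in (B), specifically in the trick of XORing the full nim-sum into the heap containing the highest bit. Everything else is structural: (A) is a one-line observation, and the outer induction only uses the well-foundedness guaranteed by \Cref{def:game}. There is no real obstacle; the only thing to be careful about is to phrase the induction over the well-founded relation $\rel$ (rather than, say, over a single coordinate), so that the argument works uniformly for all $(a_1,\ldots,a_n)$ and does not require finiteness of the underlying set in any other way.
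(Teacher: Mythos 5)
Your proof is correct: lemmas (A) and (B) are the standard closure properties of the zero-nim-sum set, and the well-founded induction over $\rel$ (licensed by the finite-time condition of \Cref{def:game}) correctly converts them into the statement about $\oc_{\Nim{n}}$. This is the classical direct argument, and it is genuinely different from the route the paper takes: the paper states the theorem with a citation and never proves it from scratch, but rather recovers it in \Cref{exmp:AnalysisOfNim} from the general machinery — $\Nim{n}$ is the Conway sum of $n$ copies of $\Nim{1}$ (\Cref{exmp:DecompositionOfNim}), $\G_{\Nim{1}}=\id_{\N}$ (cf. \Cref{app:GrundyNumberisLeftAdjointToNim}), the nim-sum rule $\G_{\X+\Y}(x,y)=\G_{\X}(x)\nsum\G_{\Y}(y)$ (\Cref{thm:GeneralizedBoutonTheoremNimSumRule}, itself quoted from the literature), and $\oc_{\X}(x)=P\iff\G_{\X}(x)=0$ (\Cref{prop:GrundynumberIsMoreInformativeThanOutcome}). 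Your approach is self-contained and more elementary: its only combinatorial content is step (B), XORing the full nim-sum into a heap carrying its highest bit, but it is specific to Nim and to the outcome function. The paper's route buys generality — essentially the same trick, repackaged as the mex/nim-sum identity (the \dq{Rota-Baxter} equation discussed in \Cref{ssec:missingPiece}), computes the Grundy value of an arbitrary Conway sum, with Bouton's theorem falling out as a special case — at the price of importing the nim-sum rule rather than proving it. Both arguments are valid; yours would serve as a direct proof of the stated theorem, the paper's as a corollary of its broader framework.
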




\subsection{Divide difficulties with the Conway addition and Grundy number}
In the last subsection, we have seen that once you know the outcome of a given game, you know the winning strategy. But how can we effectively calculate the outcome? (For example, how can we know that the outcome of a nim state $(a_1, \dots , a_n)$ is $P$ if and only if $a_1 \nsum \dots \nsum a_n =0$?)

One promising strategy is to divide the difficulty into smaller parts!
What we will do in this subsection (and the whole \Cref{sec:NimSumTypeTheoremSchema}) is to decompose a game into smaller parts and synthesize the properties of smaller games:
\begin{itemize}
    \item We want to know the outcome of a game $\X$.
    \item We devide the game $\X$ into a \demph{Conway sum} (\Cref{def:ConwayAddition}) of smaller games $\X = \Y + \Z$.
    \item We calculate the \demph{Grundy numbers} (\Cref{def:GrundyNumber}) of the games $\Y,\Z$.
    \item We calculate the {Grundy numbers} of $\X$ from those of $\Y,\Z$ (\Cref{thm:GeneralizedBoutonTheoremNimSumRule}).
    \item We calculate the outcome of $\X$ from the Grundy number of $\X$ (\Cref{prop:GrundynumberIsMoreInformativeThanOutcome}).
\end{itemize}
Let us clarify again that all the content in this subsection is well-known in combinatorial game theory. For more details, see standard textbooks including \cite{siegel2013combinatorial}.

\invmemo{In the previous subsection, we have seen the nim-sum $\nsum$ provides a winning strategy of the nim game. But which part of the nim game rule made us to consider nim-sum? The answer is classically known in combinatorial game theory, as the generalized nim-sum rule for \demph{Conway addition} and \demph{Grundy number}. (See \cite{siegel2013combinatorial} for more detail.) \invmemo{Can't catch the meaning}}

First, we recall the notion of the \demph{Conway addition} of games, which is visualized in \Cref{fig:ConwayAddition}.
\begin{definition}[Conway addition of games]\label{def:ConwayAddition}
For two games $\X = (X,\rel_{\X}), \Y = (Y,\rel_{\Y})$, their \demph{Conway sum} (or just sum) $\X + \Y$ is the game $(X\times Y, \rel_{\X + \Y})$, where $(x,y) \rel_{\X + \Y} (x',y')$ if and only if 
$(x \rel_{\X} x' \land y=y')$ or $(x=x' \land y \rel_{\Y}y')$.
\end{definition}

\begin{figure}[ht]
\centering
\begin{tikzpicture}[>=Latex, thick]

  \newcommand{\slantedgrid}[5]{%
    \def\dx{0.8}   
    \def\dy{-0.8}  
    \def\ex{-0.8}  
    \def\ey{-0.8}  
    \foreach \i in {0,...,#1}{%
      \foreach \j in {0,...,#2}{%
        \coordinate (#5-\i-\j) at (#3+\i*\dx+\j*\ex, #4+\i*\dy+\j*\ey);
        \node[circle, fill=black, inner sep=2pt] at (#5-\i-\j) {};
      }%
    }%
    \ifnum#1>0
      \pgfmathtruncatemacro{\imax}{#1-1}%
      \foreach \i in {0,...,\imax}{%
        \foreach \j in {0,...,#2}{%
          \draw[->] (#5-\i-\j) -- (#5-\the\numexpr\i+1\relax-\j);
        }%
      }%
    \fi
    \ifnum#2>0
      \pgfmathtruncatemacro{\jmax}{#2-1}%
      \foreach \i in {0,...,#1}{%
        \foreach \j in {0,...,\jmax}{%
          \draw[->] (#5-\i-\j) -- (#5-\i-\the\numexpr\j+1\relax);
        }%
      }%
    \fi
  }

  \slantedgrid{3}{0}{-8}{0}{X}
  \node at (-7,-4) {$\X$};

  \node at (-4.3,-1) {$+$};

  \slantedgrid{0}{4}{0}{0.5}{Y}
  \node at (-1.5,-4) {$\Y$};

  \node at (2.3,-1) {$=$};

  \slantedgrid{3}{4}{7}{2}{Z}
  \node at (6.5,-4) {$\X + \Y$};

\end{tikzpicture}
\caption{An example of Conway addition.}
\label{fig:ConwayAddition}
\end{figure}

\begin{remark}[Terminology and notation: Why is it called \dq{sum}?]
    The Conway sum is conventionally called \dq{sum} and denoted by $\X + \Y$, while some readers might think \dq{product} is a better name. In fact, the corresponding monoidal structure in graph theory is called the box \dq{product} (see, for example, \cite{kapulkin2024closed}, which is also related to \Cref{ssec:ClassificationProblemOfMonoidalStructures}). But there is a decent reason to call it \dq{sum} in game theory, related to the surreal number \cite{conway2000numbers}.
\end{remark}

A typical usage of the Conway addition is to decompose a complicated game into smaller games. The prototypical example is the following decomposition of nim.
\begin{example}[Decomposition of Nim]\label{exmp:DecompositionOfNim}
    The $n$-heaps nim $\Nim{n}$ is the Conway sum of $n$-copies of ($1$-heap) nim games.\[\Nim{n} = \Nim{1} + \dots + \Nim{1}\]
\end{example}

We want to utilize the Conway addition $+$ to calculate the outcome of a state $(x,y)\in \X+ \Y$. However, even if you know the outcomes $\oc_\X(x)$ and $\oc_\Y(y)$, it is generally impossible to calculate the outcome of the sum state $\oc_{\X+ \Y}(x,y)$. Thus we need to enrich the outcome into the Grundy number. As a preparation, we recall the notion of \textit{mex}, which stands for \demph{m}inimum \demph{ex}cluded value.
\begin{definition}[mex]\label{def:mex}
    For a finite set of natural numbers $S\subset \N$, its \demph{mex}, denoted by $\mex(S)\in \N$, is the minimum natural number that does not belong to the subset $S$. In other words, mex of $S$ is the minimum element of the complement of $S$:
\[\mex(S) = \min S^{\mathrm{c}}.\]
\end{definition}
Notice that the complement $S^c$ is always non-empty since the set $S$ is assumed to be finite (cf. \Cref{rem:finiteOptions}).

\begin{definition}[Grundy number]\label{def:GrundyNumber}
    For a game $\X = (X, \rel)$ and a state $x\in X$, its \demph{Grundy number} $\G_{\X}(x)$ is recursively defined by
    \begin{equation}\label{eq:GrundyNumber}
        \G_{\X}(x)\coloneqq \mex(\{\G_{\X}(x')\mid x \rel x'\}).
    \end{equation}
\end{definition}
This recursive definition does work due to the two finiteness conditions in the definition of games (\Cref{def:game}). See \Cref{fig:RecursioveCalculationOfGrundyNumber} for an example of a recursive calculation of the Grundy numbers.
\begin{figure}[ht]
\centering
\begin{tikzpicture}[>=Latex, thick, scale=0.9]

  \tikzset{edge/.style={->, draw=black!35}}
  \newcommand{\grundypanel}[4]{%
    \begin{scope}[xshift=#1cm, yshift=#2cm]
      \def\stage{#3}%

      \node[circle, inner sep=3pt,
            fill={\ifnum\stage>0 black\else black!20\fi},
            label=left:{\ifnum\stage>0 $0$\fi}] (#4Z1) at (-2,-7.2) {};
      \node[circle, inner sep=3pt,
            fill={\ifnum\stage>0 black\else black!20\fi},
            label=left:{\ifnum\stage>0 $0$\fi}] (#4Z2) at ( 0,-7.2) {};
      \node[circle, inner sep=3pt,
            fill={\ifnum\stage>0 black\else black!20\fi},
            label=left:{\ifnum\stage>0 $0$\fi}] (#4Z3) at ( 2,-7.2) {};

      \node[circle, inner sep=3pt,
            fill={\ifnum\stage>1 black\else black!20\fi},
            label=left:{\ifnum\stage>1 $1$\fi}] (#4Y1) at (-2,-5.8) {};
      \node[circle, inner sep=3pt,
            fill={\ifnum\stage>1 black\else black!20\fi},
            label=left:{\ifnum\stage>1 $1$\fi}] (#4Y2) at ( 0,-5.8) {};
      \node[circle, inner sep=3pt,
            fill={\ifnum\stage>1 black\else black!20\fi},
            label=left:{\ifnum\stage>1 $1$\fi}] (#4Y3) at ( 2,-5.8) {};

      \node[circle, inner sep=3pt,
            fill={\ifnum\stage>2 black\else black!20\fi},
            label=left:{\ifnum\stage>2 $0$\fi}] (#4X1) at (-2,-4.4) {};
      \node[circle, inner sep=3pt,
            fill={\ifnum\stage>2 black\else black!20\fi},
            label=left:{\ifnum\stage>2 $2$\fi}] (#4X2) at ( 0,-4.4) {};
      \node[circle, inner sep=3pt,
            fill={\ifnum\stage>2 black\else black!20\fi},
            label=left:{\ifnum\stage>2 $0$\fi}] (#4X3) at ( 2,-4.4) {};

      \node[circle, inner sep=3pt,
            fill={\ifnum\stage>3 black\else black!20\fi},
            label=left:{\ifnum\stage>3 $1$\fi}] (#4W1) at (-2,-3.0) {};
      \node[circle, inner sep=3pt,
            fill={\ifnum\stage>2 black\else black!20\fi},
            label=left:{\ifnum\stage>2 $2$\fi}] (#4W3) at ( 0,-3.0) {};
      \node[circle, inner sep=3pt,
            fill={\ifnum\stage>3 black\else black!20\fi},
            label=right:{\ifnum\stage>3 $1$\fi}] (#4W2) at ( 2,-3.0) {};

      \node[circle, inner sep=3pt,
            fill={\ifnum\stage>4 black\else black!20\fi},
            label=left:{\ifnum\stage>4 $0$\fi}] (#4V1) at (-1,-1.6) {};
      \node[circle, inner sep=3pt,
            fill={\ifnum\stage>4 black\else black!20\fi},
            label=left:{\ifnum\stage>4 $3$\fi}] (#4V2) at ( 1,-1.6) {};

      \draw[edge, {\ifnum\stage>1 black\else black!20\fi}] (#4Y1) -- (#4Z1);
      \draw[edge, {\ifnum\stage>1 black\else black!20\fi}] (#4Y2) -- (#4Z2); \draw[edge, {\ifnum\stage>1 black\else black!20\fi}] (#4Y2) -- (#4Z3);
      \draw[edge, {\ifnum\stage>1 black\else black!20\fi}] (#4Y3) -- (#4Z3);

      \draw[edge, {\ifnum\stage>2 black\else black!20\fi}] (#4X1) -- (#4Y1); \draw[edge, {\ifnum\stage>2 black\else black!20\fi}] (#4X1) -- (#4Y2);
      \draw[edge, {\ifnum\stage>2 black\else black!20\fi}] (#4X2) -- (#4Y2); \draw[edge, {\ifnum\stage>2 black\else black!20\fi}] (#4X2) -- (#4Z1);
      \draw[edge, {\ifnum\stage>2 black\else black!20\fi}] (#4X3) -- (#4Y3);

      \draw[edge, {\ifnum\stage>3 black\else black!20\fi}] (#4W1) -- (#4X1); \draw[edge, {\ifnum\stage>3 black\else black!20\fi}] (#4W1) -- (#4X2);
      \draw[edge, {\ifnum\stage>3 black\else black!20\fi}] (#4W2) -- (#4X2); \draw[edge, {\ifnum\stage>3 black\else black!20\fi}] (#4W2) -- (#4X3);
      \draw[edge, {\ifnum\stage>2 black\else black!20\fi}] (#4W3) -- (#4Y1); \draw[edge, {\ifnum\stage>2 black\else black!20\fi}] (#4W3) -- (#4Z3);

      \draw[edge, {\ifnum\stage>4 black\else black!20\fi}] (#4V1) -- (#4W1); \draw[edge, {\ifnum\stage>4 black\else black!20\fi}] (#4V1) -- (#4W2); \draw[edge, {\ifnum\stage>4 black\else black!20\fi}] (#4V1) -- (#4W3);
      \draw[edge, {\ifnum\stage>4 black\else black!20\fi}] (#4V2) -- (#4W2); \draw[edge, {\ifnum\stage>4 black\else black!20\fi}] (#4V2) -- (#4X2); \draw[edge, {\ifnum\stage>4 black\else black!20\fi}] (#4V2) -- (#4X3);

      \node at (0,-8.6) {\small Step~#3};
    \end{scope}%
  }

  \grundypanel{0.0}{0.0}{1}{A}
  \grundypanel{6.0}{0.0}{2}{B}
  \grundypanel{12.0}{0.0}{3}{C}
  \grundypanel{0.0}{-9.0}{4}{D}
  \grundypanel{6.0}{-9.0}{5}{E}

  \begin{scope}[xshift=12cm, yshift=-9cm]
      \node[circle, inner sep=3pt,
            fill=blue] (GZ1) at (-2,-7.2) {};
      \node[circle, inner sep=3pt,
            fill=blue] (GZ2) at ( 0,-7.2) {};
      \node[circle, inner sep=3pt,
            fill=blue] (GZ3) at ( 2,-7.2) {};

      \node[circle, inner sep=3pt,
            fill=red] (GY1) at (-2,-5.8) {};
      \node[circle, inner sep=3pt,
            fill=red] (GY2) at ( 0,-5.8) {};
      \node[circle, inner sep=3pt,
            fill=red] (GY3) at ( 2,-5.8) {};

      \node[circle, inner sep=3pt,
            fill=blue] (GX1) at (-2,-4.4) {};
      \node[circle, inner sep=3pt,
            fill=red] (GX2) at ( 0,-4.4) {};
      \node[circle, inner sep=3pt,
            fill=blue] (GX3) at ( 2,-4.4) {};

      \node[circle, inner sep=3pt,
            fill=red] (GW1) at (-2,-3.0) {};
      \node[circle, inner sep=3pt,
            fill=red] (GW3) at ( 0,-3.0) {};
      \node[circle, inner sep=3pt,
            fill=red] (GW2) at ( 2,-3.0) {};

      \node[circle, inner sep=3pt,
            fill=blue] (GV1) at (-1,-1.6) {};
      \node[circle, inner sep=3pt,
            fill=red] (GV2) at ( 1,-1.6) {};

      \draw[edge, black] (GY1) -- (GZ1);
      \draw[edge, black] (GY2) -- (GZ2); \draw[edge, black] (GY2) -- (GZ3);
      \draw[edge, black] (GY3) -- (GZ3);

      \draw[edge, black] (GX1) -- (GY1); \draw[edge, black] (GX1) -- (GY2);
      \draw[edge, black] (GX2) -- (GY2); \draw[edge, black] (GX2) -- (GZ1);
      \draw[edge, black] (GX3) -- (GY3);

      \draw[edge, black] (GW1) -- (GX1); \draw[edge, black] (GW1) -- (GX2);
      \draw[edge, black] (GW2) -- (GX2); \draw[edge, black] (GW2) -- (GX3);
      \draw[edge, black] (GW3) -- (GY1); \draw[edge, black] (GW3) -- (GZ3);

      \draw[edge, black] (GV1) -- (GW1); \draw[edge, black] (GV1) -- (GW2); \draw[edge, black] (GV1) -- (GW3);
      \draw[edge, black] (GV2) -- (GW2); \draw[edge, black] (GV2) -- (GX2); \draw[edge, black] (GV2) -- (GX3);

      \node at (0,-8.6) {{\color{blue} $P$-states} and {\color{red} $N$-states}};
    \end{scope}%

\end{tikzpicture}
\caption{Recursive calculation of \demph{Grundy numbers} from bottom to top, and the outcome}
\label{fig:RecursioveCalculationOfGrundyNumber}
\end{figure}
The importance of the Grundy number is due to the following proposition.
\begin{proposition}[the Grundy number is more informative than the outcome]\label{prop:GrundynumberIsMoreInformativeThanOutcome}
    For any game $\X=(X, {\rel})$ and any state $x\in X$, its outcome is $P$ if and only if its Grundy number $\G_{\X}(x)$ is $0$:
    \[
    \oc_{\X}(x)=P \iff \G_{\X}(x)=0.
    \]
\end{proposition}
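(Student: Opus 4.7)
The plan is to prove this by well-founded induction using the \dq{finite time} condition from \Cref{def:game}, which guarantees that the relation $\rel$ (read backwards) is well-founded and hence supports proof by induction along the recursive structure that defines both $\oc_\X$ and $\G_\X$.

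The crucial elementary observation about $\mex$ that drives everything is: for any finite $S \subset \N$, one has $\mex(S) = 0$ if and only if $0 \notin S$. Combined with the recursive definition $\G_\X(x) = \mex(\{\G_\X(x') \mid x \rel x'\})$, this gives the equivalence
\[
\G_\X(x) = 0 \iff \text{no successor } x' \text{ of } x \text{ satisfies } \G_\X(x') = 0.
\]
On the outcome side, the recursive definition directly yields
\[
\oc_\X(x) = P \iff \text{no successor } x' \text{ of } x \text{ satisfies } \oc_\X(x') = P.
\]
These two conditions have exactly the same shape, so the induction will glue them together cleanly.

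The induction goes as follows. Fix $x \in X$, and as inductive hypothesis assume the equivalence $\oc_\X(x') = P \iff \G_\X(x') = 0$ has been established for every $x'$ with $x \rel x'$. Then the set $\{x' \mid x \rel x', \oc_\X(x')=P\}$ coincides with $\{x' \mid x \rel x', \G_\X(x') = 0\}$. Using the two displayed equivalences above, $\oc_\X(x)=P$ holds exactly when the former set is empty, while $\G_\X(x)=0$ holds exactly when the latter is empty; hence the two conditions are equivalent at $x$. Well-foundedness of $\rel$, guaranteed by the finite time condition in \Cref{def:game}, lets us conclude the equivalence for all $x$.

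I do not expect any genuine obstacle; the only point that needs a line of care is justifying that both $\oc_\X$ and $\G_\X$ are well-defined functions in the first place (which is the standard invocation of well-founded recursion, also noted in the remark just after \Cref{def:outcome} and \Cref{def:GrundyNumber}), and that $\mex$ is well-defined on the finite set of successor-values, which in turn uses the \dq{finite options} condition of \Cref{def:game}. Once these are taken for granted, the argument is a one-step induction matching the quantifier structure \dq{no successor is bad.}
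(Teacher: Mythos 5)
Your proof is correct and is essentially the paper's own argument: the paper merely asserts that the statement "can easily be proven by induction," and your well-founded induction on the finite-time relation, driven by the observation that $\mex(S)=0 \iff 0\notin S$, is exactly that induction carried out in detail. (The paper also notes a more conceptual alternative via the $\Pf$-algebra homomorphism $\Mex \to \NP$ and \Cref{prop:AlgebraHomPreservesGameValue}, but the inductive route is the one indicated in the proof of the proposition itself.)
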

\begin{proof}
One can easily prove this by induction, although we will see more conceptual proof later (\Cref{prop:AlgebraHomPreservesGameValue}). 
\end{proof}

The reason why we enrich the ouctome to Grundy number is that the Grundy number is compatible with the Conway addition (\Cref{thm:GeneralizedBoutonTheoremNimSumRule})!
Let us recall nim-sum again (\Cref{def:NimSum}).

\begin{definition}[Nim-sum]\label{def:NimSum2}
    The \demph{nim-sum} is the abelian group structure on $\N$, induced by the bijection $\N \to \bigoplus_{k=0}^{\infty} \Z/2\Z$ given by the binary expansion. In other words, the nim-sum is the digit-wise exclusive disjunction (xor) of the binary expansion.
\end{definition}
For example, $7\nsum 5 = (111)_{2} \nsum (101)_{2} = (010)_{2} = 2$.

\begin{theorem}[Nim-sum rule= generalized Bouton's theorem 
{\cite[II. Theorem4.7]{siegel2013combinatorial}}]
\label{thm:GeneralizedBoutonTheoremNimSumRule}
    For any pair of games $\X = (X,\rel_{\X}), \Y = (Y,\rel_{\Y})$ and any pair of states $x\in X$, $y\in Y$, the Grundy number of $(x,y)$ is given by the nim-sum as follows:
    \[\G_{\X+\Y}(x,y)= \G_{\X}(x)\nsum\G_{\Y}(y).\]
\end{theorem}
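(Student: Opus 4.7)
The plan is a standard Sprague–Grundy style induction, carried out by well-founded recursion on the game $\X+\Y$, where the key number-theoretic input is the behaviour of the highest bit under XOR. Before starting, note that $\X+\Y$ really is a game: finiteness of options at $(x,y)$ is immediate, and an infinite path in $\X+\Y$ would, by projecting onto the coordinate where infinitely many moves occur, yield an infinite path in $\X$ or in $\Y$, contradicting \Cref{def:game}. Hence well-founded induction on $\rel_{\X+\Y}$ is available, and the Grundy number $\G_{\X+\Y}(x,y)$ is well-defined.

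Write $a=\G_\X(x)$, $b=\G_\Y(y)$, and $c=a\nsum b$. Arguing by induction on $(x,y)$, we verify the two conditions that force $\G_{\X+\Y}(x,y)=\mex\{\G_{\X+\Y}(x'',y'')\mid (x,y)\rel_{\X+\Y}(x'',y'')\}$ to equal $c$.

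\textbf{No child has Grundy number $c$.} A move $(x,y)\rel_{\X+\Y}(x',y)$ with $x\rel_\X x'$ gives, by induction, $\G_{\X+\Y}(x',y)=\G_\X(x')\nsum b$. By the mex definition of $a=\G_\X(x)$ we have $\G_\X(x')\neq a$, hence $\G_\X(x')\nsum b\neq a\nsum b=c$ by cancellation in the XOR group. The symmetric case $(x,y)\rel_{\X+\Y}(x,y')$ is identical.

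\textbf{Every $k<c$ is realised by some child.} Put $d\coloneqq k\nsum c$; since $k<c$, we have $d\neq 0$, so $d$ has a well-defined highest bit $i$. Because bit $i$ appears in $c=a\nsum b$, it is set in exactly one of $a,b$; say in $a$ (the other case is symmetric). Then $a\nsum d$ differs from $a$ only at bit $i$ and below, with bit $i$ flipped from $1$ to $0$, so $a\nsum d<a$. By the mex definition of $a=\G_\X(x)$, there exists $x'$ with $x\rel_\X x'$ and $\G_\X(x')=a\nsum d$. The induction hypothesis then gives
\[
\G_{\X+\Y}(x',y)=(a\nsum d)\nsum b = (a\nsum b)\nsum d = c\nsum d = k,
\]
so $k$ is attained at the child $(x',y)$.

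Taking mex, $\G_{\X+\Y}(x,y)=c=\G_\X(x)\nsum\G_\Y(y)$, completing the induction. The only non-routine step is the ``every $k<c$ is reachable'' direction, and the trick there is the bit-flipping observation: the XOR group itself does not respect order, but at the highest differing bit it does, which is exactly what lets one pull $\G_\X(x)$ down to a strictly smaller value and invoke mex. Everything else is bookkeeping once the induction is set up on the Noetherian relation $\rel_{\X+\Y}$.
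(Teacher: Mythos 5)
Your proof is correct: the well-foundedness of $\rel_{\X+\Y}$ is properly justified, the ``no child has value $c$'' step is sound by cancellation in the XOR group, and the ``every $k<c$ is realised'' step correctly uses the highest bit of $d=k\nsum c$ (the only tacit point is that this bit must be set in $c$, which follows since $k<c$ forces $c$ to carry a $1$ at the highest differing bit — routine, not a gap). Note, however, that the paper does not prove \Cref{thm:GeneralizedBoutonTheoremNimSumRule} at all: it is imported from the literature (Siegel), so you have supplied the standard Sprague--Grundy induction that the paper deliberately leaves outside its scope. It is worth comparing your argument with the paper's own commentary in \Cref{ssec:missingPiece}: there the author isolates the ``only non-trivial part'' of the classical proof as the identity $\mex(S)\nsum \mex(T) = \mex\bigl(S\nsum \mex(T) \cup \mex(S)\nsum T\bigr)$, and your two verification steps (no option of $(x,y)$ has value $a\nsum b$; every smaller value is attained by moving in the component owning the relevant bit) are precisely a proof of that identity, instantiated along the ``Leibniz rule'' $\str_{\X+\Y}(x,y)=\str_{\X}(x)\times\{y\}\cup\{x\}\times\str_{\Y}(y)$. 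So your route is the concrete, bitwise one that the paper treats as the remaining mystery to be conceptualised; what the induction buys you is a self-contained elementary proof, while the paper's framing buys a separation between the formal recursion scheme (handled by hylomorphisms) and the genuinely number-theoretic bit-flipping fact, which it leaves as the open ``differential structure'' question.
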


Combining \Cref{prop:GrundynumberIsMoreInformativeThanOutcome} and \Cref{thm:GeneralizedBoutonTheoremNimSumRule}, we can calculate $P$-states of a Conway sum and thus its winning strategy. In fact, for a given game $\X$ that can be decomposed into a Conway sum of two games $\X = \Y + \Z$, we have
\[
x=(y,z) \textrm{ is a $P$-state} {\iff} \G_{\Y+\Z}(y,z)=0 \iff \G_{\Y}(y) \nsum \G_{\Z}(z)=0.
\]
\begin{example}[Analysis of nim]\label{exmp:AnalysisOfNim}
    The classical Bouton theorem (\Cref{thm:Bouton}) is the typical example of the above observation.
    As the $n$-heap nim $\Nim{n}$ is the Conway sum of $n$-copies of the $1$-heap nim $\Nim{1}$ (\Cref{exmp:DecompositionOfNim}), in order to win $\Nim{n}$, it suffices to know $\G_{\Nim{1}}\colon \N \to \N$. By the easy induction, we can prove that $\G_{\Nim{1}}= \id_{\N}$ (cf. \Cref{app:GrundyNumberisLeftAdjointToNim}), and thus we obtain
    \[
    \G_{\Nim{n}}(a_1, \dots , a_n)= \G_{\Nim{1}}(a_1) \nsum \dots \nsum \G_{\Nim{1}}(a_n) = a_1 \nsum \dots \nsum a_n.
    \]
    Therefore, \Cref{prop:GrundynumberIsMoreInformativeThanOutcome} implies that a state $(a_1, \dots, a_n)$ is a $P$-state if and only if $a_1 \nsum \dots \nsum a_n=0$, which is exactly what the Bouton theorem states (\Cref{thm:Bouton}). 
\end{example}

\section{Games as recursive coalgebras}\label{sec:GamesAsRecursiveCoalgebras}
In this section and the next section, we will reinterpret the content in \Cref{sec:GamesasGraphs} in terms of recursive coalgebras.
Here is the spoiler:
\begin{table}[ht]
    \centering
    \begin{tabular}{|l|l|l|l|}
        \hline
        \textbf{Abstract Coalgebra Theory} &  &\textbf{Game Theoretic Notions}  &example\\ \hline
        Recursive coalgebra& \cref{ssec:GamesasrecursiveCoalgebras} &Game& $\Nim{}$ \\ \hline
        Coalgebra homomorphism&\cref{ssec:GameMorphismsAsCoalgebraMorphisms}  &Game morphism &\\ \hline
        Algebra&\cref{ssec:GameValuesAsPfAlgebras}  &recursion step of game value& mex\\ \hline
        Algebra homomorphism& \cref{ssec:GameValuesAsPfAlgebras}  &Transformation of game value& \Cref{prop:GrundynumberIsMoreInformativeThanOutcome}\\ \hline
 Coalgebra-algebra morphism& \cref{ssec:GameValuesAsPfAlgebras} & Game value& Grundy number \\\hline
 Terminal recursive coalgebra&\cref{ssec:terminalGame} & Binary exponent nim&\\\hline
 Monoidal structure& \cref{ssec:GameDecompotisionAsMonoidalStructure}& Game decomposition&Conway sum\\\hline
 Bouton monoid&\cref{ssec:SynthesizingWithBoutonMonoid} & &Nim-sum\\\hline
    \end{tabular}
    \caption{Comparison between coalgebra theory and game theory}
    \label{tab:coalgebra_game_theory_comparison}
\end{table}

Before delving into the reinterpretation part, 
we first observe a motivating phenomenon in \Cref{ssec:WhyRecursiveCoalgebra}, and recall the theory of recursive coalgebras in \Cref{ssec:PreliminariesOnCoalgebraicMethod}. 
\subsection{Why recursive coalgebras?}\label{ssec:WhyRecursiveCoalgebra}
Before explaining the categorical abstract nonsense in \Cref{ssec:PreliminariesOnCoalgebraicMethod}, let us observe one phenomenon motivating the latter contents.
The reader can skip this subsection since it does not include any theorems. However, we recommend to read it because this observation is the starting idea of this project\footnote{This observation can be found also in \cite[section 4]{bavsic2024categories}, which does not use coalgebra theory.}.

Our original motivation is to understand the nim-sum (\Cref{def:NimSum2}), and the reason why the nim-sum works is the Nim-sum rule (\Cref{thm:GeneralizedBoutonTheoremNimSumRule}) regarding the Grundy number (\Cref{def:GrundyNumber}). 
So our first task would be to understand the Grundy number categorically.

In the definition of Grundy number, we used a particular type of recursion \[\G_{\X}(x)=\mex(\{\G_{\X}(x')\mid x \rel x'\}) \text{  (\ref{eq:GrundyNumber})}.\]  

Let $\Pf(X)$ denote the set of all finite subset of a given set $X$ (\Cref{not:FinitePowersetFunctorPfin}). Then, the mex function is a function from $\Pf(\N)$ to $\N$: 
\[\mex \colon \Pf(\N) \to \N.\]

Similarly, we can rewrite games as a function between $\Pf(X)$ and $X$, but in the opposite direction!
For any directed graph $(X, {\rel}\subset X\times X)$, the corresponding \demph{neighborhood function} $\str\colon X \to \Pow(X)$ is defined by
\begin{equation}\label{eq:TheCorrespondence}
\str(x)= \{x'\in X\mid x\rel x'\}.
\end{equation}
If a graph $\X=(X, {\rel})$ satisfies the finite option condition in  \Cref{def:game}, in particular, if $\X$ is a game, the codomain $\Pow(X)$ can be reduced to $\Pf(X)$ and we obtain the function
\[
\str\colon X \to \Pf(X).
\]
Thus, we can rephrase the definition of Grundy number (\Cref{def:GrundyNumber} and \Cref{eq:GrundyNumber}) as the unique function $\G_{\X}\colon X \to \N$ that makes the following \dq{twisted} diagram 
    \begin{equation}\label{eq:GrundyNumberDiagram}
        \begin{tikzcd}[column sep = 50pt, row sep= 30pt]
        \Pf(X)\ar[r,"\Pf(\G_{\X})"]&\Pf(\N)\ar[d,"\mex"']\\
        X\ar[u,"\str"]\ar[r,"\G_{\X}"]&\N
        \end{tikzcd}
    \end{equation}
    commutative, where $\Pf(\G_{\X})$ denotes the direct image function! The \dq{finite time} condition ensures the unique existence of such a function $\G_{\X}\colon X \to A$.

    One may find it weird to consider such a twisted diagram. However this reflects a kind of \dq{twistedness} of the computation of the Grundy number;
    the recursive computation of Grundy numbers, as in \Cref{fig:RecursioveCalculationOfGrundyNumber}, is carried out by tracing the game \demph{backwards}. The somewhat unusual kind of \dq{twisted} commutativity in Diagram \ref{eq:GrundyNumberDiagram} corresponds to the nature of this recursive computation that goes against the flow of the game! Such \dq{twisted recursive computations} have a long history in category theory (or categorical computer science) under the name of \demph{recursive coalgebras} (or hylomorphisms), which we will recall in the next subsection.
    
\subsection{Preliminaries on coalgebras and recursive coalgebras}\label{ssec:PreliminariesOnCoalgebraicMethod}
This subsection aims to recall the basic notions in coalgebra theory, in particular, the definition and properties of recursive coalgebras. The reades are assumed to be familiar with the definitions of categories and functors, (and limits for \Cref{ssec:terminalGame}). For example, the first three sections of \cite{Maclane1998CWMcategories} suffice. For general theory and examples of coalgebras, see \cite{jacobs2017introduction}. For recursive coalgebras, see the book \cite{taylor1999practical}, papers including \cite{adamek2020well}, or papers cited therein. 


In order to separate the general theory from our particular context of game theory, we intentionally postpone the motivating examples untill the next subsection. So if the reader feels that it is too abstract, please refer to the next section for our game-theoretic examples.

\subsubsection{Algebras and coalgebras of an endofunctor}\label{ssec:AlgebrasAndCoalgebrasOfEndofunctor}
The definition of coalgebras (and algebras) of an endofunctor is surprisingly simple:
\begin{definition}[Coalgebras and algebras]\label{DefinitionCoalgebra}
    For an endofunctor $T\colon \C \to \C$ on a category $\C$, a \demph{$T$-coalgebra} $\X$ is a pair $\X=(X, \str)$ of an object $X \in \ob(\C)$ and a morphism $\str \colon X \to TX$. Dually, a \demph{$T$-algebra} $\A$ is a pair $\A=(A, \alpha)$ of an object $A \in \ob(\C)$ and a morphism $\alpha \colon TA \to A$.
\end{definition}

\begin{definition}[Coalgebra homomorphism and algebra homomorphism]
\label{DefinitionCoalgebraMorphism}
    For an endofunctor $T\colon \C \to \C$ on a category $\C$, a \demph{homomorphism of $T$-coalgebras} from $(X, \str)$ to $(X',\str')$ is a morphism $f\colon X \to X'$ in $\C$ such that the diagram
    \[
    \begin{tikzcd}
        TX\ar[r,"Tf"]&TX'\\
        X\ar[r,"f"]\ar[u,"\str"]&X'\ar[u,"\str'"]
    \end{tikzcd}
    \]
    commutes. Homomorohisms between $T$-algebras $(A, \alpha), (A', \alpha')$ are defined in the dual way:
    \[
    \begin{tikzcd}
        TA\ar[r,"Tf"]\ar[d,"\alpha"]&TA'\ar[d,"\alpha'"]\\
        A\ar[r,"f"]&A'.
    \end{tikzcd}
    \]
\end{definition}

\begin{notation}\label{not:AlgCoalgCategortOfCoAlgebras}
    For an endofunctor $T\colon \C \to \C$ on a category $\C$, the category of $T$-coalgebras and $T$-coalgebra homomorphisms is denoted by $\Coalg{T}$. The category of $T$-algebras and $T$-algebra homomorphisms is denoted by $\Alg{T}$.
\end{notation}

\subsubsection{Coalgebra-algebra morphisms and Recursive coalgebras}\label{ssec:RecursiveCoalgebras}
We will recall the notion of recursive coalgebra, due to \cite{osius1974categorical} and \cite{taylor1999practical}.
\begin{definition}
    For an endofunctor $T\colon \C \to \C$ on a category $\C$, a \demph{coalgebra-algebra morphism} from a $T$-coalgebra $\X=(X, \str)$  to  a $T$-algebra $\A=(A,\alpha)$ is a morphism $f\colon X\to A$ in the category $\C$ such that the diagram
    \[
    \begin{tikzcd}
        TX\ar[r,"Tf"]&TA\ar[d,"\alpha"']\\
        X\ar[u,"\str"]\ar[r,"f"]&A
    \end{tikzcd}
    \]
    commutes.
\end{definition}

\begin{definition}[Recursive coalgebra]\label{def:recursiveCoalgebras}
    A $T$-coalgebra $\X=(X,\str)$ is said to be \demph{recursive} if, for any $T$-alegebra $\A=(A,\alpha)$, there exists a unique coalgebra-algebra morphism from $\X$ to $\A$. The full subcategory of $\Coalg{T}$ that consists of all recursive $T$-coalgebras is denoted by $\RecCoalg{T}$.

    For a recursive coalgebra $\X=(X, \str)$ and a $T$-algebra $\A=(A, \alpha)$, the unique coalgebra-algebra morphism $X\to A$ is called the \demph{hylomorphism} and denoted by $\hylo_{\A, \X}\colon X \to A$.
\end{definition}

Let us emphasize the following \dq{stability} of hylomorphisms by compositions, which is theoretically almost trivial, but it provides a seemingly non-trivial consequences including \Cref{prop:AlgebraProvideGameValues}, \Cref{cor:gameMorphismsPreserveGrundyOutcomeAndOthers}, and \Cref{prop:AlgebraHomPreservesGameValue}, which generalizes \Cref{prop:GrundynumberIsMoreInformativeThanOutcome}.
\begin{lemma}[Stability of hylomorphisms by composition]\label{lem:StabilityOfHylomorphisms}
    Let $T\colon \C \to \C$ be an endofunctor on a category $\C$, $\X=(X, \str)$ be a recursive $T$-coalgebra, $\A=(A, \alpha)$ be a $T$-algebra, and $\hylo_{\A,\X}\colon X \to A$ be the hylomorphism between them. 
    \begin{itemize}
        \item For any recurisive $T$-coalgebra $\X'=(X',\str')$ and any $T$-coalgebra homomorphism $f\colon \X'\to \X$, the hylomorphism $\hylo_{\A,\X'}\colon X' \to A$ is given by $\hylo_{\A,\X'} = \hylo_{\A, \X}\circ f$.
        \[
        \begin{tikzcd}[row sep=5pt]
            X'\ar[rr, "\hylo_{\A,\X'}"]\ar[rd, "f"']&&A\\
            &X\ar[ru, "\hylo_{\A,\X}"']&
        \end{tikzcd}
        \]
        \item For any $T$-algebra $\A'=(A',\alpha')$ and any $T$-algebra homomorphism $g\colon \A\to \A'$, the hylomorphism $\hylo_{\A',\X}\colon X \to A'$ is given by $\hylo_{\A',\X} = g \circ \hylo_{\A, \X}$.
        \[
        \begin{tikzcd}[row sep=5pt]
            X\ar[rr, "\hylo_{\A',\X}"]\ar[rd, "\hylo_{\A,\X}"']&&A'\\
            &A\ar[ru, "g"']&
        \end{tikzcd}
        \]
    \end{itemize}
\end{lemma}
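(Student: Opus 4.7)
The plan is to exploit the defining universal property of a recursive coalgebra: for any $T$-algebra, there is a \emph{unique} coalgebra-algebra morphism out of a given recursive $T$-coalgebra. So for each of the two bullets, it suffices to verify that the stated composite on the right-hand side is itself a coalgebra-algebra morphism of the appropriate type; uniqueness then forces the desired equation with the hylomorphism on the left-hand side. Both parts are thus essentially routine diagram chases, and the argument has the same shape with the roles of ``coalgebra side'' and ``algebra side'' swapped.

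For the first bullet, I would check that $\hylo_{\A,\X}\circ f\colon X'\to A$ is a coalgebra-algebra morphism from $\X'=(X',\str')$ to $\A=(A,\alpha)$. To do this, I would paste two commutative squares: the coalgebra homomorphism square for $f\colon \X'\to\X$, giving $Tf\circ\str'=\str\circ f$, on top of the coalgebra-algebra square for $\hylo_{\A,\X}$, giving $\alpha\circ T\hylo_{\A,\X}\circ\str=\hylo_{\A,\X}$. Applying functoriality of $T$ in the form $T(\hylo_{\A,\X}\circ f)=T\hylo_{\A,\X}\circ Tf$, the pasted rectangle is exactly the coalgebra-algebra condition for the composite. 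Since $\X'$ is recursive, uniqueness of the coalgebra-algebra morphism $\X'\to\A$ then yields $\hylo_{\A,\X'}=\hylo_{\A,\X}\circ f$.

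Dually, for the second bullet, I would check that $g\circ\hylo_{\A,\X}\colon X\to A'$ is a coalgebra-algebra morphism from $\X$ to $\A'=(A',\alpha')$, by pasting the coalgebra-algebra square for $\hylo_{\A,\X}$ to the algebra homomorphism square for $g\colon \A\to\A'$, which gives $\alpha'\circ Tg=g\circ\alpha$. Functoriality of $T$ again lets me split $T(g\circ\hylo_{\A,\X})=Tg\circ T\hylo_{\A,\X}$, and the pasted rectangle is the required condition. Since $\X$ is recursive, uniqueness of the coalgebra-algebra morphism $\X\to\A'$ gives $\hylo_{\A',\X}=g\circ\hylo_{\A,\X}$.

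I do not expect a genuine obstacle here, as the author has already signaled that the result is ``theoretically almost trivial''. The only care required is bookkeeping of arrow directions (structure maps $\str$ point out of the underlying object while $\alpha$ points into it) and invoking functoriality of $T$ when decomposing $T$ applied to a composite. I would therefore expect the written-out proof to occupy just a few lines consisting of two pasted commutative diagrams plus an appeal to uniqueness in \Cref{def:recursiveCoalgebras}.
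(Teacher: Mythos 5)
Your proposal is correct and matches the paper's argument: the paper's proof is exactly the pasted commutative rectangle (coalgebra homomorphism square for $f$, coalgebra-algebra square for $\hylo_{\A,\X}$, algebra homomorphism square for $g$) together with the implicit appeal to uniqueness from recursiveness. The only cosmetic difference is that the paper displays both bullets in a single composite diagram rather than two separate pastings.
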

\begin{proof}
    The commutative diagmram
    \[
    \begin{tikzcd}[column sep=30pt]
        TX'\ar[r, "Tf"]&TX\ar[r, "T(\hylo_{\A,\X})"]&TA\ar[r, "Tg"]\ar[d, "\alpha"]&TA'\ar[d, "\alpha'"]\\
        X'\ar[r, "f"]\ar[u, "\str'"]&X\ar[u, "\str"]\ar[r, "\hylo_{\A,\X}"]&A\ar[r, "g"]&A'
    \end{tikzcd}
    \]
    completes the proof.
\end{proof}

\invmemo{
\begin{remark}[In which category is a coalgebra-algebra morphism actually a morphism?]
    It is just a profunctor. We can consider its collage/cograph. The above proposition is just a general phenomenon for profunctor.
\end{remark}
}

As we are interested in the terminal object of $\RecCoalg{T}$, let us recall the related result from \cite{lambek1968fixpoint} and \cite{capretta2006recursive}.
\begin{proposition}[Initial algebra is the terminal recursive coalgebra]\label{prop:TerminalRecursiveCoalgebraIsInitialAlgebra}
    For an endofunctor $T\colon \C \to \C$ on a category $\C$, if a $T$-algebra $\A=(A,\alpha)$ is the initial $T$-algebra, then
    \begin{itemize}
        \item the structure map $\alpha \colon TA \to A$ is an isomorphism \cite[Lemma 2.2]{lambek1968fixpoint}, and
        \item the coalgebra $(A, \alpha^{-1}\colon A \to TA)$ is the terminal recursive coalgebra \cite[Proposition 2]{capretta2006recursive}.
    \end{itemize}
\end{proposition}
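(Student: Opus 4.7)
The plan is to handle the two bullets separately, both by straightforward manipulation of the defining commutative squares, ultimately reducing each claim to a universal property already in hand.

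For the first bullet (Lambek's lemma), I would apply the endofunctor $T$ to the initial algebra structure $\alpha\colon TA\to A$ to obtain a new $T$-algebra $(TA, T\alpha)$. By initiality of $(A,\alpha)$, there is a unique algebra homomorphism $h\colon A\to TA$, meaning $h\circ \alpha = T\alpha \circ Th$. Then $\alpha\circ h\colon A\to A$ is an endomorphism of the initial algebra, so by uniqueness $\alpha\circ h = \id_A$. Applying $T$ gives $T\alpha\circ Th = \id_{TA}$, and combining with the homomorphism identity for $h$ yields $h\circ\alpha = \id_{TA}$. Hence $h = \alpha^{-1}$.

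For the second bullet, the key observation is a simple equivalence of commuting squares that drives both the recursiveness of $(A,\alpha^{-1})$ and its terminality. Given any $T$-algebra $(B,\beta)$, a morphism $f\colon A\to B$ is a coalgebra-algebra morphism from $(A,\alpha^{-1})$ to $(B,\beta)$ iff $f = \beta\circ Tf\circ \alpha^{-1}$, which (by composing with $\alpha$ on the right) is equivalent to $f\circ\alpha = \beta\circ Tf$, i.e., $f$ is a $T$-algebra homomorphism from $(A,\alpha)$ to $(B,\beta)$. By initiality of $(A,\alpha)$, such an $f$ exists uniquely, which establishes that $(A,\alpha^{-1})$ is recursive. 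Symmetrically, for any $T$-coalgebra $(X,\str)$, a morphism $g\colon X\to A$ is a coalgebra homomorphism $(X,\str)\to (A,\alpha^{-1})$ iff $\alpha^{-1}\circ g = Tg\circ\str$, which (by composing with $\alpha$ on the left) is equivalent to $g = \alpha\circ Tg\circ\str$, i.e., $g$ is a coalgebra-algebra morphism from $(X,\str)$ into the algebra $(A,\alpha)$.

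Using this second equivalence, terminality follows immediately: if $(X,\str)$ is a recursive $T$-coalgebra, then the hylomorphism $\hylo_{(A,\alpha),(X,\str)}\colon X\to A$ exists and is unique as a coalgebra-algebra morphism, hence also exists and is unique as a coalgebra homomorphism $(X,\str)\to (A,\alpha^{-1})$. This shows $(A,\alpha^{-1})$ is terminal in $\RecCoalg{T}$. There is no real obstacle here; the only thing to be careful about is bookkeeping the direction of the structure maps (coalgebra versus algebra) and verifying that composing with $\alpha$ really is invertible on both sides thanks to Lambek's lemma proved in the first bullet.
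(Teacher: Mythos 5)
Your proof is correct and is exactly the standard diagram-chase argument the paper alludes to (it simply cites Lambek's lemma and Capretta et al.\ and says "easily checked by diagram chases"): the Lambek argument via the algebra $(TA,T\alpha)$, and the observation that composing with $\alpha$ turns coalgebra-algebra morphisms out of $(A,\alpha^{-1})$ into algebra homomorphisms out of $(A,\alpha)$, and coalgebra homomorphisms into $(A,\alpha^{-1})$ into coalgebra-algebra morphisms into $(A,\alpha)$. Nothing is missing; the details you supply are precisely what the paper leaves to the reader.
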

\begin{proof}
Both are easily checked by diagram chases.
    The first assertion is known as Lambek's lemma. 
\end{proof}

\invmemo{The inverse direction is \cite[][Proposition 7]{capretta2006recursive}}

\begin{remark}[Relationship with well-founded coalgebras]
Actually, the recursive coalgebras we will consider (\Cref{ssec:GamesasrecursiveCoalgebras}) are also characterized as \textit{well-founed coalgebras}, which are known to be inherently related to recursice coalgebras. 
For their relationships, see \cite{taylor1999practical} and also a recent study \cite{adamek2020well}.
\end{remark}

\subsection{Games as recursive \texorpdfstring{$\Pf$}{Pf}-coalgebras}\label{ssec:GamesasrecursiveCoalgebras}
In the rest of \Cref{sec:GamesAsRecursiveCoalgebras}, we will specialize the general theory of recursive coalgebra (\Cref{ssec:PreliminariesOnCoalgebraicMethod}) to the case where the endofunctor $T\colon \C \to \C$ is the
the \demph{finite powerset functor}.
\begin{notation}[finite powerset functor]\label{not:FinitePowersetFunctorPfin}In this paper,
\begin{itemize}
    \item $\Pow\colon \Set \to \Set$ denotes the covariant \demph{powerset functor} that sends a set to its powerset and a function to its direct image function, and
    \item $\Pf:\Set \to \Set$ denotes \demph{the finite powerset functor}, which is the subfunctor of $\Pow:\Set \to \Set$ such that 
    $\Pf(X) = \{S \subset X\mid \# S < \infty\}$.
\end{itemize}
\end{notation}


In terms of coalgebras, any graph $\X=(X, {\rel})$ is associated with the corresponding $\Pow$-coalgebra \[\str\colon X \to \Pow(X)\] by \Cref{eq:TheCorrespondence}. The finite option condition in \Cref{def:game} is precisely saying that the codomain $\Pow(X)$ can be reduced to $\Pf(X)$. The next proposition states that the finite time condition is precicely the recursiveness (\Cref{def:recursiveCoalgebras}) and characterize games as the graphs that admits the \dq{twisted recursion} (Diagram \ref{eq:GrundyNumberDiagram}).
\begin{proposition}[Games as recursive coalgebras]\label{prop:GamesFiniteTimeIsRecursivenessAndGamesAsRecursiveCoalgebras}
    For any graph $\X=(X, {\rel})$ with the finite option condition (\Cref{def:game}), the corresponding $\Pf$-coalgebra
    \[
    \str\colon X \to \Pf(X)
    \]
    is a recursive coalgebra if and only if $\X$ is a game. In particular, for any set $X$, \Cref{eq:TheCorrespondence} provides a one-to-one correspondence between the game structures on $X$ and the recursive $\Pf$-coalgebra structures on $X$.
\end{proposition}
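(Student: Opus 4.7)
The plan is to establish the one-to-one correspondence in two halves. First, note that the assignment $\str(x) = \{x' \mid x \rel x'\}$ of \Cref{eq:TheCorrespondence} already gives a bijection between graph structures $(X,\rel)$ on $X$ satisfying the finite option condition and functions $\str \colon X \to \Pf(X)$, i.e.\ $\Pf$-coalgebra structures on $X$. So the content of the proposition reduces to showing, under this bijection, that the finite time condition is equivalent to recursiveness in the sense of \Cref{def:recursiveCoalgebras}.

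For the direction \textbf{(finite time $\Rightarrow$ recursive)}, I will use well-founded recursion. The finite time condition says exactly that the relation $\prec$ on $X$ defined by $x' \prec x \iff x \rel x'$ is well-founded (no infinite descending chain is an infinite path). Given any $\Pf$-algebra $\A = (A, \alpha)$, the standard well-founded recursion principle produces a unique function $f \colon X \to A$ satisfying
\[
f(x) = \alpha\bigl(\{f(x') \mid x \rel x'\}\bigr),
\]
where the finite option condition ensures that $\{f(x') \mid x \rel x'\}$ is indeed an element of $\Pf(A)$ so that $\alpha$ can be applied. This equation is precisely the coalgebra-algebra compatibility, so $f = \hylo_{\A,\X}$ exists and is unique.

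For the direction \textbf{(recursive $\Rightarrow$ finite time)}, I will argue by contraposition: assuming an infinite path $x_0 \rel x_1 \rel x_2 \rel \cdots$ exists, I will exhibit a $\Pf$-algebra with two distinct coalgebra-algebra morphisms from $\X$. Take $A = \{0,1\}$ with the algebra structure $\alpha \colon \Pf(A) \to A$ given by $\alpha(S) = 1$ iff $1 \in S$ (so $\alpha(\emptyset) = \alpha(\{0\}) = 0$). The constant map $g \equiv 0$ is trivially a coalgebra-algebra morphism. For a competing morphism, let $P \subseteq X$ be the set of states from which an infinite path exists, and define $f(x) = 1$ iff $x \in P$. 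One checks: if $x \in P$, then some successor $x'$ lies in $P$, so $1 \in \{f(x') \mid x \rel x'\}$ and the equation $f(x) = \alpha(\dots)$ holds; if $x \notin P$, no successor is in $P$ (otherwise we could prepend $x$ to an infinite path), so the image set is $\subseteq \{0\}$ and the equation again holds. Since $x_0 \in P$, we have $f(x_0) = 1 \neq 0 = g(x_0)$, contradicting recursiveness.

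The only real subtlety is locating the right separating algebra in the second direction; once $(\{0,1\}, \alpha)$ is chosen, both verifications are immediate diagram chases using the definition of $\alpha$ and of $P$. Combining the two directions yields both the equivalence and, via \Cref{eq:TheCorrespondence}, the claimed bijection between game structures and recursive $\Pf$-coalgebra structures on $X$.
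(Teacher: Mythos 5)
Your proof is correct and follows essentially the same route as the paper: well-founded recursion for the forward direction, and for the converse the same two-element separating $\Pf$-algebra (yours is the paper's $(\{\top,\bot\},\alpha)$ up to relabeling $\top\leftrightarrow 0$, $\bot\leftrightarrow 1$), with your constant map and infinite-path indicator corresponding exactly to the paper's $\True_{\X}$ and $\FinTime_{\X}$. The only cosmetic difference is that you phrase the converse as a contraposition, while the paper argues directly that recursiveness forces $\FinTime_{\X}=\True_{\X}$.
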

\begin{proof}
Let $\X=(X, {\rel})$ be a graph with the finite option condition. If the graph is a game, i.e., satisfies the finite time condition, we can recursively construct the unique coalgebra-algebra morphism to any $\Pf$-algebra, and hence we can prove that it is a recursive coalgebra. 

Conversely, take an arbitrary recursive coalgebra $\X= (X,\str)$.
We prove that the corresponding graph $(X, \rel)$ satisfies the finite time condition.
Consider a $\Pf$-algebra $\A=(\{\top, \bot\},\alpha)$ defined by
\[
\alpha(S) \coloneqq
\begin{cases}
    \top &( \bot \notin S)\\
    \bot &( \bot \in S).
\end{cases}
\]
On the one hand, the function $\FinTime_{\X}\colon X \to \{\top, \bot\}$ defined by
\[
\FinTime_{\X}(x) \coloneqq 
\begin{cases}
    \top &(\textrm{any path starting from $x$ terminates in finite steps})\\
    \bot & (\textrm{otherwise})
\end{cases}
\]
is an coalgebra-algebra morphism from $\X$ to $\A$. On the other hand, the constant function to $\top$, which is denoted by $\True_{\X}\colon X \to \{\top, \bot\}$, is also a coalgebra-algebra morphism from $\X$ to $\A$. 
Therefore, the assumption that $\X$ is recursive implies that $\FinTime_{\X}= \True_{\X}$, i.e., $\X$ satisfies the finite time condition.
    \revmemo{cite, general recursion schema?}
\end{proof}

\begin{example}[Nim as a recursive coalgebra]\label{exmp:NimCoalgebra}
    Let $(\N, \nu\colon \N \to \Pf(\N))$ denote the 
    $\Pf$-coalgebra corresponding to the ($1$-heap) nim game $\Nim{1}$ (\Cref{exmp:nimAsGraph}). This function $\nu\colon \N \to \Pf(\N)$ is given by 
    \[
    \mu(n)= \{0,1, \dots, n-1\},
    \]
    which looks like the von Neumann set-theoretic definition of natural numbers. This analogy becomes more precise in \Cref{exmp:vonNeumann}.
\end{example}

\subsection{Game morphisms as \texorpdfstring{$\Pf$}{Pf}-coalgebra homomorphismsm}\label{ssec:GameMorphismsAsCoalgebraMorphisms}
    As games are recursive coalgebras, it is reasonable to define \demph{game morphisms} as coalgebra homomorphisms. Notice that being a coalgebra homomorphism is much stronger than just being a graph morphism!
    For games $\X=(X, {\rel_\X})$ and $\Y=(Y, {\rel_\Y})$ and a function $f\colon X \to Y$, being a graph morphism $x_0\rel x_1 \implies f(x_0) \rel f(x_1)$ is equivalent to saying that $f(\str_{\X}(x))\subset \str_{\Y}(f(x))$, which is strictly weaker than being a coalgebra homomorphism $f(\str_{\X}(x))= \str_{\Y}(f(x))$. In order to ensure the opposite inclusion $f(\str_{\X}(x))\supset \str_{\Y}(f(x))$, we need to assume a kind of \dq{path-lifting property} as follows.
    \[\textrm{Being a game morphism }\iff
    \begin{tikzcd}[row sep = 20pt, column sep=40pt]
        \Pf(X)\ar[r,"\Pf(f)"]&\Pf(Y)\\
        &\\
        X\ar[uu, "\str_{\X}"]\ar[r,"f"]\ar[ruu, phantom, "\rotatebox{-45}{$\subset$}"]&Y\ar[uu, "\str_{\Y}"]\\
        {}\ar[r,"\textrm{(1) Graph morph.}", phantom ]&{}
    \end{tikzcd}
    \textrm{ and }
    \begin{tikzcd}[row sep = 20pt, column sep=40pt]
        \Pf(X)\ar[r,"\Pf(f)"]&\Pf(Y)\\
        &\\
        X\ar[uu, "\str_{\X}"]\ar[r,"f"]\ar[ruu, phantom, "\rotatebox{-45}{$\supset$}"]&Y\ar[uu, "\str_{\Y}"]\\
        {}\ar[r,"\textrm{(2) Path-lift.}", phantom ]&{}
    \end{tikzcd}
    \]
    
    \begin{definition}[Game morphism]\label{def:GameMorphism}
        A \demph{game morphism} from a game $\X = (X, \rel_{\X})$ to another game $\Y = (Y, \rel_{\Y})$ is 
        a function $f\colon X \to Y$ satsifying the following two conditions:
        \begin{description}
            \item[(1) Graph morphism] For any $x,x'\in X$, $x\rel_{\X} x'$ implies $f(x) \rel_{\Y} f(x')$. \label{ConditionGraphpreserving}
            \item[(2) Path-lifting] For any $x\in X$ and $y\in Y$, if $f(x) \rel_{\Y} y$, then there exists $x' \in \X$ such that $x\rel_{\X}x'$ and $f(x')= y$. (see \Cref{fig:PathLiftingProperty})\label{conditionLocallySurjective}
        \end{description}
    \end{definition}

    \begin{notation}\label{not:CategoryOfGames}
    The category of games and game morphisms is denoted by $\Gs$, and the canonical forgetful functor $\X=(X, {\rel}) \mapsto X$ is denoted by $U \colon \Gs \to \Set$.
\end{notation}

Summarizing the content of the previous and the current subsections, we arrive at the following so-called \dq{theorem,} which is in effect little more than a systematic restatement of the definitions introduced so far.
\begin{theorem}[Games as recursive coalgebras]\label{thm:main1:GamesAsRecursiveCoalgebras}
    The category of games is equivalent\footnote{In fact, they are isomorphic.} to the category of recursive $\Pf$-coalgebras.
    \[
    \Gs\simeq \RecCoalg{\Pf}
    \]
\end{theorem}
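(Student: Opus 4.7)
The plan is to leverage Proposition~\ref{prop:GamesFiniteTimeIsRecursivenessAndGamesAsRecursiveCoalgebras}, which already establishes a bijection between game structures on a fixed set $X$ and recursive $\Pf$-coalgebra structures on $X$ via equation~(\ref{eq:TheCorrespondence}). It therefore remains only to extend this object-level bijection to an identity-on-underlying-functions isomorphism of categories; that is, to check that game morphisms between $\X$ and $\Y$ (Definition~\ref{def:GameMorphism}) coincide with $\Pf$-coalgebra homomorphisms between the associated coalgebras $(X,\str_{\X})$ and $(Y,\str_{\Y})$ (Definition~\ref{DefinitionCoalgebraMorphism}). I would define a functor $F\colon \Gs \to \RecCoalg{\Pf}$ by $\X\mapsto (X,\str_{\X})$ and $f\mapsto f$, together with an inverse assignment in the opposite direction; functoriality (preservation of identities and composition) is automatic because both categories inherit composition and identities from $\Set$.

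The core verification is the pointwise unpacking of the coalgebra-homomorphism square. Given a function $f\colon X\to Y$ between the underlying sets of two games, the equation $\Pf(f)\circ\str_{\X} = \str_{\Y}\circ f$ expands at each $x\in X$ to the equality of finite subsets
\[
\{f(x')\mid x\rel_{\X} x'\} \;=\; \{y\in Y\mid f(x)\rel_{\Y} y\}.
\]
The inclusion $\subseteq$ is precisely the graph-morphism condition~(1), and the inclusion $\supseteq$ is precisely the path-lifting condition~(2). Hence $F$ is well-defined and fully faithful on hom-sets, and combined with Proposition~\ref{prop:GamesFiniteTimeIsRecursivenessAndGamesAsRecursiveCoalgebras} it is a bijection on objects; this upgrades the stated equivalence to an outright isomorphism of categories, as the footnote anticipates.

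There is no real obstacle here — the theorem is essentially a careful rearrangement of the definitions that were assembled in the preceding two subsections. The conceptual point worth emphasizing in the write-up is the dual role of the two game-morphism conditions: condition~(1) governs the forward inclusion of the direct-image equation, while condition~(2) governs its converse, so neither clause alone suffices to produce a coalgebra homomorphism. From the coalgebraic vantage point this makes the somewhat unusual path-lifting requirement of Definition~\ref{def:GameMorphism} look inevitable rather than ad hoc.
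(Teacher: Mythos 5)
Your proposal is correct and matches the paper's own argument: the paper also cites Proposition~\ref{prop:GamesFiniteTimeIsRecursivenessAndGamesAsRecursiveCoalgebras} for the object-level correspondence and the discussion preceding Definition~\ref{def:GameMorphism}, where the coalgebra-homomorphism square is split into the inclusion $\subseteq$ (graph morphism) and $\supseteq$ (path-lifting), exactly as you do. Nothing is missing; your pointwise verification simply spells out what the paper leaves as "the argument above Definition~\ref{def:GameMorphism}."
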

\begin{proof}

This follows from \Cref{prop:GamesFiniteTimeIsRecursivenessAndGamesAsRecursiveCoalgebras} and the argument above \Cref{def:GameMorphism}.
\end{proof}



Some readers might find this \dq{path-lifting} condition unnatural and hard to accept. On the one hand, some game theorists might think that it is a \dq{category-theory-biased} condition with little connection to game theoretic phenomena. On the other hand, some category theorists might worry that the path-lifting property could break pleasant properties of the category of graphs. I believe these concerns will be resolved in the rest of the paper, but let us close this subsection with a brief explanation of why the \dq{path-lifting property} is reasonable.

From a game theoretic perspective, the reason why we need the path-lifting property is simply because graph homomorphisms do not preserve game-theoretic data, such as the outcome and the Grundy number. For example, \Cref{fig:PathLiftingProperty} illustrates two graph morphisms: the left-hand one satisfies the path-lifting property and preserves the outcome, while the right-hand one does not. In fact, we will prove that all game morphisms preserve all \dq{recursively defined} game data, such as the outcome and the Grundy number (\Cref{prop:AlgebraProvideGameValues} and \Cref{cor:gameMorphismsPreserveGrundyOutcomeAndOthers}).

\begin{figure}[ht]
\centering
\begin{tikzpicture}[>=Latex, thick, scale=1.0]

  \begin{scope}[xshift=0cm]
    \node[circle, fill=red, inner sep=2pt] (LtopA) at (0,1.0) {};
    \node[circle, fill=blue, inner sep=2pt] (LbotA) at (0,-0.5) {};
    \draw[->] (LtopA) -- (LbotA);
  \end{scope}

  \begin{scope}[xshift=4cm]
    \node[circle, fill=blue, inner sep=2pt] (RtopA) at (0,1.5) {};
    \node[circle, fill=red, inner sep=2pt] (RmidA) at (0,0) {};
    \node[circle, fill=blue, inner sep=2pt] (RbotA) at (0,-1.5) {};
    \draw[->] (RtopA) -- (RmidA);
    \draw[->] (RmidA) -- (RbotA);
  \end{scope}

  \draw[dashed,->,draw=black!50] (LtopA) -- (RmidA);
  \draw[dashed,->,draw=black!50] (LbotA) -- (RbotA);

  \node[scale=2, black] at (2,-2.5) {\ding{51}};

  \begin{scope}[xshift=9cm]
    \node[circle, fill=red, inner sep=2pt] (LtopB) at (0,1.0) {};
    \node[circle, fill=blue, inner sep=2pt] (LbotB) at (0,-0.5) {};
    \draw[->] (LtopB) -- (LbotB);
  \end{scope}

  \begin{scope}[xshift=13cm]
    \node[circle, fill=blue, inner sep=2pt] (RtopB) at (0,1.5) {};
    \node[circle, fill=red, inner sep=2pt] (RmidB) at (0,0) {};
    \node[circle, fill=blue, inner sep=2pt] (RbotB) at (0,-1.5) {};
    \draw[->] (RtopB) -- (RmidB);
    \draw[->] (RmidB) -- (RbotB);
  \end{scope}

  \draw[dashed,->,draw=black!50] (LtopB) -- (RtopB);
  \draw[dashed,->,draw=black!50] (LbotB) -- (RmidB);

  \node[scale=2, black] at (11,-2.5) {\ding{55}};

\end{tikzpicture}
\caption{A game morphism (left side) and a graph morphism that does not satisfy the path-lifting property (right side) with {\color{blue} $P$-states} and {\color{red} $N$-states}}
\label{fig:PathLiftingProperty}
\end{figure}
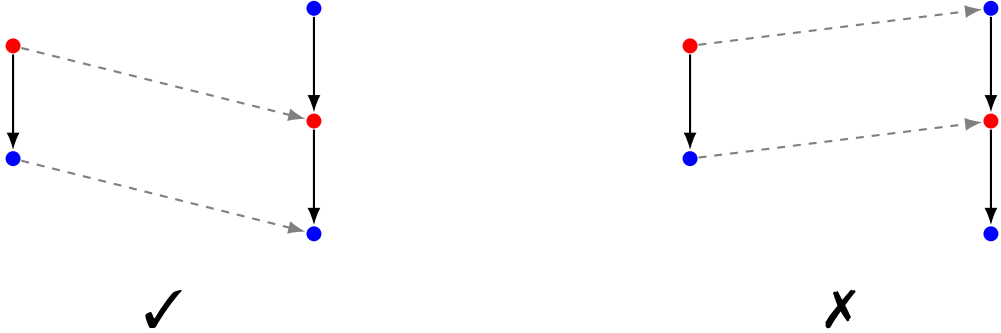

From the categorical point of view,
the category of games $\Gs$ behaves surprisingly nice. Investigating the categorical properties of the category of games is not the main topic of this paper, but here I will list some facts.
\begin{theorem}[Categorical properties of the category of games]\label{thm:CategoricalPropertiesOfGames}
The category of games $\Gs$ has the following properties:
\begin{itemize}
    \item The category of games $\Gs$ is locally finitely presentable. In particular, it has all small limits and colimits.
    \item The forgetful functor $U\colon \Gs \to \Set$ is comonadic. In particular, 
    it preserves, reflects, and strictly creates all small colimits.
    \item  The category of games $\Gs$ 
    has a subobject classifier.
\end{itemize}
\end{theorem}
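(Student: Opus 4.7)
The plan rests on the equivalence $\Gs \simeq \RecCoalg{\Pf}$ from \Cref{thm:main1:GamesAsRecursiveCoalgebras}, combined with the fact that $\Pf \colon \Set \to \Set$ is a finitary endofunctor on a locally finitely presentable category.

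For the first bullet, I would invoke the classical result that $\Coalg{T}$ is locally finitely presentable whenever $T$ is a finitary accessible endofunctor on a locally finitely presentable category, applied to $T = \Pf$. To transfer this to the full subcategory $\RecCoalg{\Pf}$, I would identify recursive $\Pf$-coalgebras with \emph{well-founded} ones, an equivalence valid since $\Pf$ preserves inverse images (cf. \Cref{ssec:PreliminariesOnCoalgebraicMethod}). The key technical step is then to exhibit a finitary coreflection that carves out the well-founded part of an arbitrary $\Pf$-coalgebra; closure under colimits in $\Coalg{\Pf}$ together with this coreflection delivers all small limits and colimits in $\Gs$, and local finite presentability follows.

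For the second bullet, I would apply the dual of Beck's monadicity theorem. A right adjoint $R$ to $U$ is obtained by composing the cofree $\Pf$-coalgebra functor (which exists since $\Pf$ is accessible) with the well-founded coreflection from the previous step. That $U$ reflects isomorphisms is direct: if the underlying function $f$ of a game morphism is bijective, then the two conditions in \Cref{def:GameMorphism} force $f^{-1}$ to also be a game morphism. Strict creation of equalizers of $U$-split pairs reduces to the standard coalgebraic diagram chase, using that $\Pf$ preserves the relevant equalizers in $\Set$ and that such equalizers inherit well-foundedness.

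For the third bullet, I would construct an explicit object $\Omega \in \Gs$ whose states encode truth values recording, recursively along play, whether a given state stays inside a chosen subgame forever. A parallel construction for the larger category of locally finite Kripke frames appears in \cite{de2024profiniteness}, and the task is to adapt it so that $\Omega$ itself satisfies both the finite-options and finite-time conditions. The argument then reduces to characterizing monomorphisms in $\Gs$ (injective underlying maps whose image is closed under the path-lifting property) and verifying the classifying pullback square. The step I anticipate as the main obstacle throughout is showing that the restriction from $\Coalg{\Pf}$ to the recursive subcategory $\RecCoalg{\Pf}$ preserves these pleasant properties; once the appropriate finitary coreflection is in hand, the remaining verifications follow by standard techniques together with the adaptations of the Kripke-frame arguments from \cite{de2024profiniteness}.
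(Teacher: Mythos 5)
Your overall architecture (the coreflective embedding $\Gs\simeq\RecCoalg{\Pf}\hookrightarrow\Coalg{\Pf}$ with the well-founded-part coreflection, a right adjoint to $U$ built from cofree coalgebras, a Beck-type comonadicity argument, and an explicit $\Omega$ inside a game of truth-value-labeled states) is close in spirit to the paper's appendix, but the first bullet has a genuine gap exactly at the step you flag as the ``main obstacle.'' Transferring local finite presentability from $\Coalg{\Pf}$ to the coreflective subcategory $\RecCoalg{\Pf}$ requires more than the existence of the well-founded coreflection: you would need it to be finitary (or at least accessible), which you never establish, and even an accessible coreflector only yields local $\kappa$-presentability for some $\kappa$, not local \emph{finite} presentability, without identifying the finitely presentable objects. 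The paper does not take this route at all: it proves that $U$ strictly creates all small colimits (\Cref{lem:CocompletenessAndCreationOfColimitsConservative}, using the well-founded-part coreflection only for cocompleteness and conservativity), and then verifies the LFP definition directly --- finitely generated subgames are finite (\Cref{lem:FinitelygeneratedSubgameisFinite}), every game is a filtered colimit of its finite subgames (\Cref{lem:GamesAreLocallyFinite}), and a game is finitely presentable iff finite (\Cref{lem:FinitePresentabilityOfGames}, whose proof is the real work, using images, generated subgames, and the path-lifting property). Your proposal supplies no substitute for that last lemma.

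For the second bullet your route is a legitimate alternative: the paper gets the right adjoint abstractly from the adjoint functor theorem for locally presentable categories and then applies the dual reflexive tripleability theorem, computing equalizers of coreflexive pairs explicitly (\Cref{lem:equalizer}, \Cref{prop:Comonadicity}); your plan via precise Beck (split equalizers are absolute, hence created from $\Coalg{\Pf}$, and the equalizer subcoalgebra is a subgame, hence recursive) and the composite right adjoint (cofree $\Pf$-coalgebra followed by the well-founded part) is sound and arguably more economical, granted the coreflection you already invoke. For the third bullet, your idea is essentially the paper's construction ($\Omega$ is the subgame of truth-closed elements of the cofree game $\H_{\{\top,\bot\}}$ on two labels, \Cref{prop:SubobjectClassifier}), but two points need care: there is no parallel subobject-classifier construction in \cite{de2024profiniteness} to adapt (the paper credits that reference only for morphisms, subframes, colimits, equalizers, local presentability, and comonadicity), and the classification argument rests on the nontrivial fact that monomorphisms in $\Gs$ are exactly the injective morphisms, i.e.\ subgame inclusions (\Cref{prop:SubgameAndSubobjectAndMono}), whose proof requires constructing an auxiliary game and is not the routine verification your sketch suggests.
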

\begin{proof}
    The proofs are postponed until \Cref{app:CategoricalPropertiesOfGames}. 
\end{proof}

\begin{remark}[Game is not just a graph with properties.]\label{rem:NotGraphStructureAndProperties}
In \Cref{def:game}, we defined the notion of games as if they were just graphs with some properties. However, that way of speaking is not accurate in the spirit of the categorical distinction between stuff, structure, and properties, since
the canonical forgetful functor 
\[
\Gs \to \Graphs
\]
is not fully faithful. 
(see \cite[][2.4. Stuﬀ, structure, and properties]{baez2009lectures} for more explanation on the categorical distinction.) This fact is also mentioned in \cite[Section 5]{bavsic2024categories}.
In \cite{adamek2005introduction}, Ad\'{a}mek also points out that to regard $\Pf$-coalgebras as graphs is \dq{not a reasonable point of view} as follows:
    \begin{quote} {\cite[][Example 2.7.]{adamek2005introduction}}
        Sometimes one also identifies $Q$ with a finitely branching directed graph [...]
        However, this is often not a reasonable point of view because the coalgebra homomorphisms are much stronger than graph homomorphisms [...]
    \end{quote}
    Therefore, defining a game as a recursive coalgebra does not mean reducing it to graph theory. 
    Rather, it extracts the structure --- the \dq{neighborhood function} $\str$ --- from the underlying graph, and declares that this is the game-theoretic structure.
\end{remark}

\subsection{Game values as \texorpdfstring{$\Pf$}{Pf}-algebras}\label{ssec:GameValuesAsPfAlgebras}
In this subsection, we explain how game values, such as outcome and the Grundy number, are understood in our categorical framework.
First, we introduce the most na\"{i}ve notion of the game value: the quantity preserved by all game morphisms.
\begin{definition}\label{def:gameValueAsCocone}
    A \demph{game value} is a cocone under the forgetful functor $U \colon \Gs \to \Set$. In other words, a game value $\val$ is a set $A$ equipped with a (proper class size) family of functions from all games
    \[
    \{\val_{\X}\colon X\to A\}_{\X=(X, \str)\text{: game}}
    \]
    that is preserved by any game morphism $f\colon \X \to \Y$ in the sense that $\val_{\X}=\val_\Y \circ f$.
    \[
\begin{tikzcd}[row sep = 5pt]
    \X\ar[rd,"\val_\X"]\ar[dd, "f"']&\\
    &A\\
    \Y\ar[ru, "\val_{\Y}"']&
\end{tikzcd}
\]
\end{definition}

Thus, we obtain the following proposition as a special case of the abstract nonsense.
\begin{proposition}\label{prop:AlgebraProvideGameValues}
    For any $\Pf$-algebra $\A=(A, \alpha)$, the family of hylomorphisms \[\hylo_{\A}=\{\hylo_{\A,\X}\colon X\to A\}_{\X=(X, \str)\textrm{: game}}\] is a game value.
\end{proposition}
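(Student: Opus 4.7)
The plan is to deduce this essentially directly from the two inputs already in hand: \Cref{thm:main1:GamesAsRecursiveCoalgebras}, which identifies game morphisms with $\Pf$-coalgebra homomorphisms between recursive $\Pf$-coalgebras, and the first bullet of \Cref{lem:StabilityOfHylomorphisms}, which says that hylomorphisms into a fixed algebra are stable under precomposition with coalgebra homomorphisms.

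Concretely, I would proceed as follows. First, unfold \Cref{def:gameValueAsCocone}: I need to exhibit a set (namely the carrier $A$ of the $\Pf$-algebra $\A$), a family of functions indexed by games (namely $\hylo_{\A,\X}\colon X\to A$ for each game $\X$, which exists and is unique by \Cref{def:recursiveCoalgebras} together with \Cref{prop:GamesFiniteTimeIsRecursivenessAndGamesAsRecursiveCoalgebras}), and check the cocone condition $\val_{\X} = \val_{\Y}\circ f$ for every game morphism $f\colon \X\to \Y$. Second, given such a game morphism, invoke \Cref{thm:main1:GamesAsRecursiveCoalgebras} to reinterpret $f$ as a $\Pf$-coalgebra homomorphism between the recursive $\Pf$-coalgebras underlying $\X$ and $\Y$. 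Third, apply the first bullet of \Cref{lem:StabilityOfHylomorphisms} with $\X'=\X$, $\X$ replaced by $\Y$, and the given $f$, which yields exactly the desired equality
\[
\hylo_{\A,\X} \;=\; \hylo_{\A,\Y}\circ f.
\]
This is the required cocone condition, so $\hylo_{\A}$ is a game value.

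There is essentially no obstacle here: the content of the statement is entirely packaged in \Cref{lem:StabilityOfHylomorphisms} and the identification $\Gs \simeq \RecCoalg{\Pf}$. The only thing worth being careful about is that the family is indexed by a proper class of games (as the definition allows), but this poses no issue because each $\hylo_{\A,\X}$ is uniquely determined objectwise by the universal property of the recursive coalgebra $\X$, so no choice or coherence beyond what \Cref{lem:StabilityOfHylomorphisms} already provides is required.
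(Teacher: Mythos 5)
Your proposal is correct and matches the paper's proof, which likewise observes that the statement is exactly a special case of the first half of \Cref{lem:StabilityOfHylomorphisms} applied to game morphisms viewed as $\Pf$-coalgebra homomorphisms via \Cref{thm:main1:GamesAsRecursiveCoalgebras}. Your write-up just spells out the same one-line argument in more detail.
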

\begin{proof}
    This is just a special case of the first half of \Cref{lem:StabilityOfHylomorphisms}.
\end{proof}

Let us see some examples of $\Pf$-algebras and the induced game values. See \Cref{table:GameValuesAndAlgebras} for the correspondence list.
\begin{table}[ht]
    \centering
    \begin{tabular}{|c|c|} \hline 
         \textbf{Game value}&  $\Pf$-algebra\\ \hline 
         Outcome&  $\NP=(\{N,P\}, \np)$\\ \hline 
         Grundy number&  $\Mex=(\N,\mex)$\\\hline
 Ended states&$\Empty=(\{\top,\bot\},\emptyFunc)$\\\hline 
 Birthday& $\Xem=(\N,\xem)$\\\hline
 Misere outcome&$\MNP=(\N, \mnp)$\\\hline
 Remoteness&$(A, \alpha)$ in \Cref{exmp:remoteness}\\ \hline
    \end{tabular}
    \caption{Game values and the inducing $\Pf$-algebras}
    \label{table:GameValuesAndAlgebras}
\end{table}
First, let us observe that the outcome (\Cref{def:outcome}) and the Grundy number (\Cref{def:GrundyNumber}) are game values induced by $\Pf$-algebras.
\begin{example}[Outcome]
    The outcome (\Cref{def:outcome}) $\oc=\{\oc_\X\colon X \to \{N,P\}\}_{\X=(X, \str)\textrm{: game}}$ is induced by the $\Pf$-algebra $\NP=(\{N,P\}, \np)$, where
    \[
    \np\colon \Pf(\{N,P\})\to \{N,P\}\colon S \mapsto 
    \begin{cases}
        P &(P \notin S)\\
        N &(P\in S).
    \end{cases}
    \]
    So we have
    \[
    \oc=\hylo_{\NP}.
    \]
\end{example}

\begin{example}[Grundy number]
    The Grundy number $\G =\{\G_{\X}\colon X \to \N\}$ (\Cref{def:GrundyNumber}) is induced by the $\Pf$-algebra $\Mex=(\N, \mex)$ (\Cref{def:mex}).
    \[
    \G= \hylo_{\Mex}
    \]
\end{example}

Other important game values in combinatorial game theory are also induced by $\Pf$-algebras.

\begin{example}[Ended states]
Let us start with an easy example. For each state $x\in X$ of a game $\X=(X, {\rel})$, 
we define $\End_{\X}\colon X \to \{\top, \bot\}$ by 
\[\End_{\X}(x)=\top \iff \textrm{the game ends at $x$}\iff \str_{\X}(x)=\emptyset.\]
This game value is induced by the $\Pf$-algebra $\Empty=(\{\top, \bot\}, \emptyFunc)$

\[
     \emptyFunc \colon \Pf(\{\top,\bot\})\to \{\top,\bot\}\colon S \mapsto 
    \begin{cases}
        \top &(S=\emptyset)\\
        \bot &(S\neq \emptyset).
    \end{cases}
\]
So we obtain
\[
\End=\hylo_{\Empty}.
\]
\end{example}

\begin{example}[Birthday]
    For a game $\X=(X, {\rel})$ and a state $x\in X$, its \demph{birthday}\footnote{The notion of birthday is usually defined for a set-theoretically constructed game, as an element of $\H$. Our version of the birthday of $x\in X$ coincides with the birthday of $\rd_{\X}(x)\in \H$.}, denoted by $\BirthDay_\X(x)$, is defined to be the length of the longest possible play from the state $x$. This game value $\{\BirthDay_{\X}\colon X \to \N\}$ is induced by the $\Pf$-algebra $\Xem= (\N,\xem)$\footnote{The notation $\Xem$ is not conventional. We adopt it because $\Xem$ is a kind of dual to $\Mex$. See \Cref{app:GrundyNumberisLeftAdjointToNim} for more details.}, where
    \[
    \xem \colon \Pf(\N)\to \N\colon S \mapsto \min\{n \in \N\mid \textrm{for any }m\in S, m<n\}.
    \]
    So we have
    \[
    \BirthDay = \hylo_{\Xem}.
    \]
\end{example}

\begin{example}[Misere outcome]
    In combinatorial game theory, a \demph{misere game} is a game where the player who takes the last move loses. In our framework, a misere game is understood by the same notion of a game, but a different notion of the outcome: the outcome should be modified so that the ending state is an $N$-state.
    Defining a $\Pf$-algebra $\MNP=(\{N,P\},\mnp)$ by
    \[
     \mnp \colon \Pf(\{N,P\})\to \{N,P\}\colon S \mapsto 
    \begin{cases}
        P &(S=\{N\})\\
        N &(S\neq \{N\}),
    \end{cases}
    \]
    the corresponding game evaluation is the \demph{misere outcome} $\Moc=\hylo_{\MNP}$. 
\end{example}

\begin{example}[Remoteness {\cite{smith1966graphs}}]\label{exmp:remoteness}
If you do not just care about whether you win or lose, but also about how quickly you can win or how long you can postpone losing, then what you should look at is not the outcome but a more informative game value called the \demph{remoteness}.
The remoteness takes values in $A=\{P_{2k}\mid k\in\mathbb{N}\} \cup \{N_{2k+1}\mid k\in\mathbb{N}\}$, where
\begin{itemize}
    \item $\Remoteness(x)=P_{2k}$ means that, if you move to $x$, then you can win in at most $2k$ moves, and
    \item $\Remoteness(x) = N_{2k+1}$ means that, if you move to $x$, you can avoid losing for at least $2k+1$ moves.
\end{itemize}
This is a recuresively defined game value induced by the $\Pf$-algebra structure 
\[
\alpha(S) \coloneqq
\begin{cases}
    N_{\min\{n\mid P_{n}\in S\}+1}&(\textrm{if $S$ contains some $P$})\\
    P_{\xem\{n\mid N_n\in S\}}&(\textrm{if $S$ contains only $N$}).
\end{cases}
\]
\invmemo{I feel the construction of Bouton monoid is strongly related to the order structure, or semi lattice structures or $\Pf$-algebra structure for a monad $\Pf$.}
\end{example}

\begin{corollary}\label{cor:gameMorphismsPreserveGrundyOutcomeAndOthers}
    Any game morphism preserves the outcome, the Grundy number, ended states, the birthday, the misere outcome, and the remoteness.
\end{corollary}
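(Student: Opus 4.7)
The plan is to observe that this corollary is an immediate consequence of \Cref{prop:AlgebraProvideGameValues} together with the explicit identifications just listed. Each of the six game values in question has been exhibited in the preceding examples as the family of hylomorphisms $\hylo_{\A}$ for a specific $\Pf$-algebra $\A$: the outcome corresponds to $\NP$, the Grundy number to $\Mex$, ended states to $\Empty$, the birthday to $\Xem$, the misere outcome to $\MNP$, and the remoteness to the algebra constructed in \Cref{exmp:remoteness}.

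First I would invoke \Cref{prop:AlgebraProvideGameValues}, which asserts that for every $\Pf$-algebra $\A$, the family $\hylo_{\A}=\{\hylo_{\A,\X}\colon X\to A\}_{\X}$ constitutes a game value in the sense of \Cref{def:gameValueAsCocone}. Unpacking this definition, to say that $\hylo_{\A}$ is a game value is precisely to say that for every game morphism $f\colon \X'\to \X$ we have
\[
\hylo_{\A,\X'} = \hylo_{\A,\X}\circ f,
\]
which is exactly the statement that $f$ preserves the game value. This is in turn just the first bullet of \Cref{lem:StabilityOfHylomorphisms} applied to the fact that every game morphism is, by \Cref{thm:main1:GamesAsRecursiveCoalgebras}, a recursive $\Pf$-coalgebra homomorphism.

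Then it suffices to apply this preservation statement to each of the six algebras above, using the identifications $\oc=\hylo_{\NP}$, $\G=\hylo_{\Mex}$, $\End=\hylo_{\Empty}$, $\BirthDay=\hylo_{\Xem}$, $\Moc=\hylo_{\MNP}$, and $\Remoteness=\hylo_{(A,\alpha)}$ established in the examples. There is no real obstacle to overcome: the entire content of the corollary has been loaded into the abstract framework, and the proof is a one-line invocation. If anything, the only subtlety is keeping straight that the ``preservation'' asserted here is the equality $\val_{\X'}=\val_{\X}\circ f$ rather than merely a compatibility up to some inequality — but this is exactly what \Cref{lem:StabilityOfHylomorphisms} delivers, since hylomorphisms are defined by a commutative (not merely lax) square.
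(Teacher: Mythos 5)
Your proposal is correct and follows exactly the paper's own route: each of the six values is identified with $\hylo_{\A}$ for the corresponding $\Pf$-algebra, and preservation is then the statement of \Cref{prop:AlgebraProvideGameValues}, i.e.\ the first half of \Cref{lem:StabilityOfHylomorphisms}. Nothing is missing.
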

\begin{proof}
These are just examples of \Cref{prop:AlgebraProvideGameValues}, and hence examples of the stability of hylomorphisms (\Cref{lem:StabilityOfHylomorphisms}).
\end{proof}

\invmemo{There is the other game value on the $2$-element set.}

Using the latter part of \Cref{lem:StabilityOfHylomorphisms}, we obtain the following statement:
\begin{proposition}\label{prop:AlgebraHomPreservesGameValue}
    For any $\Pf$-algebra homomorphism $g\colon \A\to \A'$ from $\A=(A, \alpha)$ to $\A'=(A',\alpha')$, the game value $\hylo_{\A'}$ can be calculated via $\hylo_{\A}$, i.e., we have $\hylo_{\A',\X} = g \circ \hylo_{\A, \X}$.
        \[
        \begin{tikzcd}[row sep=5pt]
            X\ar[rr, "\hylo_{\A',\X}"]\ar[rd, "\hylo_{\A,\X}"']&&A'\\
            &A\ar[ru, "g"']&
        \end{tikzcd}
        \]
\end{proposition}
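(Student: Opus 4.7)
The plan is to derive this proposition by invoking the second bullet of \Cref{lem:StabilityOfHylomorphisms} pointwise, once for every game $\X$. By \Cref{thm:main1:GamesAsRecursiveCoalgebras} each game $\X = (X, \str)$ is a recursive $\Pf$-coalgebra, so both $\hylo_{\A,\X}$ and $\hylo_{\A',\X}$ are well-defined as the unique coalgebra-algebra morphisms from $\X$ to $\A$ and to $\A'$ respectively. Fixing $\X$, the lemma immediately yields the identity $\hylo_{\A',\X} = g \circ \hylo_{\A,\X}$, which is exactly what the proposition claims.

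If I had to unpack the one-line application, I would paste two commutative squares side by side: the coalgebra-algebra square that defines $\hylo_{\A,\X}$ on the left, and the $\Pf$-algebra homomorphism square for $g \colon \A \to \A'$ on the right. The resulting outer rectangle exhibits $g \circ \hylo_{\A,\X}$ as a coalgebra-algebra morphism from $\X$ to $\A'$, and the uniqueness clause in the definition of a recursive coalgebra (\Cref{def:recursiveCoalgebras}) then forces this composite to coincide with $\hylo_{\A',\X}$.

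I do not expect any real obstacle here. The proposition is essentially a repackaging of the second half of \Cref{lem:StabilityOfHylomorphisms} as a statement about the cocone presentation of game values (\Cref{def:gameValueAsCocone}); the only thing worth noting is that the claimed identity must hold simultaneously for every game $\X \in \Gs$, which is automatic since the lemma is applied independently for each $\X$, and naturality of the resulting family in $\X$ has already been secured by the first bullet of \Cref{lem:StabilityOfHylomorphisms} (so both $\hylo_\A$ and $\hylo_{\A'}$ really are game values in the sense of \Cref{def:gameValueAsCocone}).
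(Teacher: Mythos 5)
Your proposal is correct and follows exactly the paper's route: the paper proves this proposition by citing the latter half of \Cref{lem:StabilityOfHylomorphisms}, applied to each game $\X$, just as you do. Your unpacking of the lemma via the pasted squares and the uniqueness clause of \Cref{def:recursiveCoalgebras} is the same diagram-chase that underlies the lemma itself, so there is nothing to add.
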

\begin{proof}
    This is just the latter half of \Cref{lem:StabilityOfHylomorphisms}.
\end{proof}
For example, the fact that $\oc$ is calculated by $\G$ (\Cref{prop:GrundynumberIsMoreInformativeThanOutcome}) immediately follows from \Cref{prop:AlgebraHomPreservesGameValue} and the fact that $g\colon \N \to \{N,P\}$ defined by
\[
g(n) \coloneqq \begin{cases}
    N &(n>0)\\
    P &(n=0)
\end{cases}
\]
is a $\Pf$-algebra homomorphism $\Mex \to \NP$. We can do similar arguments for the following $\Pf$-algebra homomorphisms as well:
\begin{itemize}
    \item $\End$ is calculated by $\BirthDay$.
    \item $\End$ is calculated by $\Remoteness$.
    \item $\oc$ is calculated by $\Remoteness$.
\end{itemize}

\begin{remark}[Comparison with \cite{bavsic2024categories}]
 As explained in \Cref{sec:Introduction}, this subsection has a lot of intersection with \cite{bavsic2024categories}. 
 Our \Cref{prop:AlgebraProvideGameValues} corresponds to their notion of \textit{valuations}  \cite[Definition 4.11]{bavsic2024categories} and \cite[Proposition 4.17]{bavsic2024categories}. Many examples of game values in this subsection also appear in \cite[Section 4]{bavsic2024categories}.
\end{remark}

\subsection{The terminal game consists of hereditarily finite sets}\label{ssec:terminalGame}
This subsection aims to describe the terminal game, which plays the central role in the next section (\Cref{sec:NimSumTypeTheoremSchema}). First, we will describe it categorically (Diagram \ref{eq:TerminalGameHereditarilyFiniteAdamek}), and then we will write it down in a set-theoretic way (\Cref{prop:TerminalGameAndInitialAlgebraAreHereditarilyFiniteSets}).

The calculation of the terminal recursive $T$-coalgebra ($=$ the terminal game, in our case) is a priori nontrivial. However, thanks to \Cref{prop:TerminalRecursiveCoalgebraIsInitialAlgebra}, it is reduced to the classical theorem of transfinite construction of an initial $T$-algebra \cite{pohlova1973sums, adamek1974free}, which is known as \textit{Adamek's fixed point theorem}. (A concise explanation of the theorem, which suffices for our purpose, is found in \cite[][Proposition 4.6.1 (1)]{Jacobs_2016a}.) The construction states that the initial algebra of $\Pf\colon \Set \to \Set$, which also provides the terminal object in $\Gs\simeq \RecCoalg{\Pf}$ (\Cref{prop:TerminalRecursiveCoalgebraIsInitialAlgebra}), is given by the following colimit in $\Set$
\begin{equation}\label{eq:TerminalGameHereditarilyFiniteAdamek}
    \begin{tikzcd}
            \emptyset \ar[r]& \Pf(\emptyset) \ar[r]& \Pf(\Pf(\emptyset)) \ar[r]&\Pf(\Pf(\Pf(\emptyset))) \ar[r]&\cdots \textrm{The Terminal Game!}
        \end{tikzcd}
\end{equation}
Here, we utilize the finite option condition in \Cref{def:game} to ensure the assumption of Adamek's fixed point theorem; the functor $\Pf\colon \Set \to \Set$ is finitary and hence preserves this colimit (cf. \Cref{rem:finiteOptions}).

Let us write down this colimit. 
(A canonical choice of) this colimit is known as the set of \demph{hereditarily finite sets} in set theory. For its set-theoretic context and the terminology, see textbooks including \cite[][section I.10]{kunen2013set}.
\begin{definition}[Hereditarily finite sets]\label{def:HereditarilyFiniteSets}
A \demph{hereditarily finite set} is recursively defined as a finite set of hereditarily finite sets\footnote{Rigorously speaking, a hereditarily finite set is a set that is ensured to be hereditarily finite by this recursive definition.}.
     The set of all hereditarily finite sets is denoted by $\H$.
\end{definition}
As this recursive definition might look confusing at first glance, let us give several examples. 
\begin{example}\label{exmp:HereditarilyFiniteSets}
The following sets are hereditarily finite sets:
\begin{itemize}
    \item The empty set $\emptyset$ is trivially hereditarily finite since it has no element.
    \item Then, the set $\{\emptyset\}$ is also hereditarily finite. 
    \item By induction, every (von Neumann's formulation of) natural number $n=\{0,1, \dots n-1\}$ is hereditarily finite. In other words, we have
    \[\N = \{0=\emptyset,\ 1=  \{\emptyset\},\ 2= \{\emptyset,\{\emptyset\}\},\ 3=\{\emptyset,  \{\emptyset\}, \{\emptyset,\{\emptyset\}\}\},\; \dots\;\} \subset \H.\] 
    \item The set $\{\{\emptyset\}\}$ is a hereditarily finite set that is not a natural number.
    \item Informally speaking, hereditarily finite sets are sets described by a finite number of parentheses, like $\{\{\},\{\{\{\}\}\},\{\{\}\}\}$.
    \item While all hereditarily finite sets are finite, the converse does not hold. For example, while the set $\{\R\}$ is a singleton, but it is not hereditarily finite.
\end{itemize}
\end{example}

\begin{remark}[von Neumann hierarchy]
    $\H$ is often denoted by $V_{\omega}$ in order to emphasize that it is the $\omega$-th of the von Neumann hierarchy.
\end{remark}
By definition, $\Pf(\H)$ is set-theoretically identical to $\H$. Therefore, $\H$ admits 
\begin{align*}
    \textrm{the canonical $\Pf$-algebra structure  }\;& \id_{\H}:\H \to \Pf(\H)\textrm{, and}\\
\textrm{the canonical $\Pf$-coalgebra structure  }\;&\id_{\H}:\Pf(\H) \to \H.
\end{align*}
\begin{proposition}[Terminal object of $\Gs$]\label{prop:TerminalGameAndInitialAlgebraAreHereditarilyFiniteSets}
The set $\H$ is the initial $\Pf$-algebra with the $\Pf$-algebra structure $\id_{\H}:\H \to \Pf(\H)$. Therefore, $\H$ is also the terminal object of $\Gs \simeq \RecCoalg{\Pf}$ with the $\Pf$-coalgebra structure $\id_{\H}:\Pf(\H) \to \H$.
\end{proposition}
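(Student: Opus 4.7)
The plan is to invoke Adamek's fixed point theorem exactly as teed up in the paragraph preceding the statement, then identify the resulting transfinite colimit with $\H$ on the nose, and finally transfer the algebra result to the coalgebra side via \Cref{prop:TerminalRecursiveCoalgebraIsInitialAlgebra} and the equivalence $\Gs \simeq \RecCoalg{\Pf}$ of \Cref{thm:main1:GamesAsRecursiveCoalgebras}.

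First I would apply Adamek's fixed point theorem to the endofunctor $\Pf \colon \Set \to \Set$. Since $\Pf$ is finitary (as cited in \Cref{rem:finiteOptions}), it preserves the $\omega$-indexed colimit
\begin{equation*}
\colim\bigl(\emptyset \to \Pf(\emptyset) \to \Pf(\Pf(\emptyset)) \to \Pf(\Pf(\Pf(\emptyset))) \to \cdots\bigr)
\end{equation*}
computed in $\Set$, and this colimit therefore carries a canonical structure of the initial $\Pf$-algebra, with structure map obtained from the comparison isomorphism $\Pf(\colim) \cong \colim$.

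Next I would identify this colimit strictly with $\H$. By induction on $n$, I would show that $\Pf^n(\emptyset)$ is (canonically, set-theoretically) the collection of hereditarily finite sets of rank at most $n$, and that the connecting map $\Pf^n(\emptyset)\to\Pf^{n+1}(\emptyset)$ is the literal subset inclusion $S \mapsto S$ (since a finite set of hereditarily finite sets of rank $<n$ is itself a hereditarily finite set of rank $\le n$, belonging to the next stage without modification). The filtered colimit of a chain of honest subset inclusions in $\Set$ is the directed union, which by \Cref{def:HereditarilyFiniteSets} is exactly $\H$. Under this identification, $\Pf(\H) = \H$ as a set, and the comparison isomorphism provided by Adamek's theorem becomes $\id_{\H}$, matching the structure map in the statement.

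The terminal-coalgebra claim then follows immediately from \Cref{prop:TerminalRecursiveCoalgebraIsInitialAlgebra}: the inverse of the initial algebra's structure map supplies the terminal recursive coalgebra, and inverting $\id_{\H}$ gives $\id_{\H}$ again (viewed now as the coalgebra structure $\H \to \Pf(\H)$); applying \Cref{thm:main1:GamesAsRecursiveCoalgebras} moves this to $\Gs$. The main obstacle is purely bookkeeping: one must verify that the connecting maps in the Adamek chain really are set-theoretic inclusions rather than merely monomorphisms up to isomorphism, so that the colimit coincides with $\H$ strictly and the induced algebra map comes out as $\id_{\H}$ rather than some isomorphism $\Pf(\H)\xrightarrow{\sim}\H$ requiring transport. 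Everything else is a direct invocation of previously cited theorems.
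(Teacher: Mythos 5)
Your proposal is correct and follows essentially the same route as the paper: Adamek's fixed point theorem for the finitary functor $\Pf$ applied to the chain in Diagram \ref{eq:TerminalGameHereditarilyFiniteAdamek}, identification of the colimit with $\H$, and then \Cref{prop:TerminalRecursiveCoalgebraIsInitialAlgebra} for the terminal recursive coalgebra. The only difference is that you spell out the bookkeeping (connecting maps are literal inclusions, so the structure map is $\id_{\H}$ on the nose), which the paper leaves implicit in its remark that $\H$ is ``a canonical choice'' of the colimit.
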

\begin{proof}
 The former part follows from Adamek's fixed point theorem (see Diagram \ref{eq:TerminalGameHereditarilyFiniteAdamek} and the discussion above). The latter follows from \Cref{prop:TerminalRecursiveCoalgebraIsInitialAlgebra}.
\end{proof}
By a mild abuse of notation, we will write $\H$ for the set of hereditarily finite sets, the initial $\Pf$-algebra, and the terminal recursive $\Pf$-coalgebra.

It is worth writing down the relation $\rel_{\H}$ of the terminal game $\H$.
\begin{definition}[Terminal game]\label{DefinitionUniversalGame}
    The \demph{terminal game} $\H=(\H,\rel_{\H})$ is the game whose underlying set is the set of all hereditarily finite sets $\H$ and whose relation $\rel_{\H}$ is given by 
    \[x \rel_{\H} y \iff y \in x.\]
\end{definition}

\begin{remark}[Ackerman's interpretation and the binary exponent nim]\label{rem:AckermanInterpretationAndTerminalGame}
As we promised, the terminal game is isomorphic to the binary exponent nim in \Cref{exmp:BinaryExponentNimOrTerminalGame}!
Consider the following $\Pf$-algebra structure on $\N$
    \[\bin: \Pf(\N)\to \N \colon S \mapsto \sum_{s\in S} 2^s,\]
    which is just the binary expression and hence bijective.
    The unique $\Pf$-algebra homomorphism $\mathsf{Ack}\colon (\H, \id_{\H}) \to (\N, \bin)$ is called \demph{Ackerman's interpretation} \cite{ackermann1937widerspruchsfreiheit} and known to be bijective.
    Therefore, this $\Pf$-algebra $(\N,\bin)$ is isomorphic to the initial $\Pf$-algebra $(\H, \id_{\H})$ and hence initial as well.
    Consequently, the inverse function $\bin^{-1}\colon \N \to \Pf(\N)$, which coincides with the binary exponent nim, is also a terminal object in $\Gs \simeq \RecCoalg{\Pf}$. 
\end{remark}


\begin{notation}\label{not:ReductionMap}
    For each game $\X$, the unique game morphism to the terminal game $\H$ is denoted by $\rd_{\X}\colon \X \to \H$.
\end{notation}

\revmemo{the notation}

\begin{example}\label{exmp:vonNeumann}
    For the $1$-heap nim $\Nim{1}=(\N, \str)$, the game morphism $\rd_{\Nim{1}}\colon \N \to \H$ is nothing other than the von Neumann formulation of natural numbers. (cf. \Cref{exmp:NimCoalgebra})
\end{example}
\begin{lemma}[Characterization of game values]\label{lem:GameValuesCharacterization}
    For a (class-size) family of functions $\val=\{\val_\X\colon X \to B\}_{\X=(X, \str)\textrm{: game}}$, the following conditions are equivalent.
    \begin{enumerate}
        \item The family $\val$ is a game value (\Cref{def:gameValueAsCocone}), i.e., any game morphism preserves $\val$.
        \item The equality $\val_{\X}=\val_{\H}\circ \rd_{\X}$ holds for any game $\X$. \revmemo{If A is recursively defined, this coincides with the hylo morephism decomposition. Cite it.}
        \item There exists a $\Pf$-algebra $\A=(A, \alpha)$ and a function $g\colon A \to B$ such that $\val_{\X}=g\circ \hylo_{\A, \X}$ for any game $\X$.
    \end{enumerate}
\end{lemma}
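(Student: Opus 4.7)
The plan is to establish the equivalences cyclically: $(1) \Rightarrow (2) \Rightarrow (3) \Rightarrow (1)$. The key observation tying everything together is that the terminal game $\H$ doubles as the initial $\Pf$-algebra (\Cref{prop:TerminalGameAndInitialAlgebraAreHereditarilyFiniteSets}), and under this identification the reduction morphism coincides with a hylomorphism, i.e.\ $\rd_{\X} = \hylo_{\H,\X}$. I would first record this small but crucial remark: the structure map of $\H$ as a $\Pf$-algebra is $\id_{\H}$, so a coalgebra homomorphism $f\colon \X \to (\H,\id_{\H})$ and a coalgebra-algebra morphism $f\colon \X \to (\H,\id_{\H})$ are required to make exactly the same square commute.

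For $(1) \Rightarrow (2)$, I would simply apply the definition of game value (\Cref{def:gameValueAsCocone}) to the canonical game morphism $\rd_{\X}\colon \X \to \H$, which is uniquely determined by terminality, yielding $\val_{\X} = \val_{\H}\circ \rd_{\X}$. For $(2) \Rightarrow (3)$, I would choose the $\Pf$-algebra $\A \coloneqq \H = (\H,\id_{\H})$ and set $g \coloneqq \val_{\H}\colon \H \to B$. By the remark above, $\hylo_{\H,\X} = \rd_{\X}$, so $(2)$ rewrites as $\val_{\X} = g\circ \hylo_{\A,\X}$. For $(3)\Rightarrow(1)$, given any game morphism $f\colon \X \to \Y$, the first half of \Cref{lem:StabilityOfHylomorphisms} yields $\hylo_{\A,\X} = \hylo_{\A,\Y}\circ f$, hence $\val_{\X} = g\circ\hylo_{\A,\X} = g\circ\hylo_{\A,\Y}\circ f = \val_{\Y}\circ f$, which is exactly the cocone condition.

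None of the three implications presents a real obstacle; the only subtle point to get right — and the step worth stating carefully — is the identification $\rd_{\X} = \hylo_{\H,\X}$, which depends on the fact that the same object $\H$ simultaneously carries the initial $\Pf$-algebra structure and the terminal recursive $\Pf$-coalgebra structure (\Cref{prop:TerminalRecursiveCoalgebraIsInitialAlgebra}, \Cref{prop:TerminalGameAndInitialAlgebraAreHereditarilyFiniteSets}), and that these two structure maps are mutually inverse (in this case, both equal to $\id_{\H}$). Once this is isolated, the rest is a one-line diagram chase for each implication.
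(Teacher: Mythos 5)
Your proof is correct and follows essentially the same route as the paper: $(1)\Rightarrow(2)$ via terminality of $\H$, $(2)\Rightarrow(3)$ by taking $\A=(\H,\id_{\H})$ and $g=\val_{\H}$, and $(3)\Rightarrow(1)$ via the stability of hylomorphisms (\Cref{prop:AlgebraProvideGameValues}). Your explicit remark that $\rd_{\X}=\hylo_{\H,\X}$ because the algebra and coalgebra structure maps on $\H$ coincide is exactly the identification the paper uses implicitly in its one-line argument.
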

\begin{proof}
    The implication $(1)\implies (2)$ follows since $\rd_{\X}\colon \X \to \H$ is a game morphism. The implication $(2)\implies (3)$ follows since we can take the initial $\Pf$-algebra $\H$ as $\A$, and $\val_{\H}$ as $g$ using the terminal game $\H$
(cf. \Cref{prop:TerminalGameAndInitialAlgebraAreHereditarilyFiniteSets}). 
    The last implication $(3)\implies (1)$ follows from \Cref{prop:AlgebraProvideGameValues}.
\end{proof}

\begin{remark}[$\H$ as colimit, and a game with a starting state.]\label{rem:HasColimitsAndGameWithStartingPoint}
    For category theorists, \Cref{lem:GameValuesCharacterization} is an immediate corollary of an almost trivial fact that $\H$ is the colimit of the forgetful functor $U\colon \Gs \to \Set$, and the morphisms $\{\rd_{\X}\colon X \to \H\}_{\X=(X, \str)\textrm{: game}}$ is the colimit cocone of this large colimit. Since the category $\Gs$ has the terminal object $\H$, the colimit of this diagram is just the image of the terminal object $\H$. As game values are defined as cocones of $U$ (\Cref{def:gameValueAsCocone}), one can say that $\H$ is the \dq{universal game value} (but in a bit boring way).

    This phenomenon is loosely related to the relationships between different definitions of games. In some context, an impartial game is defined as an element of $\H$. From our point of view, each element $x\in X$ defines a \dq{game} $\rd_{\X}(x)\in \H$ in that sense. In slightly different words, if we consider games $\X$ equipped with a \dq{starting state} $x\in \X$, we can define the category of pointed games $\Gs_{*}$, whose connected components are in one-to-one correspondence with hereditarily finite sets:
   \[
   \pi_0\left(\Gs_{*}\right) = \pi_0\left(\int U\right) = \colim \left(U\colon \Gs\to \Set\right)= \H.
   \]
\end{remark}

\begin{remark}\label{rem:notAllGameValueAreInducedByPfAlgebras}
    As the condition (3) in \Cref{lem:GameValuesCharacterization} infers, some game values are not induced by $\Pf$-algebras. In fact, while the number of $\Pf$-algebra structures on the $2$-element set $\{\top, \bot\}$ is $\#\{\top,\bot\}^{\Pf(\{\top,\bot\})}=16$, there are exactly $\#\{\top,\bot\}^{\H}=2^{\aleph_0}$ distinct game values on it due to \Cref{lem:GameValuesCharacterization}.
    \revmemo{The other 2-element algebra}
\end{remark}





\section{Application: Nim-sum type theorem schema and the Bouton monoids}\label{sec:NimSumTypeTheoremSchema}
As an example of usages of our framework, this section provides a theorem schema that generalizes the nim-sum rule (\Cref{thm:GeneralizedBoutonTheoremNimSumRule}). We firstly define the notion of a Bouton monoid (\Cref{def:BoutonMonoidAsUniversal}), and then prove that it always exists (\Cref{thm:existenceTheoremOfBoutonMonoid}).


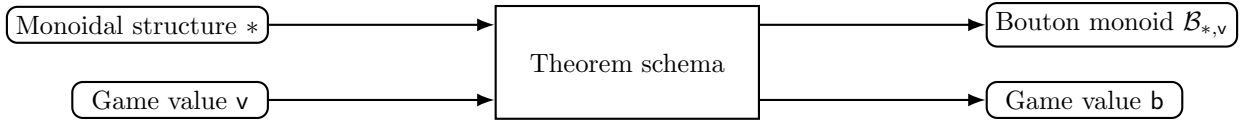
\begin{figure}[ht]
\centering
\begin{tikzpicture}[>=Latex, thick, node distance=10mm and 15mm]

  \node[draw, minimum width=35mm, minimum height=15mm, align=center] (box) {Theorem schema};

  \node[draw, rounded corners=4pt, left=of box.west, xshift=-15mm, yshift=5mm, anchor=east, minimum width=26mm, align=center] (mono) {Monoidal structure $\ast$};
  \node[draw, rounded corners=4pt, left=of box.west, xshift=-15mm, yshift=-5mm, anchor=east, minimum width=26mm, align=center] (pf) {Game value $\val$};

  \node[draw, rounded corners=4pt, right=of box.east, xshift=15mm, yshift=5mm, anchor=west, minimum width=26mm, align=center] (bouton) {Bouton monoid $\B_{{\ast}, \val}$};
  \node[draw, rounded corners=4pt, right=of box.east, xshift=15mm, yshift=-5mm, anchor=west, minimum width=26mm, align=center] (gb) {Game value $\Gb$};

  \coordinate (boxWup) at ([yshift=5mm]box.west);
  \coordinate (boxWdn) at ([yshift=-5mm]box.west);
  \coordinate (boxEup) at ([yshift=5mm]box.east);
  \coordinate (boxEdn) at ([yshift=-5mm]box.east);

  \draw[->] (mono.east) -- (boxWup);
  \draw[->] (pf.east)   -- (boxWdn);
  \draw[->] (boxEup)  -- (bouton.west);
  \draw[->] (boxEdn)  -- (gb.west);
\end{tikzpicture}
\caption{How theorem schema works}
  \label{fig:HowTheoremSchemaWorks}
\end{figure}
\begin{figure}[ht]
\centering
\begin{tikzpicture}[>=Latex, thick, node distance=10mm and 15mm]

  \node[draw, minimum width=35mm, minimum height=15mm, align=center] (box) {Theorem schema};

  \node[draw, rounded corners=4pt, left=of box.west, xshift=-15mm, yshift=5mm, anchor=east, minimum width=26mm, align=center] (mono) {Conway addition ${+}$};
  \node[draw, rounded corners=4pt, left=of box.west, xshift=-15mm, yshift=-5mm, anchor=east, minimum width=26mm, align=center] (pf) {$\oc$};

  \node[draw, rounded corners=4pt, right=of box.east, xshift=15mm, yshift=5mm, anchor=west, minimum width=26mm, align=center] (bouton) {Nim sum $(\N, \nsum)$};
  \node[draw, rounded corners=4pt, right=of box.east, xshift=15mm, yshift=-5mm, anchor=west, minimum width=26mm, align=center] (gb) {$\G$};

  \coordinate (boxWup) at ([yshift=5mm]box.west);
  \coordinate (boxWdn) at ([yshift=-5mm]box.west);
  \coordinate (boxEup) at ([yshift=5mm]box.east);
  \coordinate (boxEdn) at ([yshift=-5mm]box.east);

  \draw[->] (mono.east) -- (boxWup);
  \draw[->] (pf.east)   -- (boxWdn);
  \draw[->] (boxEup)  -- (bouton.west);
  \draw[->] (boxEdn)  -- (gb.west);
\end{tikzpicture}
    \caption{The nim-sum as an example of the theorem schema}
    \label{fig:NimSumasSchemaWorks}
\end{figure}
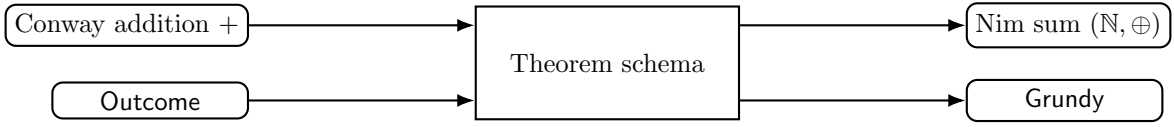

\subsection{Decomposing games with monoidal structures}\label{ssec:GameDecompotisionAsMonoidalStructure}
The goal of this subsection is to find a systematic way to decompose a complex game into simpler games. An established categorical tool to decompose objects is a \demph{monoidal structure} on a category.
For those who are not familiar with this notion, a standard reference is \cite{Maclane1998CWMcategories}.

In this subsection, we will observe several monoidal structures on the category $\Gs$.

\begin{remark}\label{rmk:nonUnital}
    In this section, we also consider \demph{non-unital monoidal category}, which does not require the unit object. \revmemo{cite... Lurie?} Everything in this section works by replacing monoidal by non-unital monoidal, and monoid by semigroup. If the reader is not familiar with non-unital monoidal structure, one can restrict arguments to the unital cases, while considering remoteness will require non-unital one (\Cref{exmp:RemotenessAsBoutonMonoid}).
\end{remark}

Hisorically, the Conway addition is regarded as the canonical choice of monoidal structure on categories of games (for example, \cite{joyal1977remarques}). It also provides a monoidal structure on our category.
\begin{proposition}\label{prop:conwayAddition}
    The Conway addition is a symmetric monoidal closed structure on $\Gs$ with the monoidal unit $1=(\{*\}, \rel_{1}=\emptyset)$.
\end{proposition}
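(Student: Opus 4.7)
The plan is to establish this in three stages: (i) verify that the underlying set $X\times Y$ with the claimed relation really is a game, (ii) check functoriality on morphisms and the coherence isomorphisms of the claimed symmetric monoidal structure, and (iii) derive closedness from local finite presentability and comonadicity of $U\colon\Gs\to\Set$ as stated in \Cref{thm:CategoricalPropertiesOfGames}.

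For (i), finite options at $(x,y)$ is immediate since the set of options is the disjoint union of two finite sets indexed by $\str_{\X}(x)$ and $\str_{\Y}(y)$. Finite time is the first nontrivial point: suppose for contradiction that $(x_0,y_0)\rel(x_1,y_1)\rel\cdots$ is an infinite path in $\X+\Y$. By definition, each step changes exactly one coordinate, so partitioning $\N$ according to which coordinate changes gives a partition into two sets, at least one of which is infinite; projecting to the corresponding factor yields an infinite path in $\X$ or in $\Y$, contradicting the finite-time condition on the summand.

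For (ii), I would define $f+g\colon\X+\Y\to\X'+\Y'$ on morphisms by $(f+g)(x,y)=(f(x),g(y))$ and verify both clauses of \Cref{def:GameMorphism}: the graph-morphism clause is a direct case-split on which coordinate moves, and the path-lifting clause reduces to path-lifting of $f$ or of $g$ on the coordinate that moves in the target. The associator, unitors and braiding are the cartesian ones on underlying sets; the one substantive check is that they intertwine the relations, e.g.\ for the left unitor $\lambda_{\X}\colon 1+\X\to\X$, $(*,x)\mapsto x$, the relation on $1+\X$ collapses to the relation on $\X$ precisely because $1=(\{*\},\emptyset)$ has no moves. The pentagon, hexagon and triangle axioms then commute because $U\colon\Gs\to\Set$ is faithful and the corresponding diagrams commute in the cartesian monoidal category $\Set$.

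For (iii), it suffices to show that $-+\Y\colon\Gs\to\Gs$ preserves all small colimits; the adjoint functor theorem for locally presentable categories (applicable by \Cref{thm:CategoricalPropertiesOfGames}) then yields a right adjoint $[\Y,-]$, establishing the closed structure. Since $U$ is comonadic, it creates, preserves and reflects colimits. On underlying sets, $U\circ(-+\Y)$ equals $(-\times UY)\circ U$, and $-\times UY$ is cocontinuous on $\Set$ because the cartesian product preserves colimits in each variable. Thus $U\circ(-+\Y)$ is cocontinuous, and since $U$ reflects colimits, so is $-+\Y$.

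The main obstacle I anticipate is the finite-time verification in (i): it rests on the combinatorial fact that a Conway move touches exactly one coordinate, preventing any ``oscillation'' that could produce an infinite path projecting to finite paths on both factors. Once that is secured, the symmetric monoidal axioms are inherited from the cartesian structure on $\Set$, and closedness follows essentially formally from the heavy categorical infrastructure established in the appendix.
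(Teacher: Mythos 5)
Your proposal is correct and follows essentially the same route as the paper: the symmetric monoidal axioms are checked directly (the paper dismisses this as "easily proven", and your finite-time and path-lifting verifications are exactly the details being elided), while closedness is obtained precisely as in the paper's \Cref{Cor:closedness}, i.e.\ cocontinuity of $-\ast\Y$ via the forgetful functor reflecting colimits plus the adjoint functor theorem for locally presentable categories. The only cosmetic difference is that you invoke comonadicity where the paper only needs the colimit-reflection already provided by \Cref{lem:CocompletenessAndCreationOfColimitsConservative}.
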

\begin{proof}
    Being symmetric monoidal is easily proven. The closedness, which we will not use in this section, is proven in \Cref{Cor:closedness}.
\end{proof}


Let us see some other examples.
\begin{example}[Selective sum \cite{smith1966graphs}]\label{exmp:SelctiveSum}
For a finite number of games $\X_1, \dots, \X_n$ with $n\geq0$, their \demph{selective sum} $\X_1\lor  \dots\lor \X_n$ is the game in which each player selects at least one games out of the $n$ games and makes one move for each selected game. More foramlly spealing, the selective sum $\X\lor \Y$ of two games $\X=(X, \rel_\X), \Y=(Y,\rel_\Y)$ is defined as follows:
    \begin{itemize}
        \item Its underlying set is $X\times Y$.
        \item $ (x,y) \rel_{\X\lor\Y} (x',y')$ is defined by
        \[
    (x,y) \rel_{\X\lor\Y} (x',y') \iff (x\to_\X x' \land y=y')\lor(x\to_\X x' \land y\to_\Y y')\lor(x=x' \land y\to_\Y y')
    \]
    \end{itemize}
    The selective sum defines a (unital, symmetric, and closed) monoidal structure on $\Gs$ with the monoidal unit $1=(\{*\}, \rel_{1}=\emptyset)$.
\end{example}

\begin{example}[Conjunctive sum \cite{smith1966graphs}]\label{exmp:ConjunctiveSum}
For a finite and positive number of games $\X_1, \dots, \X_n$ with $n>0$, their \demph{conjunctive sum} $\X_1\land \dots\land \X_n$ is the game in which each player makes a move for all of the $n$ games. More foramlly spealing, the conjunctive sum $\X\land \Y$ of two games $\X=(X, \rel_\X), \Y=(Y,\rel_\Y)$ is defined as follows:
    \begin{itemize}
        \item Its underlying set is $X\times Y$.
        \item $ (x,y) \rel_{\X\land\Y} (x',y')$ is defined by
        \[
    (x,y) \rel_{\X\lor\Y} (x',y') \iff (x\to_\X x' \land y\to_\Y y')
    \]
    \end{itemize}
    The conjuctive sum defines a non-unital\footnote{The conjunctive sum defines a monoidal structure on $\Coalg{\Pf}$ with a monoidal unit (\Cref{fig:LoopNonGame}), which fails to be a game.} (but symmetric and closed) monoidal structure on $\Gs$.
\end{example}

\subsection{Synthesizing game values with Bouton monoids}\label{ssec:SynthesizingWithBoutonMonoid}

\subsubsection{Bouton monoid}\label{sssec:BoutonMonoid}
In the story of the Bouton theorem (\Cref{sec:GamesasGraphs}), the abelian group of the nim-sum $(\N, \nsum)$ (\Cref{def:NimSum2}) and the notion of Grundy number $\G=\{\G_\X\colon X \to \N\}_{\X=(X, \str)\textrm{: game}}$ plays a central role. More concretely, there are two characterizing properties of them:
\begin{description}
    \item[(1) Informative (\Cref{prop:GrundynumberIsMoreInformativeThanOutcome})] For any game $\X$ and any element $x\in X$, the Grundy number is more informative than the outcome in the sense that
    \[\oc_{\X}(x)=P \iff \G_{\X}(x)=0.\]
    \item[(2) Decomposable (\Cref{thm:GeneralizedBoutonTheoremNimSumRule})]
    For any games $\X=(X, \str_{\X}), \Y=(Y,\str_{\Y})$ and any states $x\in X, y\in Y$, the Grundy number of the sum state $(x,y)\in \X+\Y$ is decomposable in the sense that 
    \[\G_{\X+ \Y}(x,y) = \G_{\X}(x) \nsum \G_{\Y}(y).\]
\end{description}


We will define the notion of a Bouton monoid as the minimum monoid that satisfies the two conditions above. The existence will be proven later (\Cref{thm:existenceTheoremOfBoutonMonoid}). 

In the follwoing definition, we will use the notion of lax monoidal functors. For those who are not familiar with that notion, one can just consider monoidal structure $\ast$ on $\Gs$ that makes $U$ strictly monoidal, i.e., $U\X \times U\Y =U(\X\ast \Y)$. In fact, all  the three monoidal structures in \Cref{ssec:GameDecompotisionAsMonoidalStructure} make $U\colon \Gs \to \Set$ strict monoidal.
\begin{notation}[Notations around lax monoidal functors]\label{not:laxmonoidal}
    If one consider a monoidal structure $\ast$ on the category $\Gs$ that makes $U\colon \Gs \to \Set$ lax monoidal\footnote{Rigorously speaking, as the comparison map is a part of the lax monoidal functor, saying that \dq{$U$ is lax monoidal} is somewhat misleading. Similarly, to be rigorous, we need to specify the associativity and the unitality maps of the monoidal structure. However, we intentionally hide these data by a conventional abuse of notation, partially because of \Cref{rmk:nonUnital}.} (with respect to the cartesian monoidal structure on $\Set$), we will write 
\[
\mu_{\X,\Y}\colon U\X \times U\Y \to U(\X\ast \Y)
\]
for the chosen comparison map. Furthermore, for a pair of states $(x,y)\in U\X\times U\Y$, we will write $(x,y)\in U(\X\ast \Y)$ for the state $\mu_{\X,\Y}(x,y)$. Notice that it is compatible with the notations in the previous sections, for example, in \Cref{thm:GeneralizedBoutonTheoremNimSumRule}.
\end{notation}

Now we are ready to state the definition of the Bouton monoid. To aid in understanding the definition, \Cref{tab:correspondenceTableBetweenTheGeneralCasesAndTheNimSumSase} shows the correspondence between the general case and the specific case of the nim-sum.

\begin{definition}\label{def:BoutonMonoidAsUniversal}
Let $\ast$ be a monoidal structure on $\Gs$ such that the forgetful functor $U\colon \Gs \to \Set$ is lax monoidal, and let $\val=\{\val_{\X}\colon X \to A\}_{\X=(X, \str)\textrm{: game}}$ be a game value.
The \demph{Bouton monoid} of $\ast$ and $\val$ is 
\begin{itemize}
    \item a game value $\Gb=\{\Gb_{\X}\colon X \to \B\}$ equipped with
    \item a function $a\colon \B \to A$ and
    \item a monoid structure $\circledast\colon \B \times \B \to \B$
\end{itemize}
that satisfies the following three conditions.
\begin{description}
    \item[(1) Informative] For any game $\X$, we have
    \[\val_{\X}= a\circ \Gb_{\X}.\]
    \item[(2) Decomposable]
    For any games $\X=(X, \str_{\X}), \Y=(Y,\str_{\Y})$ and any states $x\in X, y\in Y$, we have 
    \[\Gb_{\X\ast \Y}(x,y) = \Gb_{\X}(x) \circledast \Gb_{\Y}(y).\]
    \item[(3) Minimum] It is minimum among such data. More concretely, for any data of the same type $(\Gb'=\{\Gb'_{\X}\colon X \to \B'\}, a'\colon \B' \to A, \circledast' \colon \B'\times \B' \to \B')$ satisfying (the appropriate substitutes of) the above two conditions, there exsits a surjective monoid homomorphism $q\colon \bullet \twoheadrightarrow \B$ from a submonoid 
    $\bullet \subset\B'$
    such that the following diagram
    \[
    \begin{tikzcd}[row sep=10pt]
            &\bullet\ar[r, rightarrowtail, "\subset"]\ar[dd, twoheadrightarrow, "q"]&\B'\ar[rd, "a'"]&\\
        X\ar[ru, "\Gb'_{\X}"]\ar[rd, "\Gb_{\X}"']&&&A\\
            &\B\ar[rru, "a"', bend right =17pt]&&
    \end{tikzcd}
    \]
    commutes for any game $\X=(X,\str)$.
\end{description}
\end{definition}
\begin{table}[ht]
    \centering
    \begin{tabular}{c|c}
         \textbf{General case}& \textbf{Nim-sum case}\\ \hline
         Monoidal structure $\ast$& Conway addition $+$\\ \hline
         (Target) game value $\val$&$\oc$\\ \hline
         $\mu_{\X,\Y}$& $\id_{X\times Y}$\\ \hline
         $\Gb_{\X}$& $\G_{\X}$\\ \hline
         $\B$& $\N$\\ \hline
         $\circledast$& Nim-sum $\nsum$\\ \hline
         $a\colon \B \to A$& $\N \to \{N,P\}\colon 0\mapsto P$\\ \hline
         $\val_{\X}= a\circ \Gb_{\X}$ & \Cref{prop:GrundynumberIsMoreInformativeThanOutcome} \\ \hline
         $\Gb_{\X\ast \Y}(x,y) = \Gb_{\X}(x) \circledast \Gb_{\Y}(y)$& \Cref{thm:GeneralizedBoutonTheoremNimSumRule}\\ \hline
    \end{tabular}
    \caption{The correspondence table between the general cases and the nim-sum case}\label{tab:correspondenceTableBetweenTheGeneralCasesAndTheNimSumSase}
\end{table}

\begin{remark}[Is this minimality condition weird?]
    Some readers may be unfamiliar with this kind of minimality condition. But actually it is very common especially in the automata theory. For example, the minimality of automata and syntactic monoids (\Cref{exmp:syntacticMonoid}) are stated in this form. See textbooks including  \cite{pin2020mathematical}, and see also \cite{colcombet2020automata} for an elegant categorical framework. This minimality condition says that the Bouton monoid $\B$ is a subquotient of any others, which informally means that $\B$ has neither redundant elements ($=$ no proper submonoids) nor redundunt fineness ($=$ no proper quotient monoids).
\end{remark}

The goal of this subsection is to prove the existence of the Bouton monoid.
\begin{theorem}[Existence of Bouton monoid]\label{thm:existenceTheoremOfBoutonMonoid}
    For any monoidal structure $\ast$ on $\Gs$ such that the forgetful functor $U\colon \Gs \to \Set$ is lax monoidal, and any game value $\val=\{\val_{\X}\colon X \to A\}_{\X=(X, \str)\textrm{: game}}$, the Bouton monoid exists.
\end{theorem}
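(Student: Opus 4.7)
My plan is to exploit the terminal game $\H$ together with Lemma~\ref{lem:GameValuesCharacterization}, which says every game value factors uniquely through $\rd_\X\colon X\to\H$. First I would endow $\H$ with a canonical binary operation
\[\circledast_\H\colon\H\times\H\xrightarrow{\mu_{\H,\H}}U(\H\ast\H)\xrightarrow{U\rd_{\H\ast\H}}\H.\]
Associativity is essentially free: since $\H$ is terminal in $\Gs$, any two game morphisms $(\H\ast\H)\ast\H\to\H$ must coincide, and combining this with naturality of $\mu$ and the associator coherence forces $(x\circledast_\H y)\circledast_\H z=x\circledast_\H(y\circledast_\H z)$; in the unital case, $\rd_I$ applied to the distinguished element of $UI$ furnishes a two-sided identity by the same terminality argument. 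This produces a first candidate triple $(\rd_{\bullet},\val_\H,\circledast_\H)$: condition~(1) is exactly the factorization of Lemma~\ref{lem:GameValuesCharacterization}, while condition~(2) is a short diagram chase combining naturality of $\mu$ with $\rd_{\H\ast\H}\circ(\rd_\X\ast\rd_\Y)=\rd_{\X\ast\Y}$, which itself follows from terminality of $\H$.

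Second, to achieve minimality I would pass to a Nerode-style syntactic quotient: declare $x\sim y$ on $\H$ iff $\val_\H(u\circledast_\H x\circledast_\H v)=\val_\H(u\circledast_\H y\circledast_\H v)$ for all $u,v\in\H$ (replacing two-sided contexts by possibly one-sided or empty contexts in the non-unital case of Remark~\ref{rmk:nonUnital}). A routine check shows $\sim$ is a congruence on $(\H,\circledast_\H)$. Setting $\B\coloneqq\H/{\sim}$ with induced operation $\circledast$, $a([x])\coloneqq\val_\H(x)$ (well-defined because the trivial context is allowed), and $\Gb_\X\coloneqq[-]\circ\rd_\X$, conditions~(1) and~(2) descend immediately from the candidate above.

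The main obstacle is verifying minimality. Given another candidate $(\Gb',a',\circledast')$ with target $\B'$, Lemma~\ref{lem:GameValuesCharacterization} yields $\Gb'_\X=\Gb'_\H\circ\rd_\X$; applying condition~(2) with $\X=\Y=\H$ and unwinding gives
\[\Gb'_\H(x\circledast_\H y)=\Gb'_\H(x)\circledast'\Gb'_\H(y),\]
so $\Gb'_\H\colon(\H,\circledast_\H)\to(\B',\circledast')$ is a monoid homomorphism and its image $\bullet\coloneqq\Gb'_\H(\H)\subset\B'$ is a submonoid. I would then define $q\colon\bullet\twoheadrightarrow\B$ by $q(\Gb'_\H(x))\coloneqq[x]$. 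Well-definedness is the key point: if $\Gb'_\H(x)=\Gb'_\H(y)$, then for any $u,v\in\H$, condition~(1) for $\Gb'$ together with $\Gb'_\H$ being a homomorphism yields
\[\val_\H(u\circledast_\H x\circledast_\H v)=a'\bigl(\Gb'_\H(u)\circledast'\Gb'_\H(x)\circledast'\Gb'_\H(v)\bigr),\]
a quantity depending on $x$ only through $\Gb'_\H(x)$, so $x\sim y$. Surjectivity of $q$ is automatic, the homomorphism property is inherited from the quotient map $\H\twoheadrightarrow\B$, and the required diagram commutes by $q\circ\Gb'_\X=[-]\circ\rd_\X=\Gb_\X$ and $a\circ q=a'|_{\bullet}$.

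The only real subtlety is the unital versus non-unital bookkeeping flagged in Remark~\ref{rmk:nonUnital}: I would first prove everything cleanly under the unital hypothesis, where the Nerode congruence uses full two-sided contexts and $\bullet$ automatically contains the identity, and then observe that the very same arguments carry through for semigroups once contexts are permitted to be empty on either side.
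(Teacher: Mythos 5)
Your proof is correct and follows essentially the same route as the paper: the miniature monoid structure $\ast_{\H}$ on the terminal game $\H$ obtained from terminality and lax monoidality of $U$ (\Cref{lem:miniatureMonoidReflectsTheMonoidalStructure}), followed by a syntactic quotient of $\val_{\H}$ whose universal property yields the minimality condition via the monoid homomorphism $\Gb'_{\H}$. The only cosmetic difference is that you inline the Nerode-style congruence and its universality, whereas the paper packages this step as the abstract minimum monoid factorization (\Cref{def:minimumMonoidFactorization}, \Cref{prop:UniqueExistenceOfMinimumMonoidFactorization}) and then invokes it.
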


\subsubsection{Step 1: Miniature monoid}\label{sssec:MinitureMonoid}
We start with constructing an approximation of the Bouton monoid only by the information of the monoidal structure $\ast$ forgetting the terget game value $\val$. We will call the approximation by a temporary name: \demph{the miniture monoid} of the monoidal structure $\ast$.

The following trivial proposition is usually very boring due to the triviality of the terminal object. But it is worth considering in a category of coalgebras, in which a terminal object tends to be non-trivial.
\begin{lemma}
    For any monoidal category 
    with a terminal object $1$, the object $1$ has a unique monoid structure with respect to the monoidal structure.
\end{lemma}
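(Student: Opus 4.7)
The plan is to show both existence and uniqueness by repeatedly invoking the universal property of the terminal object: any two morphisms into $1$ with the same domain are equal. Writing $\otimes$ for the monoidal product and $I$ for the monoidal unit, a monoid structure on $1$ consists of a multiplication $m\colon 1\otimes 1\to 1$ and a unit $e\colon I\to 1$ satisfying the associativity and unitality axioms. Since the codomain is $1$ in both cases, there is exactly one candidate for each: the unique maps $!_{1\otimes 1}\colon 1\otimes 1\to 1$ and $!_I\colon I\to 1$. This already establishes uniqueness.

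For existence, I would take $m\coloneqq !_{1\otimes 1}$ and $e\coloneqq !_I$ and observe that each of the monoid axioms is an equality of two parallel morphisms whose common codomain is $1$. For instance, the associativity axiom is an equality of morphisms $1\otimes 1\otimes 1 \to 1$; both sides must coincide with $!_{1\otimes 1\otimes 1}$. Similarly, the two unit axioms are equalities of morphisms with codomain $1$ and domain $I\otimes 1$ and $1\otimes I$ respectively, so they hold automatically. Naturality/coherence issues (associators, unitors) likewise disappear because their composites with $m$ and $e$ land in $1$.

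For the non-unital case flagged in \Cref{rmk:nonUnital}, we simply drop $e$ and the unit axioms; the same argument shows that $m=!_{1\otimes 1}$ is the unique semigroup structure on $1$. The only step that requires any attention is verifying that all the axiom diagrams actually have codomain $1$, which is immediate since each axiom equates morphisms obtained by composing with $m$ or $e$ and thus ends at $1$.

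I do not expect any real obstacle: the lemma is a pure universal-property argument, and the reason it is being stated explicitly is to apply it in the sequel to the less trivial terminal game $\H$ in $\Gs$, where the same abstract fact will endow $\H$ with a canonical monoid structure for any monoidal product $\ast$ on $\Gs$. The work of the paper lies in exploiting that monoid structure, not in proving the lemma itself.
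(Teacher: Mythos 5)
Your proof is correct and follows essentially the same argument as the paper: the maps $m\colon 1\otimes 1\to 1$ and $e\colon I\to 1$ are forced by terminality, and every monoid axiom is an equation between parallel morphisms into the terminal object, hence automatic. The remark about the non-unital variant also matches the paper's intended use.
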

\begin{proof}
In a monoidal category 
    $(\C, I, \otimes)$
    with a terminal object $1$, 
    the multiplication map $m\colon 1\otimes 1 \to 1$ (and the unit map $e\colon I \to 1$ if you consider a unital structure) is uniquely determined since $1$ is terminal. All required commutativities are trivial since $1$ is the terminal object. 
\end{proof}
In our case of the category of games $\Gs$, the unique game morphism $\rd_{\H \ast\H}\colon \H \ast \H \to \H$ provides the unique $\ast$-monoid structure on the terminal game $\H$. Then, we can convert the $\ast$-monoid $(\H, \rd_{\H \ast\H}\colon \H \ast \H \to \H)$ to a usual monoid by the following well-known fact.
\begin{fact}\revmemo{cite}
    Any lax monoidal functor $F\colon (\C, I_{\C}, \otimes_{\C})\to (\D,I_{\D},\otimes_{\D} )$ sends a monoid object in $\C$ to a monoid object in $\D$.
\end{fact}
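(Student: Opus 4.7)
The plan is to construct an explicit monoid structure on $FM$ from the lax monoidal data of $F$ and the monoid data of $M$, and then verify the monoid axioms by reducing them to the coherence axioms of $F$ combined with those of $M$.

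First I would unpack notation. A lax monoidal functor $F$ comes equipped with a natural family of comparison maps $\mu_{X,Y}\colon FX\otimes_\D FY \to F(X\otimes_\C Y)$ and a unit comparison $\eta\colon I_\D \to F(I_\C)$, satisfying the usual associativity hexagon and the two unit squares. Given a monoid $(M, m, e)$ in $\C$, I would define the candidate multiplication and unit on $FM$ by
\[
m_F \coloneqq F(m)\circ \mu_{M,M}, \qquad e_F \coloneqq F(e)\circ \eta,
\]
of types $FM\otimes_\D FM \to FM$ and $I_\D \to FM$ respectively.

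To verify associativity, I would paste the diagram comparing $m_F\circ (m_F\otimes \id_{FM})$ with $m_F\circ (\id_{FM}\otimes m_F)$ (modulo the associator of $\D$) out of three commutative subregions: (i) the hexagon axiom for $F$ applied to the triple $(M,M,M)$, which reconciles the two ways of bracketing iterated $\mu$'s against the associators of $\C$ and $\D$; (ii) two instances of the naturality of $\mu$ along $m$, one per slot, using the square $\mu_{M,M}\circ (F(m)\otimes \id_{FM}) = F(m\otimes \id_M)\circ \mu_{M\otimes M, M}$ and its mirror on the right slot; and (iii) the $F$-image of the associativity square for $m$ itself. The left and right unit laws reduce to an analogous but strictly smaller chase, combining the unit coherence squares of $F$ with the $F$-image of the unit triangles for $m$.

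The main obstacle is purely bookkeeping: the associators of $\C$ and of $\D$ must be threaded correctly through the hexagon for $F$, and each naturality square of $\mu$ attached at the right corner of the pasting. No real ideas beyond careful diagram pasting are required. Since this is a classical result appearing in standard references such as \cite{Maclane1998CWMcategories}, I would likely cite it rather than reproduce the full pasted diagram, which is the route the author's marginal memo seems to anticipate.
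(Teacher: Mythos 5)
Your proposal is correct: the construction $m_F = F(m)\circ\mu_{M,M}$, $e_F = F(e)\circ\eta$ and the verification via the coherence hexagon, naturality of $\mu$, and the $F$-image of the monoid axioms is exactly the standard argument. The paper itself offers no proof --- it states this as a known \emph{fact} with a marginal note to add a citation --- so your write-up simply supplies the routine diagram chase the author chose to elide.
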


By the assumption that $U\colon \Gs \to \Set$ is lax monoidal, we obtain the induced monoid whose multiplacation function is given by
\[
U\H\times U\H \xrightarrow{\mu_{\H,\H}} U(\H\ast \H) \xrightarrow{U(\rd_{\H\ast \H})} U\H,
\]
where $\mu_{\X,\Y}\colon U(\X)\times U(\Y)\to U(\X\ast \Y)$ denotes the comparison map of the lax monoidal functor $U$.
We will call this monoid structure on the set $\H$ \demph{the miniature monoid} of the monoidal structure $\ast$, and write $\rbmult \colon\H\times \H \to \H$ for the multiplication function.

\begin{example}[Miniture monoids for some monoidal structures]\label{exmp:rawBoutonMonoidConway}
    For the Conway addition $+$, the miniature monoid structure $+_{\H} $ on $\H$ is given by
    \[
    A+_{\H}  B = \{a+_{\H}  B \mid a\in A\} \cup \{A+_{\H}  b\mid b\in B\}.
    \]
    In usual context like \cite{siegel2013combinatorial}, this is just called the Conway addition.
    Similarly, the miniature monoid structure of $\lor$ (\Cref{exmp:SelctiveSum}) and $\land$ (\Cref{exmp:ConjunctiveSum}) are given by
\begin{align*}
    A \lor_{\H} B  &=  \{a\lor_{\H}  B \mid a\in A\} \cup \{a\lor_{\H}  b\mid a\in A, b\in B\} \cup \{A\lor_{\H}  b\mid b\in B\}\text{ and}\\
    A \land_{\H} B  &=  \{a\land_{\H}  b\mid a\in A, b\in B\}.
\end{align*}
\end{example}

\begin{lemma}[Miniature monoid reflects the ambient monoidal structure]\label{lem:miniatureMonoidReflectsTheMonoidalStructure}
For any pair of games $\X=(X,\rel_{\X})$ and $\Y=(Y,\rel_{\Y})$, we have the following equation in the miniature monoid:
    \[
    \rd_{\X\ast\Y}(x,y) = \rd_{\X}(x) \ast_{\H} \rd_{\Y}(y).
    \]
\end{lemma}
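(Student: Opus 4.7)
The plan is to reduce the identity to two standard facts: terminality of $\H$ in $\Gs$, and naturality of the comparison map $\mu$ of the lax monoidal functor $U\colon \Gs \to \Set$. The notation $(x,y) \in U(\X \ast \Y)$ in the statement is shorthand for $\mu_{\X,\Y}(x,y)$ (see \Cref{not:laxmonoidal}), and the right hand side $\rd_{\X}(x) \ast_{\H} \rd_{\Y}(y)$ unfolds, by the definition of the miniature monoid, to $U(\rd_{\H\ast\H})\bigl(\mu_{\H,\H}(\rd_\X(x),\rd_\Y(y))\bigr)$. So the claim becomes
\[
U(\rd_{\X\ast\Y})\bigl(\mu_{\X,\Y}(x,y)\bigr) \;=\; U(\rd_{\H\ast\H})\bigl(\mu_{\H,\H}(U(\rd_\X)(x),U(\rd_\Y)(y))\bigr).
\]

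First I would work at the level of morphisms in $\Gs$. The composite $\rd_{\H\ast\H}\circ(\rd_\X \ast \rd_\Y)\colon \X\ast\Y \to \H$ is a game morphism, so by terminality of $\H$ (\Cref{prop:TerminalGameAndInitialAlgebraAreHereditarilyFiniteSets}, together with \Cref{not:ReductionMap}) it must coincide with $\rd_{\X\ast\Y}$. Applying $U$ gives $U(\rd_{\X\ast\Y}) = U(\rd_{\H\ast\H})\circ U(\rd_\X \ast \rd_\Y)$.

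Next I would invoke naturality of $\mu$: for any morphisms $f\colon \X \to \X'$ and $g\colon \Y \to \Y'$ in $\Gs$, we have $U(f \ast g)\circ \mu_{\X,\Y} = \mu_{\X',\Y'}\circ (Uf \times Ug)$. Specializing to $f = \rd_\X$ and $g = \rd_\Y$, and precomposing with $(x,y)$, yields the desired equality once we chain it with the terminality equation from the previous step.

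There is essentially no deep obstacle here; the argument is a two-line diagram chase. The only place that calls for care is bookkeeping the abuse of notation in \Cref{not:laxmonoidal}: the element called $(x,y)$ on the left lives in $U(\X\ast \Y)$ via the comparison map, whereas on the right two separate applications of $\mu$ are used (one to pair the reductions, one implicit in the definition of $\ast_\H$). Making that explicit turns the identity into the naturality square for $\mu$ applied to $(\rd_\X, \rd_\Y)$, post-composed with $U(\rd_{\H\ast\H})$, which is exactly what terminality of $\H$ supplies.
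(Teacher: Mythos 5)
Your proposal is correct and follows essentially the same route as the paper: terminality of $\H$ gives $\rd_{\X\ast\Y} = \rd_{\H\ast\H}\circ(\rd_\X\ast\rd_\Y)$, and the naturality square of $\mu$ applied to $(\rd_\X,\rd_\Y)$, post-composed with $U(\rd_{\H\ast\H})$, yields the identity. Your explicit unfolding of the notational abuse in \Cref{not:laxmonoidal} is a fair bookkeeping of what the paper's two displayed diagrams encode.
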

\begin{proof}
    Since $\H$ is the terminal game, we have the following commutative diagram in $\Gs$:
    \begin{equation}\label{eq:LSCalgebra}
        \begin{tikzcd}
        &\X\ast \Y\ar[rd,"\rd_{\X\ast \Y}"],\ar[d,"\rd_{\X}\ast \rd_{\Y}"']&\\
        &\H\ast\H\ar[r,"\rd_{\H\ast \H}"']&\H.
    \end{tikzcd}
    \end{equation}
    By sending this diagram by $U$ and utilizing the naturality of $\mu$, we have the following commutative diagram in $\Set$:
    \[
    \begin{tikzcd}
        X\times Y=U\X\times U\Y \ar[d,"U\rd_{\X}\times U\rd_{\Y}"']\ar[r,"\mu_{\X,\Y}"]&U(\X\ast \Y)\ar[rd,"U(\rd_{\X\ast \Y})"],\ar[d,"U(\rd_{\X}\ast \rd_{\Y})"']&\\
        \H \times \H=U\H \times U\H\ar[r,"\mu_{\H,\H}"]\ar[rr, bend right , "\ast_{\H}"']&U(\H\ast\H)\ar[r,"U(\rd_{\H\ast \H})"]&U(\H)=\H.
    \end{tikzcd}
    \]
    This completes the proof.
\end{proof}

\subsubsection{Step 2: Minimum monoid factorization}\label{sssec:MinimumMonoidFactorization}
The miniture monoid $(\H, \ast_{\H})$ depends only on the monoidal structure $\ast$, and is independent of the target game value $\val$. So we want to minimize $\H$ keeping the ability to calculate the target game value $\val$. We introduce a very simple gadget for minimizing a monoid. See \Cref{app:MinimumMonoidFactorization} for related contexts, examples, and the proof of the existence.
\begin{definition}[Minimum monoid factorization]\label{def:minimumMonoidFactorization}
    For a monoid $M$ and a function $f\colon M \to A$ to a set $A$, the \demph{minimum monoid factorization} of $f\colon M \to A$ is a monoid $M_f$ equipped with a factorization of the function $f$ into a monoid homomorphism $q_f\colon M \to M_f$ followed by a function $a_f\colon M_f\to A$
    \[
    \begin{tikzcd}
        M\ar[rr, "f"', bend right]\ar[r, "q_f"]&M_f\ar[r, "a_f"]&A
    \end{tikzcd}
    \]
    that satisfies the following minimality condition.
    
    \begin{description}
        \item[Minimum] For any monoid $M'_f$ equipped with a factorization $f= M \xrightarrow{q'_f} M'_f \xrightarrow{a'_f} A$ into a monoid homomorphism $q'_f \colon M \to M'_f$ followed by a function $a'_f\colon M'_f \to A$, there exsits a surjective monoid homomorphism $q\colon \bullet \twoheadrightarrow M_f$ from a submonoid 
    $\bullet \subset M'_f$
    such that the following diagram commutes.
    \[
    \begin{tikzcd}[row sep=10pt]
            &\bullet\ar[r, rightarrowtail, "\subset"]\ar[dd, twoheadrightarrow, "q"]&M'_f\ar[rd, "a'_f"]&\\
        M\ar[ru, "q'_f"]\ar[rd, "q_f"']&&&A\\
            &M_f\ar[rru, "a_f"', bend right =17pt]&&
    \end{tikzcd}
    \]
    \end{description}
\end{definition}
One can check that the minimum monoid factorization always uniquely exists (up to canonical isomorphisms. See \Cref{prop:UniqueExistenceOfMinimumMonoidFactorization} for a detailed proof and a construction. If one considers a non-unital monoidal structure (\Cref{rmk:nonUnital}), one can use the semigroup version in \Cref{rem:minimumAlgebraFactorization}.

Let us return to the context of the Bouton monoid.
Since $\H$ has the miniture monoid structure (\Cref{sssec:MinitureMonoid}), we can consider the minimum monoid factorization of the function $\val_{\H}\colon \H \to A$. 
Let $\H \xrightarrow{q_{\val_\H}}\B\xrightarrow{a_{\val_{\H}}} A$ be the minimum monoid factorization of the function $\val_{\H}\colon \H \to A$ with respect to the miniature monoid structure $\ast_{\H}\colon \H \times \H \to \H$. 
Let $\circledast \colon \B \times \B \to \B$ denote the induced monoid structure on $\B$.
For any game $\X=(X, \str)$, due to \Cref{lem:GameValuesCharacterization}, we obtain the following decomposition of the function $\val_{\X}\colon X \to A$.
\[
\begin{tikzcd}
    X\ar[r, "\rd_{\X}"] \ar[rrr, bend right =50, "\val_{\X}"']&\H\ar[r, "q_{\val_{\H}}", twoheadrightarrow]\ar[rr, bend right , "\val_{\H}"']&\B\ar[r, "a_{\val_{\H}}"]&A
\end{tikzcd}
\]
We define $\Gb_{\X}\colon X \to \B$ by $\Gb_{\X}\coloneqq q_{\val_\H} \circ \rd_{\X}$.
\[
\begin{tikzcd}
    X\ar[r, "\rd_{\X}"] \ar[rrr, bend right =50, "\val_{\X}"']\ar[rr, "\Gb_{\X}", bend left=40]&\H\ar[r, "q_{\val_{\H}}", twoheadrightarrow]\ar[rr, bend right , "\val_{\H}"']&\B\ar[r, "a_{\val_{\H}}"]&A.
\end{tikzcd}
\]
The family of functions $\Gb=\{\Gb_{\X}\colon X \to \B\}_{\X=(X,\str)\textrm{: game}}$ is a game value, also due to \Cref{lem:GameValuesCharacterization} since it factors through $\rd=\{\rd_{\X}\colon X \to \H\}_{\X=(X,\str)\textrm{: game}}$.

We will prove that the triple of the game value $\Gb=\{\Gb_{\X}\colon X \to \B\}_{\X=(X,\str)\textrm{: game}}$, the function $a_{\val_{\H}}\colon \B\to A$, and the monoid structure $\circledast \colon \B \times \B \to \B$, all defined just above, provides the Bouton monoid for $\ast$ and $\val$.
Regarding the first condition \dq{(1) Informative}, the required equation $\val_{\X}=a_{\val_{\H}} \circ \Gb_{\X}$ obviously holds by definition.
Regarding the second condition \dq{(2) Decomposable}, we have
    \[\Gb_{\X\ast \Y}(x,y) = q_{\val_{\H}}(\rd_{\X\ast \Y}(x,y))=q_{\val_{\H}}(\rd_{\X}(x)\ast_{\H} \rd_{\Y}(y))= q_{\val_{\H}}(\rd_{\X}(x))\circledast q_{\val_{\H}}(\rd_{\Y}(y))=
\Gb_{\X}(x) \circledast \Gb_{\Y}(y),\]
where we use \Cref{lem:miniatureMonoidReflectsTheMonoidalStructure} at the second equation. 

Therefore, in order to prove the existence theorem (\Cref{thm:existenceTheoremOfBoutonMonoid}), it suffices to check that these data $(\B, \Gb,a_{\val_{\H}} ,\circledast)$ satisfies the third condition \dq{(3) Minimum.} Assuming that the same type of data $(\B', \Gb'=\{\Gb'_{\X}\colon X \to \B'\}, a'\colon \B' \to A, \circledast' \colon \B'\times \B' \to \B')$ satisfies the first two conditions in \Cref{def:BoutonMonoidAsUniversal}, we prove that there exsits a surjective monoid homomorphism $q\colon \bullet \twoheadrightarrow \B$ from a submonoid 
    $\bullet \subset\B'$
    such that the following diagram
    \[
    \begin{tikzcd}[row sep=10pt]
            &\bullet\ar[r, rightarrowtail, "\subset"]\ar[dd, twoheadrightarrow, "q"]&\B'\ar[rd, "a'"]&\\
        X\ar[ru, "\Gb'_{\X}"]\ar[rd, "\Gb_{\X}"']&&&A\\
            &\B\ar[rru, "a_{\val_{\H}}"', bend right =17pt]&&
    \end{tikzcd}
    \]
    commutes for any game $\X=(X,\str)$.

The first observation is that the function $\Gb'_{\H}\colon \H \to \B'$ is a monoid homomorphism since the following diagram commutes. The commutativity of the top part is the definition of $\ast_{\H}$, the commutativity of the left-hand trapezoid follows from the \dq{(2) Decomposable} assumption on $\circledast'$, and the commutativity of the right-hand triangle follows since $\Gb'$ is a game value. 
\[
    \begin{tikzcd}
        \H \times \H=U\H \times U\H\ar[r,"\mu_{\H,\H}"]\ar[rr, bend left , "\ast_{\H}"]\ar[d, "\Gb'_{\H} \times \Gb'_{\H}"]&U(\H\ast\H)\ar[r,"U(\rd_{\H\ast \H})"]\ar[rd, "\Gb'_{\H\ast \H}"]&U(\H)=\H\ar[d, "\Gb'_{\H}"]\\
        \B'\times\B'\ar[rr, "\circledast'"]&&\B'
    \end{tikzcd}
    \]
    Therefore, the universality of $\B$ as the minimum factorization monoid implies that there exists a surjective monoid homomorphism $q\colon \bullet \twoheadrightarrow \B$ from a submonoid $\bullet \subset \B'$ such that the following diagram commutes.
    \[
    \begin{tikzcd}[row sep=10pt]
            &\bullet\ar[r, rightarrowtail, "\subset"]\ar[dd, twoheadrightarrow, "q"]&\B'\ar[rd, "a'"]&\\
        \H\ar[ru, "\Gb'_{\H}"]\ar[rd, "\Gb_{\H}"']&&&A\\
            &\B.\ar[rru, "a_{\val_{\H}}"', bend right =17pt]&&
    \end{tikzcd}
    \]
    Therefore, \Cref{lem:GameValuesCharacterization} implies that
    \[
    \begin{tikzcd}[row sep=10pt]
            &&\bullet\ar[r, rightarrowtail, "\subset"]\ar[dd, twoheadrightarrow, "q"]&\B'\ar[rd, "a'"]&\\
        X\ar[rru, bend left=20, "\Gb'_{\X}"]\ar[rrd, bend right=20, "\Gb_{\X}"']\ar[r, "\rd_{\X}"]&\H\ar[ru, "\Gb'_{\H}"]\ar[rd, "\Gb_{\H}"']&&&A\\
            &&\B.\ar[rru, "a_{\val_{\H}}"', bend right =17pt]&&
    \end{tikzcd}
    \]
    commutes for any game $\X=(X,\str)$. This completes the proof of \Cref{thm:existenceTheoremOfBoutonMonoid}.

\subsubsection{Examples of Bouton monoids}
We conclude this section by giving several examples of the Bouton monoids.
\begin{proposition}[A categorical characterization of Nim-sum]\label{prop:CategoricalCharacterizationOfNim-Sum}
    The monoid of nim-sum $(\N, 0, \nsum)$ is isomorphic to the Bouton monoid $\B_{{+}, \oc}$ of the Conway addition ${+}$ and the outcome $\oc$.
\end{proposition}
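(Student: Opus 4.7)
The plan is to invoke the uniqueness of minimum monoid factorizations (\Cref{prop:UniqueExistenceOfMinimumMonoidFactorization}) and show that $(\N, \nsum)$, together with $\G_{\H}\colon \H \to \N$ and the $0$-detector $a\colon \N \to \{N,P\}$ (mapping $0 \mapsto P$ and everything nonzero to $N$), is a minimum monoid factorization of $\oc_{\H}\colon \H \to \{N,P\}$ through $(\H, +_{\H})$. By the construction in \Cref{sssec:MinimumMonoidFactorization}, this identifies $(\N, \nsum)$ with the Bouton monoid $\B_{+, \oc}$.

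First I would check that $(\N, \nsum, \G_{\H}, a)$ is a factorization. \Cref{prop:GrundynumberIsMoreInformativeThanOutcome} gives $a \circ \G_{\H} = \oc_{\H}$. To see that $\G_{\H}$ is a monoid homomorphism from $(\H, +_{\H})$ to $(\N, \nsum)$, I combine \Cref{lem:miniatureMonoidReflectsTheMonoidalStructure} with the nim-sum rule:
\[
\G_{\H}(h_1 +_{\H} h_2) \;=\; \G_{\H}\bigl(\rd_{\H+\H}(h_1,h_2)\bigr) \;=\; \G_{\H+\H}(h_1,h_2) \;=\; \G_{\H}(h_1) \nsum \G_{\H}(h_2),
\]
where the second equality uses that $\G$ is a game value and $\rd_{\H+\H}$ is a game morphism, and the third is \Cref{thm:GeneralizedBoutonTheoremNimSumRule}. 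Surjectivity of $\G_{\H}$ onto $\N$ follows from $\G_{\H} \circ \rd_{\Nim{1}} = \G_{\Nim{1}} = \id_{\N}$ (using \Cref{exmp:vonNeumann} and \Cref{exmp:AnalysisOfNim}).

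The substantive work is verifying minimality. Given any competing factorization, i.e., a monoid $(M', \odot)$ with a monoid homomorphism $q'\colon (\H, +_{\H}) \to M'$ and a function $a'\colon M' \to \{N,P\}$ satisfying $a' \circ q' = \oc_{\H}$, I would take the image submonoid $S := q'(\H) \subset M'$ and define $q\colon S \to \N$ by $q(q'(h)) := \G_{\H}(h)$. The key technical point, which I expect to be the main obstacle, is the well-definedness of $q$: one must show that $q'(h_1) = q'(h_2)$ forces $\G_{\H}(h_1) = \G_{\H}(h_2)$. To see this, multiply the assumed equality on the right by $q'(h_1)$ in $M'$, use that $q'$ is a monoid homomorphism on $(\H, +_{\H})$, and apply $a'$ to obtain $\oc_{\H}(h_1 +_{\H} h_1) = \oc_{\H}(h_2 +_{\H} h_1)$; the left side is $P$ since $\G_{\H}(h_1) \nsum \G_{\H}(h_1) = 0$, so the right side is $P$ as well, forcing $\G_{\H}(h_2) \nsum \G_{\H}(h_1) = 0$ and hence $\G_{\H}(h_1) = \G_{\H}(h_2)$.

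The remaining steps are routine: surjectivity of $q$ onto $\N$ follows from that of $\G_{\H}$; the identity $q(s_1 \odot s_2) = q(s_1) \nsum q(s_2)$ for $s_i = q'(h_i)$ is the same nim-sum rule computation as above; the commutativities $a \circ q = a'|_{S}$ and $q \circ q'|_{\H} = \G_{\H}$ are immediate from the definitions; and the extension of the commuting diagram from $\H$ to an arbitrary game $\X$ is automatic via \Cref{lem:GameValuesCharacterization}, because every $\Gb$-type family factors through the reduction $\rd_{\X}\colon \X \to \H$. Conceptually, the whole minimality argument pivots on the involutive identity $n \nsum n = 0$ meeting the game-theoretic fact $\oc_{\H}(h +_{\H} h) = P$, which is precisely the structural reason why the nim-sum --- and not some other abelian group structure on $\N$ --- is what gets forced by pairing the Conway addition with the outcome function.
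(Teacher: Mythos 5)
Your proposal is correct, and it is essentially the argument the paper intends: the paper gives no explicit proof, but \Cref{ssec:missingPiece} indicates that the proof is to combine the construction of $\B_{+,\oc}$ as the minimum monoid factorization of $\oc_{\H}$ over $(\H,+_{\H})$ with \Cref{thm:GeneralizedBoutonTheoremNimSumRule} and \Cref{prop:GrundynumberIsMoreInformativeThanOutcome}, which is exactly what you do, with the involution $n\nsum n=0$ doing the work in the well-definedness step. The only (trivial) items left implicit are the unit checks ($\G_{\H}(\emptyset)=0$ and preservation of units by $q'$), which cause no difficulty.
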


\begin{example}[$\B_{\lor, \oc}$]
    The Bouton monoid $\B_{\lor, \oc}$ of the selective sum (\Cref{exmp:SelctiveSum}) and the outcome is given by $\B_{\lor, \oc}=\{P,N\}$ as essentially pointed out in \cite{smith1966graphs}. The binary operation $\B_{\lor, \oc} \times \B_{\lor, \oc} \to \B_{\lor, \oc}$ is given by $\min$ with respect to the order $N<P$.
\end{example}

\begin{example}[$\B_{\land, \oc}$]\label{exmp:RemotenessAsBoutonMonoid}
    The Bouton monoid of the conjunctive sum $\land$ (\Cref{exmp:ConjunctiveSum}) and the outcome $\oc$ is known to be remoteness (\cite{smith1966graphs}). 
    \[
    \B_{\land, \oc} =\{P_{2k}\mid k\in\mathbb{N}\} \cup \{N_{2k+1}\mid k\in\mathbb{N}\}
    \]
    Here, as the considered monoidal structure $\land$ is non-unital, the resulting Bouton \dq{monoid} $\B_{\land, \oc}$ is also non-unital, i.e., it is just a semigroup (\Cref{rmk:nonUnital}). In fact, the binary operation is given by $\min$ with respect to the order $P_0 <N_1<P_2<N_3<P_4< \dots$, which does not admit a neutral element. The minimality of the Bouton monoid can be checked by considering $\ElM$ (\Cref{exmp:EffeuillerLaMarguerite}, \Cref{exmpl:remotenessAndEffeuillerLaMarguerite}).
\end{example}

    
An example of a game-theoretically motivated Boouton monoid whose target game value is not $\oc$ is mentioned in \Cref{sssec:BoutonMonoidCircularNim} (without being able to give an explicit description).

\revmemo{misere quotient}

\section{Concluding remarks and open questions}\label{sec:OpenQuestions}
As there are still a lot of things to be calculated, we list some of the remaining questions with related observations.
\subsection{The Missing Piece: differential structures of games}\label{ssec:missingPiece}
While our paper defined the Bouton monoid and described an abstruct construction (\Cref{ssec:SynthesizingWithBoutonMonoid}), there remains the crucial part --- the concrete construction of the Bouton monoid. For example, the proof of \Cref{prop:CategoricalCharacterizationOfNim-Sum} relies on \Cref{thm:GeneralizedBoutonTheoremNimSumRule}. In that sense, it remains unsatisfactory as a true ``demystification'' of the nim-sum.

What we should demystify is the proof of \Cref{thm:GeneralizedBoutonTheoremNimSumRule}. 
The only non-trivial part of the proof is the following equation regarding the nim-sum $\nsum$ and the mex operator:\revmemo{cite}
\begin{equation}\label{eq:recursivedefinitionOfNimSum}
    \forall S,T\in \Pf(\N),\; \mex(S)\nsum \mex(T) = \mex(S\nsum \mex(T) \cup \mex(S)\nsum T),
\end{equation}
where we adopt the notion $n\nsum S\coloneqq\{n\nsum s\mid s\in S\}$ for $n\in \N$ and $S\in \Pf(\N)$. Furthermore, this equation recursively determines the value of the nim-sum since $n\nsum m$ is deteremined by choosing $S=\{0,1, \dots, n-1\}$ and $T=\{0,1, \dots, m-1\}$. \invmemo{This should be related to the section-technique since $S,T$ are chosen to be $v(n),v(m)$.}

The equation (\Cref{eq:recursivedefinitionOfNimSum}) is reminiscent of the integral version of the Leibniz rule, called the \demph{Rota-Baxter equation}
\[
\textstyle
\int f \times \int g =\int\left(f\times \int g + \int f \times g\right)
\]
by regarding the sets $S,T$ as functions, $\nsum$ as products, and $\mex$ as integration. This ``Rota-Baxter equation" of the nim-sum is a reflection of a more trivial ``Leibniz rule" of the Conway addition:
\[
\str_{\X+\Y}(x,y) = \str_{\X}(x)\times\{y\} \cup \{x\}\times\str_{\Y}(y),
\]
which resembles the genuine Leibniz rule $\partial(f\times g) = \partial f \times g + f\times \partial g$. Note that we see here a kind of the twistedness in Diagram \ref{eq:GrundyNumberDiagram} as the \dq{differential part} $\str$ goes in the opposite direction of the \dq{integral part} $\mex$.

Expecting the remaining equation (\Cref{eq:recursivedefinitionOfNimSum}) is actually a reflection of some differential structure on $(\Gs, +)$, we ask the first question.
\begin{question}
    How do these ``differential structures" appear in our formulation, especially for the Bouton monoids $\B_{\val,+}$ with respect to the Conway addition?
\end{question}
Let us describe a partial solution to this question:
\begin{proposition}[Rota-Baxter equation for the Bouton monoids w.r.t. the Conway addition]
    Let $\val$ be a game value. If the Bouton monoid $\{\Gb_{\X}\colon X\to\B_{\val, +}\}_{\X=(X, \str):\text{ game}}$ of $\val$ with respect to the Conway addition $+$ is induced by a $\Pf$-algebra structure $(\B_{\val, +}, \beta\colon\Pf(\B_{\val, +})\to \B_{\val, +})$, then the monoid structure $\circledast\colon \B_{\val, +}\times \B_{\val, +}\to \B_{\val, +}$ satisfies the following Rota-Baxter equation:
    \[
    \beta(S)\circledast \beta(T) = \beta(S\circledast \beta(T) \cup \beta(S)\circledast T)
    \]
    for any $S,T \in \Pf(\B_{\val, +})$.
\end{proposition}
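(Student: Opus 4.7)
The plan is to compute $\Gb_{\X+\Y}(x,y)$ in two different ways for arbitrary games $\X,\Y$, match them, and then realize every pair $(S,T)\in\Pf(\B_{\val,+})\times\Pf(\B_{\val,+})$ as a pair of neighbourhood-images. The hypothesis that $\Gb$ is induced by $(\B_{\val,+},\beta)$ is equivalent to saying that each $\Gb_{\X}$ is the hylomorphism from $\X$ to $(\B_{\val,+},\beta)$, so by the coalgebra-algebra morphism condition we have the recursion
\[
\Gb_{\X}(x)\;=\;\beta\bigl(\{\Gb_{\X}(x')\mid x\rel_{\X}x'\}\bigr)
\]
for every game $\X=(X,\rel_{\X})$ and every $x\in X$.

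Fix games $\X,\Y$ and states $x\in X$, $y\in Y$, and set $S_x:=\{\Gb_{\X}(x')\mid x\rel_{\X}x'\}$ and $T_y:=\{\Gb_{\Y}(y')\mid y\rel_{\Y}y'\}$. Decomposability (condition (2) of \Cref{def:BoutonMonoidAsUniversal}) and the recursion give
\[
\Gb_{\X+\Y}(x,y)\;=\;\Gb_{\X}(x)\circledast\Gb_{\Y}(y)\;=\;\beta(S_x)\circledast\beta(T_y).
\]
Alternatively, applying the recursion at $(x,y)\in\X+\Y$, using $\str_{\X+\Y}(x,y)=\{(x',y)\mid x\rel_{\X}x'\}\cup\{(x,y')\mid y\rel_{\Y}y'\}$ from \Cref{def:ConwayAddition}, and then peeling off decomposability inside the argument of $\beta$ yields
\[
\Gb_{\X+\Y}(x,y)\;=\;\beta\bigl(\{\Gb_{\X}(x')\circledast\Gb_{\Y}(y)\mid x\rel_{\X}x'\}\cup\{\Gb_{\X}(x)\circledast\Gb_{\Y}(y')\mid y\rel_{\Y}y'\}\bigr)\;=\;\beta\bigl(S_x\circledast\beta(T_y)\cup\beta(S_x)\circledast T_y\bigr).
\]
Equating the two expressions proves the Rota-Baxter identity for every pair $(S_x,T_y)$ of the form considered.

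It remains to realize every $(S,T)\in\Pf(\B_{\val,+})\times\Pf(\B_{\val,+})$ in this form. By the construction in \Cref{sssec:MinimumMonoidFactorization} we have $\Gb_{\H}=q_{\val_{\H}}\colon\H\to\B_{\val,+}$, and the minimality clause of \Cref{def:minimumMonoidFactorization} forces this monoid homomorphism to be surjective: the factorization of $\val_{\H}$ through the submonoid $\Image(q_{\val_{\H}})\subseteq\B_{\val,+}$ would otherwise contradict minimality, since the inclusion $\Image(q_{\val_{\H}})\hookrightarrow\B_{\val,+}$ is the only candidate for the required surjection onto $\B_{\val,+}$. Hence for each $b\in S$ we may pick $h_b\in\H$ with $\Gb_{\H}(h_b)=b$; distinct $b$'s force distinct $h_b$'s. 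Since $\rel_{\H}$ is the membership relation (\Cref{DefinitionUniversalGame}), the options of the hereditarily finite set $h_S:=\{h_b\mid b\in S\}\in\H$ are exactly the chosen $h_b$'s, and $\{\Gb_{\H}(h_b)\mid b\in S\}=S$. Performing the same construction for $T$ and applying the preceding calculation with $\X=\Y=\H$, $x=h_S$, $y=h_T$ delivers the Rota-Baxter equation for arbitrary $S,T\in\Pf(\B_{\val,+})$. The only delicate point is the surjectivity of $q_{\val_{\H}}$; everything else is a direct double computation, whose engine is the Leibniz-like combinatorics of $\str_{\X+\Y}$.
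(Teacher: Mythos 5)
Your proof is correct and follows essentially the same route as the paper's: both exploit the surjectivity of $\Gb_{\H}$ to realize arbitrary $S,T$ as images of (option sets of) hereditarily finite sets, and then expand $\Gb$ of a Conway sum in two ways --- decomposability (equivalently, the monoid homomorphism property of $\Gb_{\H}$) on one side, and the $\beta$-recursion together with the Leibniz-type formula for the sum's neighbourhood function on the other. The paper performs this double computation directly inside the terminal game $\H$, using the miniature monoid formula $A+_{\H}B=\{a+_{\H}B\mid a\in A\}\cup\{A+_{\H}b\mid b\in B\}$ and citing surjectivity of $q_{\val_{\H}}$ from its quotient construction, whereas you first establish the identity for all pairs of option-images in arbitrary games and rederive surjectivity from the minimality clause; these are cosmetic rearrangements of the same argument.
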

\begin{proof}
In this proof, the image of a subset $S\subset X$ by a function $f\colon X \to Y$ is denoted by $f[S]\subset Y$.
    Since the function $\Gb_\H\colon \H \to \B_{\val, +}$ is surjective by the consturction, for any $S,T \in \Pf(\B_{\val, +})$, we can take $A,B \in \Pf(\H)$ such that $\Gb_\H[A]=S$ and $\Gb_\H[B]=T$. By the assmption that the game value $\Gb$ is induced by a $\Pf$-algebra structure $(\B_{\val, +}, \beta\colon\Pf(\B_{\val, +})\to \B_{\val, +})$, we have $\Gb_{\X}(x)=\beta(\Gb_{\X}[\str_{\X}(x)])$, in particular, we have $\Gb_{\H}(A)=\beta(\Gb_{\H}[A])=\beta(S)$ and $\Gb_{\H}(B) =\beta(T)$.
    As $\Gb_{\H}$ is a monoid homomorphism by the construction, we have
    \[
    \Gb_{\H}(A)\circledast\Gb_{\H}(B)= \Gb_{\H}(A+_{\H}B).
    \]
    The left-hand side is equal to $\beta(S)\circledast\beta(T)$.
    The right-hand side is calculated as
    \begin{align*}
        \Gb_{\H}(A+_{\H}B) =& \beta\left(\Gb_{\H}[\{a+_{\H}B\mid a\in A\}\cup \{A+_{\H} b\mid b\in B\}]\right)\\
        =& \beta(\{\Gb_{\H}(a+_{\H}B)\mid a\in A\}\cup \{\Gb_{\H}(A+_{\H} b)\mid b\in B\})\\
        =& \beta(\{\Gb_{\H}(a)\circledast\Gb_{\H}(B)\mid a\in A\}\cup \{\Gb_{\H}(A)\circledast \Gb_{\H}(b)\mid b\in B\})\\
        =& \beta(\Gb_{\H}[A]\circledast\Gb_{\H}(B)\cup \Gb_{\H}(A)\circledast \Gb_{\H}[B])\\
        =& \beta(S\circledast\beta(T)\cup \beta(S)\circledast T).
    \end{align*}
    This completes the proof.
\end{proof}

\revmemo{Is this related to the existing categorifications of differential structures?}

\subsection{Partisan games}\label{ssec:PartisanGames}
What we called games in this paper is usually called impartial games. We intentionally choose to concentrate on impartial games since our central motivation is to demystify the nim-sum. However, in many different contexts, partisam games, games in which a Left-player and a Right-player have different options, are of interest.

Instead of considering $\Pf\colon \Set \to \Set$, we can alternatively consider $\Pf\times \Pf\colon \Set \to \Set$ to obtain the parallel theory of ``partisan games as reursive coalgebras" and to mimick all the abstract nonsenses we used in this paper. (A similar idea of using coalgebras of $\Pow \times \Pow$ to study partisan games has already appeared in \cite{honsell2009conway}.)
\begin{question}
    To which extent the theory of partisan games are comprehended by replacing $\Pf$ by $\Pf\times \Pf$?
\end{question}

For example, we can capture the definition of (short and partisan) Conway games, which is recursively defined as a pair of finite sets of Conway games, as follows:
\begin{proposition}
    The terminal recursive $\Pf\times \Pf$-coalgebra is given by the set $\mathbb{G}$ of all (short and partisan) Conway games. 
\end{proposition}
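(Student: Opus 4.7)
The plan is to mimic the argument for the terminal game $\H$ given in Section 3.6, replacing $\Pf$ by $F := \Pf \times \Pf$ throughout. The functor $F \colon \Set \to \Set$ sends $X$ to $\Pf(X) \times \Pf(X)$, and it is finitary because both factors are (products of finitary functors are finitary, since filtered colimits commute with finite products in $\Set$). Hence Adamek's fixed point theorem applies and the initial $F$-algebra is computed as the colimit in $\Set$ of the chain
\[
\emptyset \;\to\; F(\emptyset) \;\to\; F^2(\emptyset) \;\to\; F^3(\emptyset) \;\to\; \cdots
\]
Unwinding this colimit, its elements are precisely pairs $(L, R)$ of finite sets of elements obtained at earlier stages, which is exactly the recursive definition of a short partisan Conway game. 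So this colimit is canonically identified with $\mathbb{G}$, and the structure map $F(\mathbb{G}) \to \mathbb{G}$ is the identity $\Pf(\mathbb{G}) \times \Pf(\mathbb{G}) = \mathbb{G}$, sending $(L, R)$ to the Conway game $\{L \mid R\}$.

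Once $\mathbb{G}$ is identified as the initial $F$-algebra, the statement then follows from the analog of Proposition 2.13 (initial algebra $=$ terminal recursive coalgebra), whose proof carries over verbatim: by Lambek's lemma the structure map $F(\mathbb{G}) \to \mathbb{G}$ is an isomorphism, and its inverse endows $\mathbb{G}$ with a coalgebra structure that is both recursive and terminal among recursive $F$-coalgebras. The recursiveness follows from the fact that for any $F$-algebra $(A, \alpha)$ the unique algebra homomorphism $\mathbb{G} \to A$ is precisely the unique coalgebra-algebra morphism, and terminality among \emph{recursive} coalgebras is obtained by the same diagram chase as in the $\Pf$-case.

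The main (and essentially only) obstacle is to verify rigorously that the colimit cocone above is identified with the set $\mathbb{G}$ as standardly defined by Conway's recursion. This is a bookkeeping step: one needs to check that the canonical injections $F^n(\emptyset) \hookrightarrow F^{n+1}(\emptyset)$ are injective (so the colimit is a well-defined union) and that the $n$-th stage consists exactly of the games of birthday less than $n$. Both are proved by a routine induction on $n$, using that $\Pf$ preserves monomorphisms. All other ingredients — finitariness of $F$, Adamek's theorem, the Lambek--Capretta correspondence — are already invoked in the paper for the impartial case and apply without modification.
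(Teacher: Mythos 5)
Your argument is correct and is exactly the intended one: the paper states this proposition without proof, relying on the reader mimicking the construction of the terminal game $\H$, and your proposal does precisely that (finitariness of $\Pf\times\Pf$, Adamek's chain identifying the initial algebra with $\mathbb{G}$ via $(L,R)\mapsto\{L\mid R\}$, then the Lambek--Capretta correspondence, which in the paper is already stated for an arbitrary endofunctor and so applies directly rather than needing to be "carried over"). No gaps.
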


\subsection{Classification problem of monoidal structures}\label{ssec:ClassificationProblemOfMonoidalStructures}
In order to fathom the applicativity of \Cref{thm:existenceTheoremOfBoutonMonoid}, we would like to classify monoidal structures on $\Gs$. The first question to be asked is the most restrictive one:
\begin{question}
    Classify all symmetric mononoidal closed structures on $\Gs$ such that $U\colon \Gs \to \Set$ is strictly monoidal.
\end{question}
Notice that the closedness assumption is redundant due to \Cref{Cor:closedness}. 
A similar research has been recently done for graphs \cite{kapulkin2024closed}. See also \cite{foltz1980algebraic}.

If there are infinitely many examples, we obtain infinitely many applications of \Cref{thm:existenceTheoremOfBoutonMonoid}, and if there are only finitely many examples, we obtain a finite list of all ``nicely behaved notions of game-sums," which may be of independent interest.

As \Cref{thm:existenceTheoremOfBoutonMonoid} works also for a non-unital monoidal structure, the loosest classification problem regarding  \Cref{thm:existenceTheoremOfBoutonMonoid} is the following one.
\begin{question}
    Classify all non-unital mononoidal structures on $\Gs$ such that $U\colon \Gs \to \Set$ is lax monoidal.
\end{question}

\subsection{Other questions}
In the rest of this section, we will list some problems, which might be of interest for some readers.
\subsubsection{The closed structure}\label{sssec:ClosedStructure}
One can check that the Conway addition structure is symmetric monoidal closed (\Cref{Cor:closedness}) by an anstract argument. 
\begin{question}[The closed structure]
    What is the internal hom with respect to the Conway addition?
\end{question}
In the classical context (including \cite{joyal1977remarques}), the closedness plays the key role of switching the two players. It makes their category of games compact closed, which categorifies the classical ordered abelian group of Conway games!
The closed structure might be better understood when we consider the partisan games $\RecCoalg{\Pf\times \Pf}$ (see \Cref{ssec:PartisanGames}).

\revmemo{
\begin{question}
    Are the Bouton functions always induced by a $\Pf$-algebra structure?
\end{question}
}

\subsubsection{Other types of Bouton monoid: towards circular nim}\label{sssec:BoutonMonoidCircularNim}
One of the motivating phenomena of this research is an open problem in combinatorial game theory: \demph{the circular nim} \cite{dufour2013circular}. In circular nim $\mathrm{CN}(n,k)$ for $0<k\leq n$, $n$ piles are arranged on a circle. At each move, a player selects $k$ consecutive piles and removes a total of at least one stone from them. 
The winning strategy for $\mathrm{CN}(6,2)$ remains an open question \cite[Section 3]{dufour2013circular}.
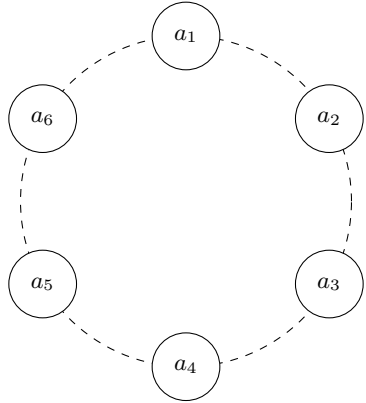
\begin{figure}[ht]
    \centering
    \begin{tikzpicture}[x=1cm,y=1cm, every node/.style={font=\small}]
  \def\r{2.2} 

  \draw[dashed] (0,0) circle (\r);

  \foreach \i [count=\k from 0] in {1,...,6}{
    \node[circle,draw,minimum size=9mm, fill=white] (P\i) at ({90-60*\k}:\r) {$a_{\i}$};
  }
\end{tikzpicture}
    \caption{Circular nim}
    \label{fig:circularNim}
\end{figure}

But how is $\mathrm{CN}(6,2)$ related to the Bouton monoid? By an easy argument, one can observe that $\mathrm{CN}(6,2)$ contains $\Nim{1}+(\Nim{1}\lor (\Nim{1}+\Nim{1}))$ as its subgame.
\begin{align*}
    \Nim{1}+(\Nim{1}\lor (\Nim{1}+\Nim{1})) &\rightarrowtail \mathrm{CN}(6,2)\\
    (a_1, a_4, a_3, a_5) &\mapsto (a_1, 0, a_3, a_4, a_5, 0)
\end{align*}
Therefore, as a subproblem of $\mathrm{CN}(6,2)$, we need to understand the winning strategy of $\Nim{1}+(\Nim{1}\lor (\Nim{1}+\Nim{1}))$, which also remains an open question, to the best of the author’s knowledge. In order to decompose the nested structure of the game, one na\"{i}ve idea is to understand the nested Bouton monoid
\[
\B_{\lor, \B_{+,\oc}} = \B_{\lor, \G}.
\]

\begin{question}
    What is the Bouton monoid $\B_{\lor, \G}$? Is it just the miniture monoid $(\H, \lor_{\H})$? 
\end{question}
If it is a proper quotient of $(\H, \lor_{\H})$, it might be a useful tool to tackle $\mathrm{CN}(6,2)$; otherwise, it at least captures one mathematical aspect of the difficulty of $\mathrm{CN}(6,2)$. Let us say a game value $\val$ is \demph{essentially indecomposable} by a monoidal structure $\ast$ if $\Gb_{\H}\colon \H \to \B_{\ast,\val}$ is bijective. Then, we can ask: is the Grundy number essentially indecomposable by the selective sum? This kind of question might serve as a passive approach to a difficult game in order to mathematically state one aspect of its difficulty.

\revmemo{It's quite natural to consider a multiplayer game, probability game, 2-turns/1-turn game, and other variants of games. And might be dealt with using some appropriate algebras. Furthermore, can we consider other graph data, like entropy?  I know there is a notion of the temperature of a game. Is it an example of this framework?}


\subsubsection{Other endofunctors}\label{sssec:OtherFunctors}
In this paper, we have concentrated on the endofunctor $\Pf\colon \Set \to \Set$ by restraining our categorical desire to generalize it.
As we used only simple abstract nonsense, our argument should be able to be categorically generalized.
\begin{question}
    To what extent is our framework categorically generalized? In particular, can it be done for any locally presentable category $\C$ and any accessible endofunctor $T\colon \C \to \C$?
\end{question}

This question is of not only categorical interest but also game-theoretic interest. As explained in \Cref{ssec:PartisanGames}, $\Pf\times \Pf\colon \Set \to \Set$ is another (possibly the most important) example. We are also interested in other variants like multiplayer games, probabilistic games, linear games, and transfinite games.


\revmemo{
\begin{question}
    Is there a nice way to calculate Bouton monoid? like Adamek construction? cf. Nim-sum is just the symmetric difference via the Ackerman interpretation.
\end{question}
}
\revmemo{
\begin{question}
    As a relative local state classifier.
\end{question}
}



\appendix

\section{Existence of the minimum monoid factorizations}\label{app:MinimumMonoidFactorization}
In this appendix, we will explain the context and the construction of the minimum monoid factorization (\Cref{def:minimumMonoidFactorization}).

\begin{proposition}[Unique existence and the construction]\label{prop:UniqueExistenceOfMinimumMonoidFactorization}
    For any monoid $M$, any set $A$, and any function $f\colon M \to A$, its minimum monoid factorization $f\colon M \xrightarrow{q_f} M_f \xrightarrow{a_f} A$ uniquely exists up to a canonical isomorphism. Furthermore, the monoid homomorphism $q_f\colon M \to M_f$ is surjective, and the corresponding monoid congruence ${\sim_f}=\{(m,m')\in M\times M\mid q_f(m)=q_f(m')\}\subset M\times M$ is given by
    \[
    m\sim_{f} m' \iff \textrm{for any }a,b\in M, f(amb)=f(am'b).
    \]
\end{proposition}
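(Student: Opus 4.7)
The plan is to give the construction explicitly via the two-sided syntactic congruence, and then check that it satisfies the universal property. First, I would define the relation ${\sim_f}\subset M\times M$ exactly as in the statement, namely $m\sim_f m'$ iff $f(amb)=f(am'b)$ for all $a,b\in M$. It is immediate from the definition that $\sim_f$ is an equivalence relation, and it is compatible with multiplication: if $m_1\sim_f m_1'$ and $m_2\sim_f m_2'$, then for any $a,b\in M$ we have $f(a(m_1m_2)b)=f((am_1)m_2b)=f((am_1)m_2'b)=f(am_1(m_2'b))=f(am_1'(m_2'b))$, so $m_1m_2\sim_f m_1'm_2'$. Hence $\sim_f$ is a monoid congruence.

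Then I would set $M_f\coloneqq M/{\sim_f}$, take $q_f\colon M\to M_f$ to be the quotient map (which is a surjective monoid homomorphism by construction), and define $a_f\colon M_f\to A$ by $a_f([m])\coloneqq f(m)$. Well-definedness uses the special case $a=b=1$ in the definition of $\sim_f$: if $m\sim_f m'$ then $f(m)=f(m')$. By construction $a_f\circ q_f=f$, so we have a factorization with $q_f$ surjective and congruence equal to the prescribed $\sim_f$.

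The heart of the argument is minimality. Given any competing factorization $f=a'_f\circ q'_f$ with $q'_f\colon M\to M'_f$ a monoid homomorphism, I would take $\bullet\coloneqq q'_f(M)\subset M'_f$, which is a submonoid since $q'_f$ is a monoid homomorphism. The key observation is that $q'_f(m)=q'_f(m')$ already forces $m\sim_f m'$: indeed, for any $a,b\in M$, the homomorphism property gives $q'_f(amb)=q'_f(a)q'_f(m)q'_f(b)=q'_f(a)q'_f(m')q'_f(b)=q'_f(am'b)$, and applying $a'_f$ yields $f(amb)=f(am'b)$. Therefore the assignment $q'_f(m)\mapsto [m]$ descends to a well-defined surjective monoid homomorphism $q\colon\bullet\twoheadrightarrow M_f$ making the required diagram commute. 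Uniqueness up to canonical isomorphism then follows by applying this minimality property to any other minimum factorization and checking that the two comparison maps are mutually inverse.

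The part I expect to need the most care is the verification that $\sim_f$ is a monoid congruence and that the comparison map $q$ is actually a monoid homomorphism (as opposed to merely a set map), which is where the two-sided form $f(amb)=f(am'b)$ — as opposed to one-sided or purely value-based identifications — is essential. Beyond that, the argument is essentially the standard syntactic-monoid construction from formal language theory (cf.\ \cite{pin2020mathematical}), repackaged in the form needed for \Cref{def:minimumMonoidFactorization}.
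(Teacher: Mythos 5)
Your proposal is correct and follows essentially the same route as the paper: construct $M_f$ as the quotient by the two-sided syntactic congruence $\sim_f$, verify it is a congruence, define $a_f([m])=f(m)$ using the case $a=b=1$, and obtain minimality from the key implication $q'_f(m)=q'_f(m')\implies m\sim_f m'$ via the homomorphism property of $q'_f$. The only cosmetic difference is that you verify directly that the comparison map $q$ is a monoid homomorphism, whereas the paper notes this follows automatically from the surjectivity of $q_f$ and $q'_f$; both arguments are fine.
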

\begin{proof}
First, we need to prove that the equivalence relation $\sim_f$ is a monoid congruence. If $m\sim_f m'$ and $n\sim_f n'$, we have
\[
f(amnb)= f(am'nb)=f(am'n'b)
\]
for any $a,b\in M$,
and thus $mn\sim_f m'n'$. This proves that $\sim_f$ is a monoid congruence, and thus the monoid structure on $M/{\sim_f}$ and a surjective monoid homomorphism $q_f\colon M \twoheadrightarrow M_f \coloneqq M/{\sim_f}$ are well-defined. Furthermore, the function $a_f\colon M_f\to A: [m]_{\sim_f}\mapsto f(m)$ is also well-defined because we have $m\sim_f m' \implies f(m) = f(m')$ by taking the neutral element as $a,b$. 

So far, we have obtained a factorization $f\colon M \xrightarrow{q_f} M_f \xrightarrow{a_f} A$. 
We prove the minimality.
Let us take a monoid $M'_f$ equipped with a factorization $f= M \xrightarrow{q'_f} M'_f \xrightarrow{a'_f} A$ into a monoid homomorphism $q'_f \colon M \to M'_f$ followed by a function $a'_f\colon M'_f \to A$. Then we prove that there exsits a surjective monoid homomorphism $q\colon \Image(q'_f) \twoheadrightarrow M_f$ from the submonoid 
    $\Image(q'_f) \subset M'_f$
    such that the following diagram commutes.
    \[
    \begin{tikzcd}[row sep=10pt]
            &\Image(q'_f) \ar[r, rightarrowtail, "\subset"]\ar[dd, twoheadrightarrow, "q"]&M'_f\ar[rd, "a'_f"]&\\
        M\ar[rrr, "f", gray, dashed]\ar[ru, "q'_f", twoheadrightarrow]\ar[rd, "q_f"', twoheadrightarrow]&&&A\\
            &M_f\ar[rru, "a_f"', bend right =17pt]&&
    \end{tikzcd}
    \]
As both $q_f\colon M \twoheadrightarrow M_f$ and $q'_f\colon M\twoheadrightarrow \Image(q'_f)$ are surjective monoid homomorphisms, if a function $q\colon \Image(q'_f)\to M_f$ that makes the left triangle commutative exsits, then it automatically follows that the function $q$ is also a surjective monoid homomorphism, and that the right-hand side skew rectangle is also commutative. So it suffices to prove the implication $q'_f(m) = q'_f(m') \implies m\sim_f m'$ for any $m,m'\in M$.  If $q'_f(m) = q'_f(m')$, then for any $a,b\in M$, we have $q'_f(amb)=q'_f(a)q'_f(m)q'_f(b)= q'_f(a)q'_f(m')q'_f(b)=q'_f(am'b)$, hence $f(amb)=f(am'b)$.
This proves the implication $q'_f(m) = q'_f(m') \implies m\sim_f m'$ and completes the proof.
\end{proof}


The structure of the minimum quotient is by no means anything nontrivial nor advanced -- the idea of \dq{minimizing an algebraic structure without losing too much information} is quite mundane as exemplified by the following examples.
\begin{example}[Real numbers in complex numbers]
If you care whether the sum of two complex numbers is real, then it suffices to just keep track of the imaginary parts. This obvious fact can be described as follows.
    Consider a function $f\colon \comp \to \{\top, \bot\}$ defined by
    \[
    f(z)=
    \begin{cases}
        \top &(z\in \R)\\
        \bot &(z\notin\R ).
    \end{cases}
    \]
    Then, the minimum monoid factorization of $f$ (with respect to the additive structure of $\comp$) is given by taking the imaginary part $\comp \twoheadrightarrow \R i \to \{\top, \bot\}$.
\end{example}

\begin{example}[Syntactic monoid of a languege]\label{exmp:syntacticMonoid}
    For a finite set $\Sigma$, a \demph{language} $L$ is just a subset of the free monoid $L\subset \Sigma^{*}$. The \demph{syntactic monoid} of the language $L$ is defined as the minimum monoid factorization of the corresponding characteristic function $\chi_{L}\colon \Sigma^*\to \{\top, \bot\}$. See some textbooks on the formal language theory, for example \cite{pin2020mathematical}, for more details on the syntactic monoids.
\end{example}


\begin{remark}[Minimum $\mathbb{T}$-algebra factorization]\label{rem:minimumAlgebraFactorization}Actually, any equational theory $\mathbb{T}$ admits the property like \Cref{prop:UniqueExistenceOfMinimumMonoidFactorization}. In fact, for any $\mathbb{T}$-algebra $A$, the complete lattice of $\mathbb{T}$-algebra congruences is a complete sublattice of the complete lattice of equivalence relations on the underlying set $A$
\[
    \mathrm{Congruence}_{\mathbb{T}}(A) \hookrightarrow \mathrm{Equiv(A)},
    \]
    which is known in the universal algebra theory (\cite[II Theorem 5.3]{burris1981course}).
    Due to the adjoint functor theorem, the embedding admits both left and right adjoint, and the right adjoint part provides the desired minimum $\mathbb{T}$-algebra factorization.
    In particular, we have the minimum subgroup factorization, which we need in the context of \Cref{rmk:nonUnital}.
\end{remark}

\begin{remark}
    The construction of the minimum monoid factorization is ubiquitous in combinatorial game theory. For example, even the \dq{equality} of games is conventionally defined in this way!
    See \cite{siegel2013combinatorial}.
\end{remark}

\section{Categorical properties of \texorpdfstring{$\Gs$}{Games}}\label{app:CategoricalPropertiesOfGames}
In this appendix, we study the categorical properties of $\Gs$ and prove \Cref{thm:CategoricalPropertiesOfGames}:
\begin{description}
    \item[\Cref{thm:LocallyFinitePresentabilityOfTheGameCategory}] The category of games $\Gs$ is locally finitely presentable. In particular, it has all small limits and colimits.
    \item[\Cref{prop:Comonadicity}] The forgetful functor $U\colon \Gs \to \Set$ is comonadic. In particular, 
    it preserves, reflects, and strictly creates all small colimits.
    \item[\Cref{prop:SubobjectClassifier}]  The category of games $\Gs$ 
    has a subobject classifier.
\end{description}

We will also prove that
\begin{description}
        \item[\Cref{cor:Epi-MonoFactorization}] The category of games $\Gs$ has the epi-mono orthogonal factorization system.
        \item[\Cref{rem:GsisNotaTopos}] The category of games $\Gs$ is not cartesian closed. In particular, it is not a topos.
\end{description}


\begin{remark}[Comparison with locally finite Kripke frames]\label{rem:KripkeFrame}
In \cite{de2024profiniteness}, the categorical properties of the category of Kripke frames $\mathbf{KFr}\simeq \Coalg{\Pow}$ are studied, and then the category of locally finite Kripke frames $\mathbf{KFr}_{lf}$ is studied in the context of the duality in modal logic. As our category of games $\Gs$ is a full subcategory of $\mathbf{KFr}_{lf}$, some arguments in this appendix already appeared in \cite{de2024profiniteness}. For example, game morphisms correspond to \cite[Definition 3.3]{de2024profiniteness}, subgames (\Cref{def:Subgames}) to generated subframe \cite[Definition 3.7]{de2024profiniteness}, the descripton of colimits to \cite[Proposition 3.9]{de2024profiniteness}, the description of the equalizer to \cite[Lemma 3.11]{de2024profiniteness}, the local presentability to \cite[Theorem 4.3]{de2024profiniteness}, and the comonadicity to \cite[Theorem 5.4]{de2024profiniteness}.
    \[
    \RecCoalg{\Pf}\simeq \Gs \subsetneq \mathbf{KFr}_{lf} \subsetneq \mathbf{KFr}\simeq \Coalg{\Pow}
    \]
\end{remark}

\subsection{\texorpdfstring{$\Gs$}{Games} is locally finitely presentable}
We will prove that the category $\Gs$ is locally finitely presentable. For those who are not familiar with presentability, the standard textbook is \cite{adamek1994locally}. Our proof of the local presentability is very similar to that of \cite{de2024profiniteness}. As we will check the definition na\"{i}vely, we need to know the colimits in $\Gs$ first.
\begin{lemma}\label{lem:CocompletenessAndCreationOfColimitsConservative}
    The forgetful functor $U \colon \Gs \to \Set$ strictly creates all small colimits. In particular, $\Gs$ is cocomplete and $U$ preserves and reflects them. Furthermore, $U$ is conservative.
\end{lemma}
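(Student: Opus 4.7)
The plan is to reduce everything to standard facts about coalgebra categories via the identification $\Gs \simeq \RecCoalg{\Pf}$ from \Cref{thm:main1:GamesAsRecursiveCoalgebras}. It is a general fact that for any endofunctor $T \colon \C \to \C$, the forgetful functor $\Coalg{T} \to \C$ strictly creates all colimits that exist in $\C$: given a diagram $D \colon J \to \Coalg{T}$ with underlying colimit cocone $\iota_j \colon X_j \to X = \colim_j X_j$, the composites $T\iota_j \circ \str_j \colon X_j \to TX$ assemble into a cocone, inducing the unique coalgebra structure $\str \colon X \to TX$ for which the $\iota_j$ are coalgebra morphisms, and a standard diagram chase confirms this is the colimit in $\Coalg{T}$. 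Specializing to $T = \Pf$ gives strict creation of colimits for $\Coalg{\Pf} \to \Set$.

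Next I would verify that $\RecCoalg{\Pf}$ is closed under this colimit construction inside $\Coalg{\Pf}$, so the same strict creation statement passes to $\Gs$. By \Cref{prop:GamesFiniteTimeIsRecursivenessAndGamesAsRecursiveCoalgebras}, it suffices to show the colimit coalgebra satisfies the finite-time condition whenever all $X_j$ do. The key observation, read off from the construction, is that $\str(\bar x) = \iota_j[\str_j(x)]$ for any representative $x \in X_j$ of $\bar x$; consequently, whenever $\bar x \rel \bar y$ in the colimit $X$, the successor $\bar y$ admits a representative in the \emph{same} component $X_j$, namely some $x' \in X_j$ with $x \rel_j x'$ and $\iota_j(x') = \bar y$. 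Iterating, any candidate infinite path $\bar x_0 \rel \bar x_1 \rel \cdots$ starting from $\bar x_0 \in X_{j_0}$ lifts to an infinite path $x_0 \rel_{j_0} x_1 \rel_{j_0} \cdots$ inside the single coalgebra $X_{j_0}$, contradicting its recursiveness. Combining the two steps yields strict creation of all small colimits by $U \colon \Gs \to \Set$, whence cocompleteness of $\Gs$ and preservation/reflection of colimits by $U$.

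Conservativity is the easiest part: if $U(f)$ is a bijection, the set-theoretic inverse $f^{-1}$ exists, and rewriting the defining square $\Pf(f) \circ \str_X = \str_Y \circ f$ as $\str_X \circ f^{-1} = \Pf(f^{-1}) \circ \str_Y$ shows that $f^{-1}$ is itself a coalgebra (hence game) morphism.

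The only nontrivial point is the closure of $\RecCoalg{\Pf}$ under colimits formed in $\Coalg{\Pf}$; everything else is either standard coalgebraic abstract nonsense or follows from the first two steps. The lifting argument above works cleanly precisely because colimits in $\Set$ admit representatives in individual components and the direct-image description of $\Pf$ ensures successors in the colimit can be chosen in the same component, so no auxiliary hypothesis on $\Pf$ (beyond what we already exploit) is needed.
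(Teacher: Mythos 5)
Your proof is correct, and it diverges from the paper's at the one step that actually requires work. You and the paper share the first half: the forgetful functor $\Coalg{\Pf}\to\Set$ strictly creates all small colimits by the general theory of coalgebras of an endofunctor, and conservativity is the routine check that the set-theoretic inverse of a bijective coalgebra morphism is again a coalgebra morphism. Where you differ is in passing from $\Coalg{\Pf}$ to $\Gs\simeq\RecCoalg{\Pf}$: the paper observes that the full inclusion $\Gs\hookrightarrow\Coalg{\Pf}$ has a right adjoint (the well-founded part, citing Ad\'amek--Milius--Moss), so $\Gs$ is a coreflective subcategory and is therefore closed under colimits computed in $\Coalg{\Pf}$, whereas you verify this closure by hand, using \Cref{prop:GamesFiniteTimeIsRecursivenessAndGamesAsRecursiveCoalgebras} to reduce recursiveness to the finite-time condition and then lifting any would-be infinite path in the colimit back into a single component via the identity $\str\circ\iota_j=\Pf(\iota_j)\circ\str_j$ and the joint surjectivity of colimit legs in $\Set$. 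Your argument is more elementary and self-contained (no external citation), at the cost of being specific to $\Pf$ and its direct-image description; the paper's route is shorter here and buys the stronger structural fact that $\Gs$ is coreflective in $\Coalg{\Pf}$, which is of independent use, but relies on a quoted result. Both establish strict creation for the composite in the same way, since $\Gs$ is a full subcategory closed under the created colimits.
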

\begin{proof}
The forgetful functor is decomposed into the two forgetful functors
\[
\Gs \xrightarrow{U_0} \Coalg{\Pf}\xrightarrow{U_1} \Set.
\]
The first one $U_0\colon \Gs \to \Coalg{\Pf}$ is fully faithful, and admits a right adjoint of taking \textit{the well-founded part} (\cite[Proposition 4.19]{adamek2020well}). Therefore, $\Gs$ is coreflective subcategory of $\Coalg{\Pf}$ and the embedding functor $U_0$ is conservative and creates all small colimits. The other one $U_1\colon \Coalg{\Pf}\to \Set$ is conservative and creates all small colimits by the general theory of coalgebras of an endofunctor.
\end{proof}

\begin{definition}[Subgame]\label{def:Subgames}
    A subgame of a game $\X = (X, \rel)$ is a subset $S \subset X$ such that if $x\in S$ and $x\rel x'$ then $x' \in S$.
\end{definition}
As we will see later, subgames are almost the same as monomorphisms (\Cref{prop:SubgameAndSubobjectAndMono}).
\begin{lemma}[Image is a subgame]\label{lem:ImageisSubgame}
    For a game morphism $f\colon \X \to \Y$, its image $\Image{f}$ is a subgame of $\Y$.
\end{lemma}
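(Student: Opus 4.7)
The plan is to verify the defining closure condition of \Cref{def:Subgames} directly, using the path-lifting condition in \Cref{def:GameMorphism}(2). Fix $y \in \Image(f)$ and suppose $y \rel_\Y y'$ for some $y' \in Y$; I need to show $y' \in \Image(f)$. Since $y \in \Image(f)$, I can pick some $x \in X$ with $f(x) = y$. Then from $f(x) = y \rel_\Y y'$, the path-lifting property yields an $x' \in X$ with $x \rel_\X x'$ and $f(x') = y'$. Hence $y' = f(x') \in \Image(f)$, which is what the subgame condition requires.

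No auxiliary verification is needed: a subgame of $\X$, equipped with the induced relation, automatically inherits both the finite options condition (options in $\Image(f)$ are a subset of options in $\Y$) and the finite time condition (any infinite path in $\Image(f)$ would be an infinite path in $\Y$, contradicting that $\Y$ is a game), so it really is a game in the sense of \Cref{def:game}.

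The argument is essentially a one-line application of the defining property of a game morphism; there is no serious obstacle. It is worth emphasizing that the path-lifting condition is indispensable here: for a mere graph morphism (which satisfies only condition (1) of \Cref{def:GameMorphism}), the image need not be closed under the relation in $\Y$, as witnessed by the right-hand example in \Cref{fig:PathLiftingProperty}. Thus this lemma can also be read as an additional justification for including the path-lifting axiom in the definition of game morphism, beyond the game-theoretic motivations given in \Cref{ssec:GameMorphismsAsCoalgebraMorphisms}.
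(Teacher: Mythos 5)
Your proof is correct and is exactly the argument the paper intends: its one-line proof ("this is due to the path-lifting condition") is precisely your verification that for $y=f(x)\in\Image(f)$ and $y\rel_\Y y'$, path-lifting yields $x'$ with $f(x')=y'$, so the image is closed under $\rel_\Y$. The extra remarks (the image inheriting the finiteness conditions, and the necessity of path-lifting) are accurate but not needed, since a subgame in the sense of \Cref{def:Subgames} is just a downward-closed subset.
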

\begin{proof}
    This is due to the path-lifting condition of \Cref{def:GameMorphism}.
\end{proof}
For any injective game morphism $f\colon \Y \to \X$, its image $\Image(f)$ is isomorphic to $\Y$ as games
since $U$ is conservative (\Cref{lem:CocompletenessAndCreationOfColimitsConservative}). Therefore, by abuse of terminology, we will call an injective game morphism a subgame.

\begin{definition}\label{def:SubgameGeneration}
    For a game $\X=(X, \rel)$ and a subset $S\subset X$, the minimum subgame of $\X$ that contains $S$ is called the subgame generated by $S$.
\end{definition}
See \Cref{lem:CogenerationOfSubgames} for its construction.
\begin{lemma}\label{lem:FinitelygeneratedSubgameisFinite}
    Any subgame generated by a finite subset is finite.
\end{lemma}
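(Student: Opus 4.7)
The plan is to give an explicit description of the generated subgame, reduce to the single-element case, and then apply well-founded induction using both finiteness conditions of \Cref{def:game}.

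First, I would show that the subgame generated by $S \subset X$ admits the explicit description
\[
\langle S \rangle = \{x \in X \mid \exists n \geq 0, \exists x_0, \dots, x_n \in X \text{ with } x_0 \in S,\; x_n = x,\; x_i \rel x_{i+1}\},
\]
i.e.\ the set of all vertices reachable from $S$ along $\rel$. Closure under $\rel$ is immediate, so this is a subgame containing $S$; conversely any subgame containing $S$ must contain every reachable vertex. Once this description is in hand, for a finite subset $S = \{s_1, \dots, s_n\}$ we have $\langle S \rangle = \bigcup_{i=1}^{n} \langle \{s_i\} \rangle$, so it suffices to prove that $\langle \{x\} \rangle$ is finite for every $x \in X$.

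Next, I would prove finiteness of $\langle \{x\} \rangle$ by well-founded induction on $x$. The \emph{finite time} condition of \Cref{def:game} is exactly the assertion that $\rel$ is well-founded (there is no infinite descending chain), which validates the induction principle. For the inductive step, the \emph{finite options} condition gives $\str(x) = \{x_1, \dots, x_k\}$ with $k < \infty$, and from the explicit description above one reads off
\[
\langle \{x\} \rangle = \{x\} \cup \bigcup_{i=1}^{k} \langle \{x_i\} \rangle.
\]
By the inductive hypothesis each $\langle \{x_i\} \rangle$ is finite, so $\langle \{x\} \rangle$ is a finite union of finite sets and hence finite.

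There is no real obstacle; the lemma is essentially a packaging of both conditions in \Cref{def:game}. Its only noteworthy feature is that both conditions are used essentially: without finite options the inductive step collapses, and without finite time the induction is not even available (as witnessed by \Cref{fig:InfiniteNonGame}, where the analogous ``generated subgame'' of a single element can be infinite). Alternatively, one could phrase the same argument via K\"onig's lemma applied to the tree of finite $\rel$-paths starting at $x$, which is finitely branching and has no infinite branch, hence finite; $\langle \{x\} \rangle$ is then the image of this finite tree under taking the terminal vertex of a path.
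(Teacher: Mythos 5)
Your proof is correct and is exactly the argument the paper leaves implicit: its proof of this lemma is a one-line appeal to the two finiteness conditions of \Cref{def:game}, and your well-founded induction (or, equivalently, K\"onig's lemma) together with the reachability description of the generated subgame (cf.\ \Cref{lem:CogenerationOfSubgames}) is the standard way to spell that out. No gaps.
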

\begin{proof}
    This follows from the two finiteness conditions in the definition of games (\Cref{def:game}). (cf. \Cref{rem:finiteOptions}).
\end{proof}

\begin{lemma}\label{lem:GamesAreLocallyFinite}
    Every game is the filtered colimit of (the canonical diagram of) its finite subgames.
\end{lemma}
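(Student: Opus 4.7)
The plan is to exhibit the poset of finite subgames of $\X$, ordered by inclusion, as a filtered (in fact directed) diagram in $\Gs$ whose colimit is $\X$, and to deduce this by transporting the colimit computation along the forgetful functor using \Cref{lem:CocompletenessAndCreationOfColimitsConservative}.

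First I would check that the canonical diagram is directed. Given two finite subgames $S_1, S_2 \subseteq X$, the union $S_1 \cup S_2$ is again closed under $\rel_{\X}$ and remains finite, so is itself a finite subgame containing both; the empty subgame provides an initial object. Moreover every $x \in X$ lies in some finite subgame, namely the subgame generated by the singleton $\{x\}$, which is finite by \Cref{lem:FinitelygeneratedSubgameisFinite}. Hence the union of all finite subgames exhausts $X$, and so the colimit in $\Set$ of the underlying sets along the inclusion maps is $\bigcup_{S} S = X$, with colimit cocone the subset inclusions $S \hookrightarrow X$.

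Next I would lift this to $\Gs$. For each finite subgame $S$, the inclusion $\iota_S \colon S \hookrightarrow \X$ is a game morphism: the graph morphism condition is tautological, and the path-lifting condition in \Cref{def:GameMorphism} is precisely the downward-closure requirement in the definition of a subgame (\Cref{def:Subgames}). Downward-closure also gives $\str_{\X}(x) \subseteq S$ for every $x \in S$, so $\str_{\X}$ restricts to the coalgebra structure $\str_S \colon S \to \Pf(S)$ on each finite subgame, meaning the inclusions $\{\iota_S\}$ form a cocone in $\Gs$ with vertex $\X$ whose image under $U$ is the colimit cocone in $\Set$.

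Finally, by \Cref{lem:CocompletenessAndCreationOfColimitsConservative} the functor $U \colon \Gs \to \Set$ strictly creates small colimits. Applied to the canonical diagram of finite subgames, this yields a unique game on $X$ making every $\iota_S$ a game morphism and realising the $\Set$-colimit; by the previous paragraph $\X$ itself enjoys this property, so by uniqueness $\X$ is that colimit. There is no genuine obstacle here; the only content is the compatibility between the downward-closure condition defining subgames and the two clauses defining a game morphism, which is what makes the inclusion cocone automatically a colimit cocone in $\Gs$.
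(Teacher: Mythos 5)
Your proof is correct and follows essentially the same route as the paper: filteredness of the canonical diagram via \Cref{lem:FinitelygeneratedSubgameisFinite}, exhaustion of $X$ by finite subgames, and then transferring the $\Set$-colimit to $\Gs$ through the strict creation of colimits in \Cref{lem:CocompletenessAndCreationOfColimitsConservative}. You merely spell out the details (directedness via unions, path-lifting of subgame inclusions) that the paper's two-line proof leaves implicit.
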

\begin{proof}
    For any finite subset $S\subset X$ of a game $\X=(X, \str)$, the minimum subgame containing $S$ is finite. Therefore the canonical diagram is in fact filtered, and \Cref{lem:CocompletenessAndCreationOfColimitsConservative} completes the proof.
\end{proof}

\begin{lemma}[Finitely presentable $=$ Finite]\label{lem:FinitePresentabilityOfGames}
    A game $\X=(X, \str)$ is finitely presentable if and only if its underlying set $X$ is finite.
\end{lemma}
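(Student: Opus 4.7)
The plan splits naturally along the biconditional.

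For the ``only if'' direction, I would invoke Lemma \ref{lem:GamesAreLocallyFinite} directly: every game $\X$ is the filtered colimit of its finite subgames. If $\X$ is finitely presentable, then $\id_\X$ factors through some inclusion $\X_i \hookrightarrow \X$ of a finite subgame, so $\X$ is a retract of $\X_i$. Applying the forgetful functor $U$, which is conservative (Lemma \ref{lem:CocompletenessAndCreationOfColimitsConservative}), the underlying set $X$ becomes a retract of the finite set $X_i$ via the inclusion, forcing $X = X_i$ and hence $X$ finite.

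For the ``if'' direction, let $\X$ be a game with $X$ finite and let $\Y = \colim_{j \in J} \Y_j$ be a filtered colimit in $\Gs$. By Lemma \ref{lem:CocompletenessAndCreationOfColimitsConservative}, $U$ creates this colimit, so $UY = \colim_j UY_j$ in $\Set$, and the coalgebra structure of $\Y$ is determined: $u \rel_Y u'$ iff the relation is witnessed at some finite stage $Y_k$. The goal is to show $\hom_{\Gs}(\X, \Y) = \colim_j \hom_{\Gs}(\X, \Y_j)$. Given a game morphism $f \colon \X \to \Y$, finiteness of $X$ (hence finite presentability in $\Set$) produces some $j$ and a function $\tilde f \colon X \to Y_j$ lifting $Uf$. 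I then pass to a larger index $k \geq j$ where $\tilde f$ (composed with transition) is forced to be an actual game morphism.

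The subtle point, and what I expect to be the main obstacle, is verifying both Definition \ref{def:GameMorphism} conditions simultaneously at a common finite stage. For condition (1), each of the finitely many edges $x \rel_\X x'$ gives $f(x) \rel_\Y f(x')$, and by the colimit description this relation is witnessed in some $Y_{k_{x,x'}}$; filteredness and finiteness of the edge set of $\X$ yield a common $k$. For condition (2), after fixing such a $k$, each $\tilde f(x)$ has only finitely many out-neighbours in $Y_k$ (the finite options axiom is crucial here); for each such out-neighbour $y' \in \str_{Y_k}(\tilde f(x))$, path-lifting of $f$ in $\Y$ supplies some $x' \in X$ with $x \rel x'$ and $f(x') = \phi_k(y')$, where $\phi_k \colon Y_k \to Y$ is the colimit coprojection. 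Since $\tilde f(x')$ and $y'$ both map to the same element of $Y$, filteredness pushes them together at some further stage; finiteness of the collection of such pairs $(x, y')$ lets us absorb everything into a single index.

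Uniqueness of the factorization is routine: if two game morphisms $g, h \colon \X \to \Y_j$ induce the same morphism into $\Y$, then $Ug = Uh$ in $UY$, and finite presentability of $UX$ in $\Set$ forces $g$ and $h$ to coincide after transition to some later $Y_k$. Since $U$ is faithful (being conservative and preserving monos of the underlying set), this identification lifts to $\Gs$, completing the proof.
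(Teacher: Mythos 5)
Your proof is correct, and the ``only if'' direction is exactly the paper's argument (factor $\id_\X$ through a finite subgame via \Cref{lem:GamesAreLocallyFinite}; note that conservativity of $U$ is not really needed there, since applying $U$ to the retraction already exhibits $X$ as a retract of a finite set). For the ``if'' direction, however, you take a genuinely different route. The paper avoids checking the two conditions of \Cref{def:GameMorphism} by hand: it first lifts $Uf$ to a mere function $h\colon U\X\to UFc$, forms the (finite, by \Cref{lem:FinitelygeneratedSubgameisFinite}) subgame $\gS\subset Fc$ generated by $\Image(h)$, and then uses finiteness of $\gS$ to find a later stage $c'$ at which the cocone leg is injective on $Fk(\gS)$; by \Cref{prop:SubgameAndSubobjectAndMono} this finite piece is then literally a subgame of $\Y$ containing $\Image(f)$, so the corestriction of $f$ is automatically a game morphism and composes with the inclusion into $Fc'$ to give the lift. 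You instead verify conditions (1) and (2) directly by finite bookkeeping in the filtered colimit, using that filtered colimits in $\Set$ witness equalities and relations at finite stages. Your argument does go through, but note that the last step you leave implicit is where the work sits: after pushing everything to the common index $k'$, path-lifting for $\tilde f_{k'}$ holds because the transition map $\Y_k\to\Y_{k'}$ is itself a coalgebra homomorphism, so every out-neighbour at stage $k'$ is the image of an out-neighbour at stage $k$, for which your chosen $x'$ does the job; without this remark the ``absorb into a single index'' step is incomplete. The trade-off: your element-chasing proof is more self-contained and makes visible exactly where the finite-options and finite-time hypotheses enter, while the paper's proof is shorter because it delegates the morphism verification to the identification of injective maps with subgame inclusions and reuses the generation lemma. (Two cosmetic points: faithfulness of $U$ is immediate from game morphisms being structured functions, not a consequence of conservativity; and in a general filtered index category ``transition to a later stage'' should be phrased via a morphism coequalizing the relevant pair, which filteredness supplies.)
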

\begin{proof}
Let us take an arbitrary game $\X=(X,\str)$.
    If a game $\X$ is finitely presentable, \Cref{lem:GamesAreLocallyFinite} implies that $\X$ coincides with its finite subgame, and hence $X$ itself is finite.

    Conversely, assuming that the underlying set $X$ is finite, we will prove that the hom functor
    \[
    \Gs (\X, -)\colon \Gs \to \Set
    \]
    preserves filtered colimits.

    Let $\C$ be a filtered category, $F\colon \C \to \Gs$ be a functor, and $\{\alpha_c \colon Fc \to \Y\}_{c\in \ob{\C}}$ be the colimit cocone. Take an arbitrary morphism $f\colon \X \to \Y$. Our goal is to prove that there exists $c\in \ob{\C}$ such that $f$ has a lift $g$ along $\alpha_c$
    \[
    \begin{tikzcd}
        &Fc\ar[d,"\alpha_c"]\\
        \X\ar[r,"f"']\ar[ru, dashed,"\exists g"]&\Y.
    \end{tikzcd}
    \]
    (The essential uniqueness of the factorization follows from the case of $\Set$ and \Cref{lem:CocompletenessAndCreationOfColimitsConservative}.)

    Since a finite set is finitely presentable in $\Set$, and $U$ preserves small colimits (in particuler, filtered colimits), there exists a function $h \colon U\X \to UFc$ such that the following diagram commutes
    \[
    \begin{tikzcd}
        &UFc\ar[d,"U\alpha_c"]\\
        U\X\ar[r,"Uf"']\ar[ru, dashed,"h"]&U\Y.
    \end{tikzcd}
    \]

    Let $\iota\colon \gS\to Fc$ be the subgame of $Fc$, generated by the image of $h$. Since $\X$ is finite, $\gS$ is also finite (Lemma \Cref{lem:FinitelygeneratedSubgameisFinite}). So far, we have onbtained the diagram
    \[
    \begin{tikzcd}
        &\gS\ar[r,\mono,"\iota"]&Fc\ar[d,"\alpha_c"]\\
        \X\ar[rr,"f"']\ar[ru, dashed,"h"]&&\Y,
    \end{tikzcd}
    \]
    where $h$ is denoted by a dashed arrow since it is a mere function. Since $\Y$ is a filtered colimit (preserved by $U$) and $\gS$ is finite, there exists $k\colon c \to c'$ in $\C$ such that $\alpha_{c'}$ is injective on the subgame $Fk(\gS)=\Image{(Fk \circ \iota)}$ of $Fc'$ (\Cref{lem:ImageisSubgame}). In other words, the morphism $\alpha_{c'}\circ m$ in the following diagram is injective. 
    \[
    \begin{tikzcd}
        &\gS\ar[r,\mono,"\iota"]\ar[d,\epi,"e"']&Fc\ar[dd,bend left, "\alpha_c"]\ar[d,"Fk"']\\
        &Fk(\gS)\ar[r,\mono,"m"]\ar[rd,"\alpha_{c'}\circ m"', \mono]&Fc'\ar[d,"\alpha_{c'}"']\\
        \X\ar[rr,"f"']\ar[ruu, dashed,"h", bend left]&&\Y,
    \end{tikzcd}
    \]
    Because the morphism $\alpha_{c'}\circ m$ is injective, the game $Fk(\gS)$ can be regarded as a subgame of $\Y$. Therefore, the composite $e\circ f$, which is the lift of $f$ along $\alpha_{c'}\circ m$, is a game morphism
    \[
    \begin{tikzcd}
        &\gS\ar[r,\mono,"\iota"]\ar[d,\epi,"e"']&Fc\ar[dd,bend left, "\alpha_c"]\ar[d,"Fk"']\\
        &Fk(\gS)\ar[r,\mono,"m"]\ar[rd,"\alpha_{c'}\circ m"', \mono]&Fc'\ar[d,"\alpha_{c'}"']\\
        \X\ar[rr,"f"']\ar[ru,"e\circ h"]\ar[ruu, dashed,"h", bend left]&&\Y.
    \end{tikzcd}
    \]
    This proves that $f$ has a lift along $\alpha_{c'}$ in the category $\Gs$.
\end{proof}

\begin{theorem}\label{thm:LocallyFinitePresentabilityOfTheGameCategory}
    The category of games $\Gs$ is locally fintiely presentble.
\end{theorem}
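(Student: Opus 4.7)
The plan is to verify the definition of a locally finitely presentable category directly by assembling the three preceding lemmas. Recall that a category $\mathcal{K}$ is locally finitely presentable if and only if it is cocomplete and possesses a small set $\mathcal{G}$ of finitely presentable objects such that every object of $\mathcal{K}$ is a filtered colimit of objects from $\mathcal{G}$. All three ingredients are essentially already in hand.

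First, cocompleteness of $\Gs$ is exactly the content of \Cref{lem:CocompletenessAndCreationOfColimitsConservative}. Second, I would take $\mathcal{G}$ to be a set of representatives of isomorphism classes of games whose underlying set is finite; this is genuinely a small set because a finite game is determined by a finite cardinal together with a relation on it, so up to isomorphism there are only countably many such games. By \Cref{lem:FinitePresentabilityOfGames} each element of $\mathcal{G}$ is finitely presentable in $\Gs$. Third, \Cref{lem:GamesAreLocallyFinite} shows that every game $\X$ is the filtered colimit (in $\Gs$) of its finite subgames, and each finite subgame is isomorphic to an object of $\mathcal{G}$, so every object of $\Gs$ is indeed a filtered colimit of objects from $\mathcal{G}$.

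Putting these three observations together yields that $\Gs$ satisfies the definition of a locally finitely presentable category, and the standard consequence that any locally finitely presentable category is complete (see \cite{adamek1994locally}) gives the ``in particular'' part of \Cref{thm:CategoricalPropertiesOfGames} in conjunction with \Cref{lem:CocompletenessAndCreationOfColimitsConservative}.

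There is really no main obstacle here beyond checking that the collection of finite games forms a set, which is immediate from the finiteness of both the underlying set and the relation. All the substantive work — namely, identifying the finitely presentable objects and showing that the filtered diagram of finite subgames has the original game as its colimit — has already been carried out in \Cref{lem:FinitePresentabilityOfGames} and \Cref{lem:GamesAreLocallyFinite}, so the theorem itself reduces to a one-line synthesis of these facts.
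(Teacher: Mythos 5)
Your proof is correct and is essentially the same argument as the paper's: the paper also verifies the definition directly by combining \Cref{lem:CocompletenessAndCreationOfColimitsConservative} (cocompleteness), \Cref{lem:FinitePresentabilityOfGames} (finite $=$ finitely presentable, giving a small set of generators), and \Cref{lem:GamesAreLocallyFinite} (every game is a filtered colimit of its finite subgames). Your additional remark that the finite games form a small set up to isomorphism is a harmless explicit check that the paper leaves implicit.
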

\begin{proof}
    We have observed that $\Gs$ is cocomplete (\Cref{lem:CocompletenessAndCreationOfColimitsConservative}) and admits a small number of finitely presentable objects (\Cref{lem:FinitePresentabilityOfGames}), and that every object is a filtered colimit of finitely presentable objects (\Cref{lem:GamesAreLocallyFinite}).
\end{proof}

Although we have spent a whole subsection to prove the existence of the terminal object in \Cref{ssec:terminalGame}, we can prove much more stronger statement now.
\begin{corollary}\label{Cor:CompletenessOfGames}
    The category of games $\Gs$ is complete.
\end{corollary}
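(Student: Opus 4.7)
The plan is simply to invoke the classical theorem that every locally presentable category admits all small limits (see, e.g., \cite[Corollary 1.28]{adamek1994locally}). Since Theorem \ref{thm:LocallyFinitePresentabilityOfTheGameCategory} has just established that $\Gs$ is locally finitely presentable, completeness is immediate and no further work is strictly required. Combined with Lemma \ref{lem:CocompletenessAndCreationOfColimitsConservative}, which already gives cocompleteness, this finishes the bicompleteness of $\Gs$.

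If one prefers a more concrete construction, there is also the following route. Recall from the proof of Lemma \ref{lem:CocompletenessAndCreationOfColimitsConservative} that the inclusion $U_0\colon \Gs \hookrightarrow \Coalg{\Pf}$ is a fully faithful left adjoint whose right adjoint sends a $\Pf$-coalgebra to its well-founded part (see \cite[Proposition 4.19]{adamek2020well}); thus $\Gs$ is a coreflective subcategory of $\Coalg{\Pf}$. A limit in $\Gs$ may therefore be computed by forming the corresponding limit in $\Coalg{\Pf}$ and then applying the coreflector. Completeness of $\Coalg{\Pf}$ in turn follows because $\Pf\colon \Set \to \Set$ is finitary, so $\Coalg{\Pf}$ is itself locally finitely presentable.

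I do not expect any obstacle: all the substantive work is already absorbed into Theorem \ref{thm:LocallyFinitePresentabilityOfTheGameCategory}, and the corollary is essentially a one-line appeal to the general theory of locally presentable categories.
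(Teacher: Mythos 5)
Your proposal is correct and coincides with the paper's own argument, which likewise deduces completeness in one line from \Cref{thm:LocallyFinitePresentabilityOfTheGameCategory} and the fact that every locally presentable category is complete. The alternative route via the coreflective embedding into $\Coalg{\Pf}$ is a reasonable extra, but it is not needed and is not what the paper does.
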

\begin{proof}
    This follows since every locally presentable category is complete.
\end{proof}

\begin{corollary}\label{Cor:closedness}
    All the (non-unital or unital) monoidal structure that lifts the cartesian monoidal structure of $\Set$
    \[
    \begin{tikzcd}
        \Gs \times \Gs \ar[r,"\ast"]\ar[d, "U\times U"]&\Gs\ar[d,"U"]\\
        \Set \times \Set \ar[r, "\times"] & \Set
    \end{tikzcd}
    \]
    is monoidal closed.
\end{corollary}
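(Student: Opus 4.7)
The plan is to invoke the adjoint functor theorem for locally finitely presentable categories: since $\Gs$ is locally finitely presentable by \Cref{thm:LocallyFinitePresentabilityOfTheGameCategory}, any cocontinuous endofunctor of $\Gs$ automatically admits a right adjoint. Hence it suffices, for every fixed game $\Y$, to show that the functor $-\ast\Y\colon\Gs\to\Gs$ preserves all small colimits (and symmetrically for $\Y\ast-$ in the non-symmetric case, though all examples of interest are symmetric).

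First I would read off from the hypothesis that $\ast$ lifts the cartesian product of $\Set$ along $U$ the equality of functors
\[
U\circ(-\ast\Y)\;=\;(-\times U\Y)\circ U\colon \Gs\longrightarrow\Set.
\]
The right-hand side is cocontinuous as a composite of two cocontinuous functors: $U$ strictly creates (hence preserves) all small colimits by \Cref{lem:CocompletenessAndCreationOfColimitsConservative}, and $-\times U\Y\colon\Set\to\Set$ is left adjoint to $(-)^{U\Y}$ and thus cocontinuous. Consequently $U\circ(-\ast\Y)$ is cocontinuous, and since $U$ moreover reflects colimits (again by strict creation plus conservativity, both established in \Cref{lem:CocompletenessAndCreationOfColimitsConservative}), the functor $-\ast\Y$ itself preserves all small colimits in $\Gs$.

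The adjoint functor theorem for locally presentable categories then yields a right adjoint $[\Y,-]\colon\Gs\to\Gs$, and the standard coherence/assembly argument (parameter theorem) upgrades these pointwise adjoints to a genuine closed structure. Nothing in the argument uses a unit object, so it applies verbatim to the non-unital case (e.g.\ \Cref{exmp:ConjunctiveSum}). The only real substance is the strict creation of colimits by $U$ — this is the whole content of \Cref{lem:CocompletenessAndCreationOfColimitsConservative}, which rests on the coreflectivity of $\Gs\hookrightarrow\Coalg{\Pf}$ via the well-founded part. Once that input is available, the closedness is essentially formal, and there is no substantial obstacle.
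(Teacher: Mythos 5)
Your proposal is correct and follows essentially the same route as the paper: express $U\circ({-}\ast\Y)$ as $({-}\times U\Y)\circ U$, use that $U$ preserves and reflects small colimits (\Cref{lem:CocompletenessAndCreationOfColimitsConservative}) and that $X\times{-}$ is cocontinuous on $\Set$, and then apply the adjoint functor theorem for locally presentable categories. The only difference is that you spell out the final assembly of pointwise right adjoints into a closed structure (the parameter theorem), which the paper leaves implicit.
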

\begin{proof}
    Due to the adjoint functor theorem for locally presentable categories,
    it suffices to prove that, for any game $\X=(X, {\rel})$, the functor $\X\ast {-}\colon \Gs \to \Gs$ is cocontinuous. This follows from the commutative daigram
    \[
    \begin{tikzcd}
        \Gs\ar[r, "\X\ast{-}"]\ar[d, "U"]&\Gs\ar[d, "U"]\\
        \Set\ar[r, "X\times {-}"]&\Set,
    \end{tikzcd}
    \]
     and the fact that the forgetful functor $U\colon \Gs \to \Set$ reflects colimits (\Cref{lem:CocompletenessAndCreationOfColimitsConservative}).
\end{proof}
This corollary implies that all of the Conway addition (\Cref{def:ConwayAddition}), the selective sum (\Cref{exmp:SelctiveSum}), and the conjunctive sum (\Cref{exmp:ConjunctiveSum}) are symmetric monoidal closed structure on $\Gs$. See also \Cref{sssec:ClosedStructure}.

\begin{corollary}\label{cor:rightAdjointofU}
    The forgetful functor $U \colon \Gs \to \Set$ admits a right adjoint.
\end{corollary}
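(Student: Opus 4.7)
The plan is to invoke the adjoint functor theorem for locally presentable categories, which states that a functor between locally presentable categories admits a right adjoint if and only if it is cocontinuous (preserves all small colimits). All the ingredients have already been assembled in the preceding lemmas, so the proof is essentially a citation and verification.

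First I would note that the target category $\Set$ is locally finitely presentable (this is the paradigmatic example), and the source category $\Gs$ is locally finitely presentable by \Cref{thm:LocallyFinitePresentabilityOfTheGameCategory}. Next I would recall that by \Cref{lem:CocompletenessAndCreationOfColimitsConservative} the forgetful functor $U\colon \Gs \to \Set$ strictly creates, and in particular preserves, all small colimits, so $U$ is cocontinuous. Applying the adjoint functor theorem for locally presentable categories then yields the desired right adjoint.

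I do not anticipate a genuine obstacle here: every nontrivial input (local presentability of $\Gs$, cocontinuity of $U$) has already been established earlier in the appendix, and the argument is a direct appeal to a standard theorem. If one preferred not to invoke local presentability of $\Gs$ directly, an alternative route would be the Special Adjoint Functor Theorem, using that $\Set$ is complete, well-powered, has a small cogenerator, and that $U$ is cocontinuous; but the locally presentable route is cleaner given what we have proved.
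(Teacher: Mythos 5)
Your proposal is correct and matches the paper's own argument: the paper likewise derives the right adjoint from the adjoint functor theorem for locally presentable categories, using \Cref{thm:LocallyFinitePresentabilityOfTheGameCategory} and the cocontinuity of $U$ from \Cref{lem:CocompletenessAndCreationOfColimitsConservative}.
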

\begin{proof}
    This also follows from the adjoint functor theorem for locally presentable categories and \Cref{lem:CocompletenessAndCreationOfColimitsConservative}.
\end{proof}
This adjunction is the subject of the next subsection.

\subsection{The forgetful \texorpdfstring{$\dashv$}{dashv} cofree game adjunction
}\label{ssec:ForgetfulCofreeAdjunction}
This subsection aims to describe the right adjoint of the forgetful functor $U \colon \Gs \to \Set$, which exsits due to \Cref{cor:rightAdjointofU}. We temporarily write $R\colon \Set \to \Gs$ for the right adjoint functor and call it \demph{the cofree games functor}. We already know that $R(1)$ should be isomorphic to the terminal game $\H$. So we write $RS=(\H_S, \str_S\colon \H_S \to \Pf(\H_S))$ for the cofree game $RS$. Let $\epsilon_{S}\colon UR(S)=\H_S \to S $ denote the counit at a set $S$.

We will observe that we can calculate $\H_S$ as the initial $\Pf({-})\times S$-algebra and hence by Adamek's fixed point theorem (cf. Diagram \ref{eq:TerminalGameHereditarilyFiniteAdamek})! In fact, we can show that the $\Pf({-})\times S$-coalgebra 
\[(\H_S, \langle \str_S, \epsilon_S\rangle\colon \H_S\to \Pf(\H_S)\times S)\]
is the terminal recursive $\Pf({-})\times  S$-coalgebra, and hence given by the initial $\Pf$-algebra (\Cref{prop:TerminalRecursiveCoalgebraIsInitialAlgebra}). 
Notice that a $\Pf({-})\times S$-coalgebra $\langle \str,\sigma \rangle\colon  X \to \Pf(X) \times S$ is recursive if and only if the $\Pf$-coalgebra $\str\colon X\to \Pf(X)$ is recursive\footnote{One can directly prove it. \revmemo{parametrizaed recursion?}}. 
Using this fact, for any recursive $\Pf({-})\times S$-coalgebra $\langle \str,\sigma \rangle\colon  X \to \Pf(X) \times S$,
the transpose of $\sigma$, which is denoted by $\sigma^{\flat}\colon \X=(X, \str) \to (\H_S,\str_S)=RS$, is the unique $\Pf({-})\times S$-coalgebra homomorphism:
\[
\begin{tikzcd}
    X\ar[r, "{\langle \str,\sigma \rangle}"]\ar[d, "\sigma^{\flat}"]& \Pf(X) \times S \ar[d, "\Pf(\sigma^{\flat}) \times \id_S"]\\
    \H_S \ar[r, "{\langle \str_S,\epsilon_S \rangle}"]& \Pf(\H_S) \times S.
\end{tikzcd}
\]
This proves that $(\H_S, \langle \str_S, \epsilon_S\rangle\colon \H_S\to \Pf(\H_S)\times S)$ is the terminal recursice $\Pf({-})\times S$-coalgebra, and hence given by Adamek's fixed point theorem:
\begin{equation}\label{eq:CofreeGamesHereditarilyFiniteAdamek}
    \begin{tikzcd}
            \emptyset \ar[r]& \Pf(\emptyset)\times S \ar[r]& \Pf(\Pf(\emptyset)\times S) \times S\ar[r]&\Pf(\Pf(\Pf(\emptyset)\times S) \times S)\times S \ar[r]&\cdots 
            \H_S 
        \end{tikzcd}
\end{equation}

By the above observations, we obtain the following set-theoretic realization of $\H_S$.
\begin{definition}[Labeled hereditarily finite sets]\label{def:LabeledHFS}
    For a set (of labels) $\Lambda$, a \demph{$\Lambda$-labeled hereditarily finite set} is recursively defined as a pair $(S,\lambda)$ of a finite set $S$ of $\Lambda$-labeled hereditarily finite sets and a label $\lambda \in \Lambda$.
    In other words, the set of all $\Lambda$-labeled hereditarily finite sets $\H_\Lambda$ is defined to be
    \[
    \H_\Lambda = \bigcup_{k=0}^{\infty} \H_{\Lambda,n},
    \]
    where $\H_{\Lambda,0}= \emptyset$ and $\H_{\Lambda,n+1}= \Pf(\H_{\Lambda,n}) \times \Lambda$.
\end{definition}

Just like a hereditarily finite set is a set with a finite number of parentheses, a $\Lambda$-labeled hereditarily finite set is a \dq{$\Lambda$-labeled set with finite number of $\Lambda$-labeled parentheses.} For example, if $\Lambda=\{{\color{red}R},{\color{green!60!black}G}, {\color{blue}B}\}$, a typical element of $\H_{\Lambda}$ looks like
\begin{equation}\label{eq:RGBhereditarilyfiniteset}
    \pg{\pg{}\pb{\pr{\pr{}\pg{}\pb{}}\pg{\pb{\pr{}}}}} \in \H_{\{{\color{red}R},{\color{green!60!black}G}, {\color{blue}B}\}}.
\end{equation}

\begin{proposition}[Forgetful-Cofree adjunction]\label{prop:ForgetfulCofreeadjunction}
The right adjoint $R \vdash U$ 
    \[\ADJ{\Set}{R}{\Gs}{U}\]
    is given by $R(\Lambda) =(\H_\Lambda, \str_{\Lambda}\colon \H_{\Lambda}\to \Pf{(\H_{\Lambda})})$, where the structure map $\str_{\Lambda}$ sends each element $x=(S, \lambda)\in \Pf(\H_{\Lambda})\times \Lambda = \H_{\Lambda}$ to $\str_{\Lambda}(x)=S\in \Pf(\H_{\Lambda})$. The counit $\epsilon_{\Lambda}\colon \H_{\Lambda} \to \Lambda$ sends each element $x=(S, \lambda)$ to its label $\lambda\in \Lambda$.
\end{proposition}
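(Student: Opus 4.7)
The plan is to unfold the adjunction into a universal property and then identify it with a terminal recursive coalgebra problem, which by Lambek/Capretta (\Cref{prop:TerminalRecursiveCoalgebraIsInitialAlgebra}) becomes an initial algebra problem solvable by Adamek's fixed point theorem, as already outlined in the paragraph preceding the statement. Concretely, I want to exhibit, for each set $\Lambda$ and each game $\X = (X, \str)$, a natural bijection
\[
\Gs(\X, R(\Lambda)) \;\cong\; \Set(U\X, \Lambda),
\]
where $R(\Lambda) = (\H_\Lambda, \str_\Lambda)$ is the candidate I need to construct. The existence of \emph{some} right adjoint is already granted by \Cref{cor:rightAdjointofU}, so the task is really to describe it explicitly and match the formulas for $\str_\Lambda$ and $\epsilon_\Lambda$.

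First I would reformulate the universal property coalgebraically. Giving a function $\sigma \colon U\X \to \Lambda$ alongside the coalgebra structure $\str \colon X \to \Pf(X)$ is the same as giving a $\Pf(-) \times \Lambda$-coalgebra structure $\langle \str, \sigma\rangle \colon X \to \Pf(X) \times \Lambda$. Moreover, such a $\Pf(-)\times\Lambda$-coalgebra is recursive in the sense of \Cref{def:recursiveCoalgebras} if and only if the underlying $\Pf$-coalgebra $(X,\str)$ is recursive, hence (by \Cref{prop:GamesFiniteTimeIsRecursivenessAndGamesAsRecursiveCoalgebras}) if and only if $(X,\str)$ comes from a game. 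This equivalence follows because a coalgebra-to-algebra morphism out of $\langle\str,\sigma\rangle$ is governed by the same recursion as one out of $\str$ alone, merely carrying the extra parameter in $\Lambda$; I will spell out this observation as a short lemma using the canonical isomorphism $\Alg{\Pf(-)\times\Lambda} \simeq \Alg{\Pf}/\Lambda$ (algebras over the constant-$\Lambda$ algebra).

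Next I will invoke \Cref{prop:TerminalRecursiveCoalgebraIsInitialAlgebra}: the terminal recursive $\Pf(-)\times\Lambda$-coalgebra exists and is $(I,\iota^{-1})$ where $(I,\iota)$ is the initial $\Pf(-)\times\Lambda$-algebra. Adamek's construction (cf. the chain displayed in Diagram \ref{eq:CofreeGamesHereditarilyFiniteAdamek}) computes $I$ as the colimit
\[
\colim\bigl(\emptyset \to \Pf(\emptyset)\times\Lambda \to \Pf(\Pf(\emptyset)\times\Lambda)\times\Lambda \to \cdots \bigr),
\]
which is finitary because both $\Pf$ and $(-)\times\Lambda$ preserve filtered colimits. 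Unwinding this colimit set-theoretically yields exactly the set $\H_\Lambda$ of $\Lambda$-labeled hereditarily finite sets from \Cref{def:LabeledHFS}, and the isomorphism $\iota \colon \Pf(\H_\Lambda)\times\Lambda \xrightarrow{\sim} \H_\Lambda$ is the tautological identification. Its inverse, read component-wise, is exactly $\langle \str_\Lambda, \epsilon_\Lambda\rangle$ with the formulas $\str_\Lambda(S,\lambda) = S$ and $\epsilon_\Lambda(S,\lambda) = \lambda$ claimed in the proposition.

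Finally, I will assemble the adjunction. Given $\sigma \colon U\X \to \Lambda$, the transpose $\sigma^\flat \colon \X \to R(\Lambda)$ is the unique hylomorphism from the recursive $\Pf(-)\times\Lambda$-coalgebra $\langle\str,\sigma\rangle$ to the algebra $(\H_\Lambda,\iota)$; conversely, given $f \colon \X \to R(\Lambda)$ in $\Gs$, its transpose is $\epsilon_\Lambda \circ Uf$. The triangle identities and naturality reduce to the uniqueness clause in the universal property of the terminal recursive coalgebra, so they are automatic. I expect the main (minor) obstacle to be a clean proof of the equivalence ``$\langle\str,\sigma\rangle$ recursive iff $\str$ recursive'': it is really a parametrized recursion statement, and I want to phrase it so that both directions are visibly diagram chases rather than an induction on well-foundedness.
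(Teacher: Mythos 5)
Your proposal is correct and follows essentially the same route as the paper: the equivalence ``$\langle\str,\sigma\rangle$ is a recursive $\Pf(-)\times\Lambda$-coalgebra iff $\str$ is recursive,'' \Cref{prop:TerminalRecursiveCoalgebraIsInitialAlgebra}, Adamek's fixed-point chain, and the identification of the resulting colimit with the labeled hereditarily finite sets $\H_\Lambda$ are exactly the ingredients the paper uses. The only (cosmetic) difference is direction: the paper starts from the abstract existence of the right adjoint (\Cref{cor:rightAdjointofU}) and identifies $R(\Lambda)$ with the terminal recursive $\Pf(-)\times\Lambda$-coalgebra, whereas you build that terminal recursive coalgebra first and then verify the hom-set bijection directly, which makes the appeal to \Cref{cor:rightAdjointofU} unnecessary.
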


We will utilize this description of cofree games in the construction of the subobject classifier (\Cref{ssec:SubobjectClassifier}).

\subsection{Some limits in \texorpdfstring{$\Gs$}{Games}}
We have observed that the category $\Gs$ has all small limits (\Cref{Cor:CompletenessOfGames}). The only example of small limits we have seen so far is the terminal object (\Cref{prop:TerminalGameAndInitialAlgebraAreHereditarilyFiniteSets}).
In this subsection, we will calculate some other finite limits.

\subsubsection{Inverse image}
The first example is the inverse images of subgames.
\begin{lemma}\label{lem:InverseImageOfGames}
    For any game morphism $f\colon \X \to \Y$ and a subgame $S\subset Y$, its inverse image $f^{-1}(S)\subset X$ is also a subgame of $\X$, and the diagram
    \[
    \begin{tikzcd}
        f^{-1}(S) \ar[r] \ar[d, rightarrowtail]&S \ar[d, rightarrowtail]\\
        \X\ar[r, "f"] & \Y
    \end{tikzcd}
    \]
    is pullback in $\Gs$.
\end{lemma}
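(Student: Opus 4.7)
The plan is to verify both assertions by a direct elementary computation using the explicit characterization of game morphisms from \Cref{def:GameMorphism}, without appealing to the general existence of pullbacks in $\Gs$ guaranteed by \Cref{Cor:CompletenessOfGames}.

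For the first claim, I would argue as follows: if $x \in f^{-1}(S)$ and $x \rel_{\X} x'$, then the graph-morphism condition on $f$ yields $f(x) \rel_\Y f(x')$, and since $f(x) \in S$ with $S$ a subgame of $\Y$, closure gives $f(x') \in S$, hence $x' \in f^{-1}(S)$. Thus $f^{-1}(S)$ equipped with the restricted relation is a subgame of $\X$. For the upper horizontal arrow, the restriction $\bar{f}\colon f^{-1}(S)\to S$ of $f$ inherits the graph-morphism condition from $f$, and its path-lifting property follows because if $\bar{f}(x)=f(x)\rel_{\Y} y$ with $y\in S$, then path-lifting for $f$ supplies $x'\in X$ with $x\rel_{\X} x'$ and $f(x')=y$, and $y\in S$ forces $x'\in f^{-1}(S)$ as required. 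The vertical arrow $f^{-1}(S)\hookrightarrow \X$ is a game morphism because every subgame inclusion trivially satisfies both conditions of \Cref{def:GameMorphism}, and commutativity of the square is evident.

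To check the universal property, let $\mathbb{T}=(T,\rel_T)$ be a game equipped with game morphisms $g\colon \mathbb{T}\to\X$ and $h\colon \mathbb{T}\to S$ satisfying $f\circ g = \iota_S\circ h$, where $\iota_S\colon S\hookrightarrow \Y$ is the inclusion. At the level of underlying sets, this forces the mediating map to be $k(t)\coloneqq g(t)$, which in fact factors through $f^{-1}(S)$ since $f(g(t))=h(t)\in S$. Because the relation on $f^{-1}(S)$ is simply the restriction of $\rel_\X$, both the graph-morphism and the path-lifting conditions for $k$ follow immediately from the corresponding conditions for $g$, and uniqueness is automatic since the underlying function is forced. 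The only mildly subtle point in the whole argument is the verification of path-lifting for $\bar{f}$, where one must combine path-lifting of $f$ with the subgame closure of $S$ to keep the lifted vertex inside $f^{-1}(S)$; every other verification is a routine diagram chase at the set level.
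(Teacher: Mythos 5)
Your proof is correct and matches the paper's intent exactly: the paper simply says ``one can check it directly,'' and your argument is precisely that direct element-wise verification (closure of $f^{-1}(S)$ under moves, path-lifting of the restricted map via path-lifting of $f$ plus downward closure of $S$, and the forced mediating map for the universal property). Nothing is missing.
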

\begin{proof}
One can check it directly.
    \revmemo{write: taut comonad}
\end{proof}

\subsubsection{Monomorphisms}
The second example of limits is monomorphisms, which is a morphism $f$ whose pullback along itself $f$ is given by two identity maps. The observations of monomorphisms in this subsubsection will be utilized to study the subobject classifier (\Cref{ssec:SubobjectClassifier}).

Just to avoid the following argument becoming wordy, we introduce an accessibility relation in an obvious way:
\begin{definition}[Accessibility relation]\label{DefinitionAccessibility}
    For a game $\X=(X, \rel)$, we say that an element $x\in X$ is \demph{accessible} from $x'$ if there exists a non-negative integer $n \in \N$ and a sequence of elements $x_0, \dots x_n$ that satisfy
    \begin{itemize}
        \item $x_0=x'$,
        \item $x_i \rel x_{i+1}$ for $0\leq i < n$, and
        \item $x_n=x$.
    \end{itemize}
    This accessibility relation is denoted by $x' \acc x$.
\end{definition}
We write $x' \accneq x$ for $(x'\acc x) \land (x'\neq x)$.

\begin{remark}[Subgames are downward closed subset]
    This accessibility relation is just the reflective and transitive closure of $\rel$, and defines a preorder on the underlying set $X$. Furthermore, due to the \dq{finite time} condition in \Cref{def:game}, it is a partial order.
    The notion of subgames (\Cref{def:Subgames}) coincides with the notion of downward closed subsets of the poset.
\end{remark}

\begin{proposition}[Subgames $=$ Subobjects]\label{prop:SubgameAndSubobjectAndMono}
    For any game morphism $f\colon \X \to \Y$, the following conditions are equivalent:
    \begin{enumerate}
        \item $f$ is monic in $\Gs$. \label{ConditionMonic}
        \item $f$ is injective. \label{ConditionInjective}
        \item $f$ is (canonically isomorphic to) a subgame inclusion.\label{ConditionSubobject}
    \end{enumerate}
\end{proposition}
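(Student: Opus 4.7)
The plan is to prove the equivalences via the short cycle $(\ref{ConditionSubobject}) \Leftrightarrow (\ref{ConditionInjective}) \Rightarrow (\ref{ConditionMonic}) \Rightarrow (\ref{ConditionInjective})$.

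The equivalence $(\ref{ConditionInjective}) \Leftrightarrow (\ref{ConditionSubobject})$ and the implication $(\ref{ConditionInjective}) \Rightarrow (\ref{ConditionMonic})$ I would dispatch as direct consequences of already-established facts. A subgame inclusion is injective on underlying sets, and every isomorphism in $\Gs$ is injective by the conservativity of $U$ (\Cref{lem:CocompletenessAndCreationOfColimitsConservative}), yielding $(\ref{ConditionSubobject}) \Rightarrow (\ref{ConditionInjective})$. Conversely, if $f$ is injective, then \Cref{lem:ImageisSubgame} tells us that $\Image(f) \subset \Y$ is a subgame, and $f$ factors as $\X \to \Image(f) \hookrightarrow \Y$ with a bijective first factor, which is therefore an isomorphism in $\Gs$ (again by conservativity of $U$). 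For $(\ref{ConditionInjective}) \Rightarrow (\ref{ConditionMonic})$, faithfulness of $U$ suffices: if $f$ is injective and $f g_0 = f g_1$ for some $g_0, g_1 \colon \gS \to \X$, then $Uf \circ Ug_0 = Uf \circ Ug_1$ in $\Set$, so $Ug_0 = Ug_1$ by injectivity of $Uf$, and hence $g_0 = g_1$.

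The substantive step is $(\ref{ConditionMonic}) \Rightarrow (\ref{ConditionInjective})$. My plan is to explicitly construct the ``pullback of $f$ along itself'' inside $\Gs$: define the game $\mathbb{K} = (K, \str_K)$ by
\[
K = \{(x, x') \in X \times X \mid f(x) = f(x')\}, \qquad \str_K(x, x') = (\str_\X(x) \times \str_\X(x')) \cap K,
\]
with the two obvious projections $\pi_0, \pi_1 \colon \mathbb{K} \to \X$. Finite options is immediate; finite time holds because any infinite path in $\mathbb{K}$ would project under $\pi_0$ to an infinite path in $\X$, which cannot exist. The crucial verification is that $\pi_0, \pi_1$ are game morphisms in the sense of \Cref{def:GameMorphism}: graph-preservation is built into the definition of $\str_K$, while path-lifting for $\pi_0$ reduces to the chain of equalities
\[
f(\str_\X(x)) = \str_\Y(f(x)) = \str_\Y(f(x')) = f(\str_\X(x')),
\]
whose outer two equalities encode that $f$ is itself a game morphism, so every $y \in \str_\X(x)$ has a partner $y' \in \str_\X(x')$ with $(y, y') \in K$, giving the required lift. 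By construction $f \pi_0 = f \pi_1$, while any pair $(x_0, x_1) \in K$ with $x_0 \neq x_1$ yields $\pi_0 \neq \pi_1$; monicity of $f$ thus forces $K$ to lie within the diagonal, i.e., $f$ to be injective.

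The main obstacle is the path-lifting verification for $\pi_0, \pi_1$, which is exactly the place where the defining difference between a game morphism and a mere graph morphism (cf.\ \Cref{fig:PathLiftingProperty}) plays a decisive role.
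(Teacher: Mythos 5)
Your proof is correct, and for the substantive implication $(\ref{ConditionMonic})\implies(\ref{ConditionInjective})$ it takes a genuinely different route from the paper. The easy parts ($(\ref{ConditionInjective})\Leftrightarrow(\ref{ConditionSubobject})$ via \Cref{lem:ImageisSubgame} and conservativity of $U$, and $(\ref{ConditionInjective})\Rightarrow(\ref{ConditionMonic})$ via faithfulness) coincide with the paper's treatment. For the hard direction, the paper argues by induction along the well-founded accessibility order on $\Y$: for each $y$ it forms the finite subgame $T=f^{-1}\{y'\mid y'\prec y\}$, adjoins a single new state $\ast$ to build a finite test game $\W$, and checks that each $x\in f^{-1}(y)$ gives a game morphism $g_x\colon\W\to\X$ --- a check that genuinely needs the induction hypothesis that $f$ is already injective below $y$ (to get the backward implication $f(x)\rel_{\Y}f(x')\implies x\rel_{\X}x'$). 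You instead build one global ``kernel'' game $\mathbb{K}$ on $\{(x,x')\mid f(x)=f(x')\}$ and verify that both projections are game morphisms; your path-lifting check uses only the coalgebra-homomorphism equality $f(\str_{\X}(x))=\str_{\Y}(f(x))$ applied at $x$ and $x'$, so no induction is needed, and well-foundedness enters only to see that $\mathbb{K}$ satisfies the finite-time condition (infinite paths project along $\pi_0$). Note also that nothing hinges on $\mathbb{K}$ being the actual pullback in $\Gs$ (which, as the paper's discussion of products warns, would be delicate): monicity quantifies over arbitrary forks, and your fork suffices. Your argument is shorter and more conceptual (the standard kernel-pair test for monomorphisms); the paper's argument has the mild side benefit that its test object $\W$ is finite, so it shows in passing that monomorphisms are detected by maps out of finite games, but that extra information is not needed for the statement.
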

\begin{proof}
    For the equivalence between $(\ref{ConditionInjective})$ and $(\ref{ConditionSubobject})$, see the comment below \Cref{lem:ImageisSubgame}.
    The implication
    $(\ref{ConditionInjective}) \implies (\ref{ConditionMonic})$ immediately follows since the forgetful fucntor $U\colon \Gs \to \Set$ is faithful.

    We prove $(\ref{ConditionMonic})\implies (\ref{ConditionInjective})$. Suppose $f$ is monic. We prove that $\# f^{-1}(y) \leq 1$ by induction on the well-founded order structure $(Y,\acc)$. Assuming that $\# f^{-1}(y') \leq 1$ holds for any $y' \prec y$, we prove $\# f^{-1}(y) \leq 1$. 
    We can assume $\# f^{-1}(y) \geq 1$ since otherwise the target inequality $\# f^{-1}(y) \leq 1$ trivially holds. Assuming $\# f^{-1}(y) \geq 1$, the image of $f$ includes $y$ and hence all of $y'\prec y$ (\Cref{lem:ImageisSubgame}), so the induction hypothesis implies 
    \begin{equation}\label{eq:uniqueYdash}
        \# f^{-1}(y') =1\text{ for any }y'\prec y.
    \end{equation}
     Let us define $T\subset X$ as
    \[
    T \coloneqq \{x\in X\mid f(x)\prec y\}.
    \]
    Since $\{y'\in Y\mid y'\prec y\}$ is a subgame of $\Y$, \Cref{lem:InverseImageOfGames} implies that $T$ is also a subgame of $\X$. Furthermore, $T$ is a finite game due to \Cref{eq:uniqueYdash}. 
    
    Adding a new element $\ast$ to $T$, we extend the finite game $T$ into a new game $\W=(T \coprod \{\ast\}, \rel_{\W})$. In addition to the relations in $T$, we add
    a relation $\ast \rel_{\W} t$ for each $t\in T$ such that $y \rel_{\Y} f(t)$.
    \[
    \ast \rel_{\W} t \iff y\rel_{\Y} f(t)
    \]
    One can prove $\W$ is actually a game, in particular satisfies the finite option condition, due to the finiteness of $T$. Then, for any $x\in f^{-1}(y)$, the function $g_{x} \colon \W \to \X$ defined by
    \[
    g_{x}(w) =
    \begin{cases}
        t &(w=t\in T)\\
        x & (w= \ast)
    \end{cases}
    \]
    is a game morphism. It is because, for any $x'\in \X$,
    \begin{align*}
        x \rel_{\X} x' &\iff y= f(x) \rel_{\Y} f(x')\\
        &\iff x'\in T \textrm{ and }y \rel_{\Y} f(x')\\
        &\iff x'\in T \textrm{ and }\ast \rel_{\W} x',
    \end{align*}
    where the first inverse implication $x\rel_{\X} x' \impliedby f(x)\rel_{\Y}f(x')$ holds due to the following argument: Assuming $f(x)\rel_{\Y}f(x')$, there exists $x\rel_{\X}x''\in X$ such that $f(x'')=f(x')$ since $f$ satisfies the path-lifting property (\Cref{def:GameMorphism}). Due to \Cref{eq:uniqueYdash}, we have $\#f^{-1}(f(x'))=1$, and hence $x''=x'$.
    
    
    For any (possibly non-distinct) $x_0, x_1 \in f^{-1}(y)$, we have a diagram
    \[
    \begin{tikzcd}
        \W\ar[r,"g_{x_0}", shift left]\ar[r,"g_{x_1}"',shift right]&\X\ar[r,"f"]&\Y,
    \end{tikzcd}
    \]
    with the same composite.
    Since we assumed that $f$ is monic, we obtain $g_{x_0} = g_{x_1}$ and $x_0 = x_1$. This completes the proof.
\end{proof}

As $U$ is a faithful left adjoint, a game morphism $f$ is an epimorphism if and only if $f$ is surjective. As a corollary, the decomposition of a game morphism $f\colon \X \to \Y$ through the image game $\X \twoheadrightarrow \Image{f}\rightarrowtail \Y$ provides the epi-mono factorization system of $\Gs$.

\begin{corollary}[Epi-mono orthogonal factorization]\label{cor:Epi-MonoFactorization}
    Any game morphism is uniquely factored into a composition of an epimorphism followed by a monomorphism. This data defines an orthogonal factorization system on $\Gs$.
\end{corollary}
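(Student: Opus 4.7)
The plan is to build the factorization explicitly as the image factorization, identify the two classes of maps, and then verify the lifting property that upgrades this to an orthogonal factorization system.

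First I would pin down what the epimorphisms are. Since $U\colon \Gs\to\Set$ has a right adjoint (\Cref{cor:rightAdjointofU}), $U$ preserves all epimorphisms; and since $U$ is obviously faithful, it reflects both monos and epis. Combined with \Cref{prop:SubgameAndSubobjectAndMono}, this gives the characterization: a game morphism is mono iff it is injective iff (up to iso) a subgame inclusion, and is epi iff its underlying function is surjective. Given any game morphism $f\colon \X\to\Y$, \Cref{lem:ImageisSubgame} shows that $\Image f$ is a subgame of $\Y$, so the set-theoretic factorization $U f = (UX \twoheadrightarrow \Image f \hookrightarrow UY)$ lifts canonically to a game-theoretic factorization $\X \twoheadrightarrow \Image f \rightarrowtail \Y$ of the required (epi, mono) form.

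Next I would prove the diagonal lifting property. Given a commutative square
\[
\begin{tikzcd}
\X \ar[r, twoheadrightarrow, "e"]\ar[d, "u"'] & \Y \ar[d, "v"]\\
\Z \ar[r, rightarrowtail, "m"] & \W
\end{tikzcd}
\]
with $e$ epi and $m$ mono, faithfulness of $U$ and the fact that $\Set$ has the epi--mono factorization system produce a unique function $d\colon UY\to UZ$ with $de=u$ and $md=v$. The heart of the argument is to check that $d$ is a game morphism. For the graph-morphism condition, given $y\to_\Y y'$ choose $x\in X$ with $e(x)=y$; by path-lifting for $e$ there exists $x'$ with $x\to_\X x'$ and $e(x')=y'$, so $d(y)=u(x)\to_\Z u(x')=d(y')$ because $u$ is a graph morphism. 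For the path-lifting property of $d$, given $d(y)\to_\Z z$, again lift $y=e(x)$, then use path-lifting for $u$ to get $x\to_\X x'$ with $u(x')=z$, and set $y'=e(x')$; then $y\to_\Y y'$ and $d(y')=u(x')=z$. Uniqueness of $d$ is automatic from $e$ being epi.

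Finally, uniqueness of the factorization (up to canonical isomorphism) follows from the lifting property by the standard orthogonal-factorization-system argument applied to the two diagonal squares comparing two factorizations $m_1 e_1=m_2 e_2$. I expect the main (and really the only) obstacle is the verification that the set-level diagonal filler $d$ preserves the coalgebra structure on both sides, i.e.\ both the graph-morphism and the path-lifting clauses of \Cref{def:GameMorphism}; this is where the asymmetry of \Cref{def:GameMorphism} with respect to $e$ (used in both directions) is essential, and is exactly what ensures the image-inclusion lives in $\Gs$ rather than merely in an ambient category of graphs.
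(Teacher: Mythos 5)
Your proposal is correct and follows essentially the same route as the paper: identify epimorphisms with surjections (via $U$ being a faithful left adjoint), monomorphisms with subgame inclusions (\Cref{prop:SubgameAndSubobjectAndMono}), and factor through the image game, which is a subgame by \Cref{lem:ImageisSubgame}. Your explicit check that the set-level diagonal filler is itself a game morphism (both clauses of \Cref{def:GameMorphism}, using path-lifting for $e$ and for $u$) is exactly the orthogonality verification the paper leaves implicit, and it is carried out correctly.
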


\begin{remark}
    In fact, this factorization system coincides with the (Epi, Regular mono) factorization system on the coregular category $\Gs$. \revmemo{write more details}
\end{remark}

\subsubsection{Equalizer}\label{SubsubsectionEqualizer}
The third example is equalizers.
\begin{lemma}[Generation and Cogeneration of subgames]\label{lem:CogenerationOfSubgames}
    For a game $\X=(X, \rel)$ and a subset $S\subset X$, 
    \begin{itemize}
        \item there exists the minimum subgame of $\X$ that contains $S$.
        \item there exists the maximum subgame of $\X$ that is contained by $S$.
    \end{itemize} 
\end{lemma}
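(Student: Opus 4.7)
The plan is to use the partial order observation already made: a subgame of $\X = (X, \rel)$ is precisely a downward closed subset with respect to the accessibility order $\acc$ (\Cref{DefinitionAccessibility}). Both constructions then reduce to standard poset-theoretic facts about downward closed subsets.

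For the minimum subgame containing $S$, I would take the accessibility-downward closure
\[
\langle S\rangle \coloneqq \{y \in X \mid \exists s \in S,\ s \acc y\}.
\]
First I verify $\langle S\rangle \supset S$ (since $s \acc s$) and that $\langle S\rangle$ is a subgame: if $y \in \langle S\rangle$ witnessed by $s \acc y$ and $y \rel y'$, then $s \acc y'$, so $y' \in \langle S\rangle$. Minimality is automatic: any subgame $T \supset S$ is closed under $\rel$ and hence, by induction on path length, under $\acc$, so $T \supset \langle S\rangle$.

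For the maximum subgame contained in $S$, I would take
\[
S^{\circ} \coloneqq \{x \in X \mid \forall y \in X,\ x \acc y \Rightarrow y \in S\}.
\]
Then $S^{\circ} \subset S$ (take $y = x$), and $S^{\circ}$ is a subgame: if $x \in S^{\circ}$ and $x \rel x'$, then for any $y$ with $x' \acc y$ we have $x \acc y$, hence $y \in S$, so $x' \in S^{\circ}$. Maximality follows from the observation that for any subgame $T \subset S$ and any $x \in T$, the entire $\acc$-upset of $x$ lies in $T \subset S$, so $x \in S^{\circ}$. Equivalently, one could simply take the union of all subgames contained in $S$, noting that arbitrary unions (and in fact intersections) of downward closed subsets are downward closed; this gives a less constructive but shorter argument.

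Neither direction has a real obstacle: once subgames are identified with downward-closed subsets of the well-founded poset $(X, \acc)$, both statements are immediate instances of the fact that the collection of downward closed subsets of a poset forms a complete sublattice of the powerset. The only thing to be mildly careful about is that we are working with $\acc$ rather than $\rel$; the passage between the two uses only that $\acc$ is the reflexive-transitive closure of $\rel$, which is exactly the definition.
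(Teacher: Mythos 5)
Your proposal is correct and takes essentially the same route as the paper: the paper's proof invokes the adjoint functor theorem for the complete-lattice inclusion of subgames into subsets, and then records exactly the two explicit formulas you give, namely $\{x\in X\mid \exists s\in S,\ s\acc x\}$ and $\{x\in X\mid \forall y\in X,\ (x\acc y \implies y\in S)\}$. Your direct verification of closure and (co)minimality simply spells out what the paper leaves to the lattice-theoretic remark, so there is nothing to add.
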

\begin{proof}
    This is an immediate corollary of the general adjoint functor theorem applied to the complete lattice inclusion from the lattice of subgames into the lattice of subsets.

    Explicitly, the former subgame is constructed as
    \[\{x\in X\mid \exists s \in S, \ s\acc x\},\]
    and the latter is 
    \[\{x\in X\mid \forall y \in X,\  (x\acc y \implies y \in S)\}\]
\end{proof}

The next lemma is essentially the same as \cite[Lemma3.11]{de2024profiniteness}.
\begin{lemma}[Equalizer in $\Gs$]\label{lem:equalizer}
    For any parallel pair of game morphisms $f,g\colon \X \rightrightarrows \Y$, its equalizer $E_{f,g}\rightarrowtail \X \rightrightarrows \Y$ is given by 
    \[
    E_{f,g}=\{x\in X\mid \forall x'\preceq x, f(x')=g(x')\}.
    \]
\end{lemma}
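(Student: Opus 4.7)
The plan is to verify the three things needed: (i) $E_{f,g}$ is a subgame of $\X$, so it carries a canonical game structure and the inclusion is a bona fide game morphism (by \Cref{prop:SubgameAndSubobjectAndMono}); (ii) the two composites $f\circ\iota$ and $g\circ\iota$ agree on $E_{f,g}$; (iii) any game morphism $h\colon \Z\to \X$ with $f\circ h=g\circ h$ factors (uniquely) through $E_{f,g}$. Uniqueness of the factorization is automatic from $\iota$ being monic, so the real content is downward-closure, agreement, and the universal factorization.

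For (i), I would observe that if $x\in E_{f,g}$ and $x\rel y$, then $y\cca x$ by definition of $\acc$, and hence every $y'\cca y$ also satisfies $y'\cca x$ by transitivity of $\cca$; thus $f(y')=g(y')$, giving $y\in E_{f,g}$. For (ii), reflexivity $x\cca x$ and membership immediately yield $f(x)=g(x)$ for all $x\in E_{f,g}$. These two steps are purely bookkeeping with the accessibility order of \Cref{DefinitionAccessibility}.

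The interesting step is (iii), and it is precisely here that the path-lifting condition from \Cref{def:GameMorphism} does its work. Given $h\colon \Z\to \X$ with $fh=gh$, fix $z\in Z$ and any $x'\cca h(z)$ in $\X$; by definition of $\cca$ there is a chain $h(z)=x_0\rel x_1\rel\cdots\rel x_n=x'$ in $\X$. I would then inductively lift this chain to a chain $z=z_0\rel z_1\rel\cdots\rel z_n$ in $\Z$ with $h(z_i)=x_i$, using path-lifting at each step. The assumption $fh=gh$ then gives $f(x')=f(h(z_n))=g(h(z_n))=g(x')$, which shows $h(z)\in E_{f,g}$, so $h$ factors set-theoretically through $E_{f,g}$; the factorization is a game morphism because $\iota$ is a subgame inclusion (and $U$ reflects this structure, cf. \Cref{lem:CocompletenessAndCreationOfColimitsConservative}).

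The only place where anything subtle happens is in step (iii): without path-lifting one could not conclude that $f$ and $g$ agree on the entire downward closure of $h(z)$, and so the na\"ive candidate $\{x\in X\mid f(x)=g(x)\}$ (which is what one would use in $\Set$) would fail to be stable under $\rel$ and would not carry the equalizer. The formula $\{x\mid \forall x'\cca x,\ f(x')=g(x')\}$ is precisely the largest downward-closed subset contained in the set-theoretic equalizer, exactly matching the cogeneration construction of \Cref{lem:CogenerationOfSubgames}, so one could alternatively describe $E_{f,g}$ as the maximal subgame of $\X$ contained in $\{x\in X\mid f(x)=g(x)\}$ and derive the explicit formula from that construction.
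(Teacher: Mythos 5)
Your proof is correct and follows essentially the same route as the paper: the paper simply notes that $\Image{h}$ is a subgame (by \Cref{lem:ImageisSubgame}) contained in the set-theoretic equalizer and that $E_{f,g}$ is the maximal subgame inside it (\Cref{lem:CogenerationOfSubgames}), which is exactly the alternative you mention at the end. Your explicit chain-lifting induction in step (iii) just unpacks the path-lifting content of the image-is-a-subgame lemma, and your checks (i)--(ii) make explicit what the paper leaves implicit.
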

\begin{proof}
    For any game morphism $h\colon \W \to \X$ such that $f\circ h =g \circ h$, the image $\Image{h}$ is a subgame contained by the set-theoretic equalizer $E_0=\{x\in X\mid f(x) =g(x)\}$. Since $E_{f,g}$ is the maximum subgame contained by $E_0$ in the sense of \Cref{lem:CogenerationOfSubgames}, we have $\Image{h} \subset E_{f,g}$, and $h$ factors through the subgame inclusion $E_{f,g}\rightarrowtail \X$. The uniqueness part of the universality trivially holds since $E_{f,g}\rightarrowtail \X$ is injective.
\end{proof}

\subsubsection{Digression: General limits and Stirling numbers}
As $U$ is not continuous, it is not easy to calculate genral limits in $\Gs$. However, there is a (non-efficient) way to calculate limits using the cofree games functor (\Cref{ssec:ForgetfulCofreeAdjunction}):
Take an arbitrary small diagram $F\colon  J \to \Gs$. Due to \Cref{Cor:CompletenessOfGames}, we know that $\lim F$ exists. So we have
\begin{align*}
    \Gs(\X, \lim F) \cong  & \mathbf{Cone}_{\Gs}(\X, F)\\
    \subset & \mathbf{Cone}_{\Set}(U\X, UF)\\
    \cong  & \Set(U\X, \lim UF)\\
    \cong &\Gs(\X, R\lim UF),
\end{align*}
where $R$ denotes the cofree game functor, which is the right adjoint to $U$ (\Cref{prop:ForgetfulCofreeadjunction}). Therefore, we conclude that $\lim F$ is a subgame of $R\lim U F$, whose underlying set is given by the $\lim UF$-labeled hereditarily finite set $\H_{\lim UF}$. As this observation works for any faithful left adjoint functor $U$, it loses a lot of information and it is not always easy to figure out which subgame is the limit.

Let us demonstrate how general limits are difficult to calculate. For a positive integer $n>0$, let $\Star_n$ denote the $(n+1)$-state game that looks like \Cref{fig:TheStarGame}.
\begin{figure}[ht]
\centering
\begin{tikzpicture}[>=Latex, thick, scale=1.0]

  \def\n{5} 
  \def\dx{1.5} 
  \def\yA{1.5} 
  \def\yB{0}   

  \node[circle, fill=black, inner sep=3pt] (A) at (0,\yA) {};

  \foreach \i in {1,...,\n} {
    \pgfmathsetmacro{\x}{(\i - (\n+1)/2)*\dx}
    \node[circle, fill=black, inner sep=3pt] (B\i) at (\x,\yB) {};
    \draw[->] (A) -- (B\i);
  }
\end{tikzpicture}
\caption{The game $\Star_n$ for $n=5$}
\label{fig:TheStarGame}
\end{figure}

Then, the product game $\Star_2 \times \Star_3$ looks like \Cref{fig:TheProductGame}.
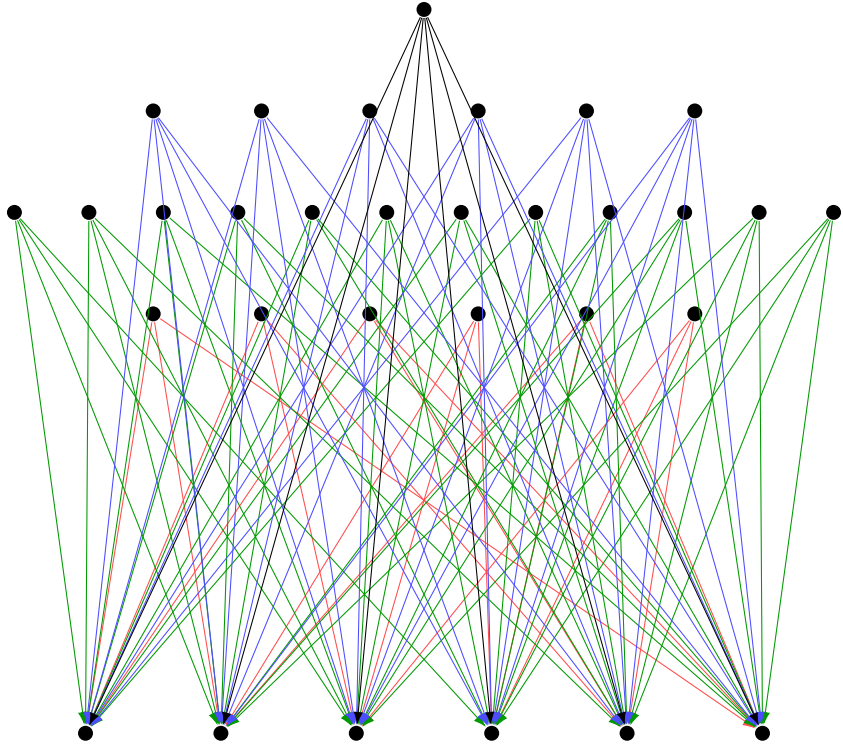
\begin{figure}[ht]
\centering
\begin{tikzpicture}[>=Latex, thick, scale=0.9]
  \tikzset{
    edgeThree/.style={->, draw=red!70, thin},
    edgeFour/.style={->, draw=green!60!black, thin},
    edgeFive/.style={->, draw=blue!70, thin},
    edgeSix/.style={->, draw=black, thin}
  }


  \node[circle, fill=black, inner sep=2pt] (B00) at (-5.00, -4) {};
  \node[circle, fill=black, inner sep=2pt] (B01) at (-3.00, -4) {};
  \node[circle, fill=black, inner sep=2pt] (B02) at (-1.00, -4) {};
  \node[circle, fill=black, inner sep=2pt] (B10)  at (1.00, -4) {};
  \node[circle, fill=black, inner sep=2pt] (B11)  at (3.00, -4) {};
  \node[circle, fill=black, inner sep=2pt] (B12)  at (5.00, -4) {};

  \node[circle, fill=black, inner sep=2pt] (A31) at (-4.00, 2.20) {};
  \node[circle, fill=black, inner sep=2pt] (A32) at (-2.40, 2.20) {};
  \node[circle, fill=black, inner sep=2pt] (A33) at (-0.80, 2.20) {};
  \node[circle, fill=black, inner sep=2pt] (A34) at (0.80, 2.20) {};
  \node[circle, fill=black, inner sep=2pt] (A35) at (2.40, 2.20) {};
  \node[circle, fill=black, inner sep=2pt] (A36) at (4.00, 2.20) {};
  \node[circle, fill=black, inner sep=2pt] (A41) at (-6.05, 3.70) {};
  \node[circle, fill=black, inner sep=2pt] (A42) at (-4.95, 3.70) {};
  \node[circle, fill=black, inner sep=2pt] (A43) at (-3.85, 3.70) {};
  \node[circle, fill=black, inner sep=2pt] (A44) at (-2.75, 3.70) {};
  \node[circle, fill=black, inner sep=2pt] (A45) at (-1.65, 3.70) {};
  \node[circle, fill=black, inner sep=2pt] (A46) at (-0.55, 3.70) {};
  \node[circle, fill=black, inner sep=2pt] (A47) at (0.55, 3.70) {};
  \node[circle, fill=black, inner sep=2pt] (A48) at (1.65, 3.70) {};
  \node[circle, fill=black, inner sep=2pt] (A49) at (2.75, 3.70) {};
  \node[circle, fill=black, inner sep=2pt] (A410) at (3.85, 3.70) {};
  \node[circle, fill=black, inner sep=2pt] (A411) at (4.95, 3.70) {};
  \node[circle, fill=black, inner sep=2pt] (A412) at (6.05, 3.70) {};
  \node[circle, fill=black, inner sep=2pt] (A51) at (-4.00, 5.20) {};
  \node[circle, fill=black, inner sep=2pt] (A52) at (-2.40, 5.20) {};
  \node[circle, fill=black, inner sep=2pt] (A53) at (-0.80, 5.20) {};
  \node[circle, fill=black, inner sep=2pt] (A54) at (0.80, 5.20) {};
  \node[circle, fill=black, inner sep=2pt] (A55) at (2.40, 5.20) {};
  \node[circle, fill=black, inner sep=2pt] (A56) at (4.00, 5.20) {};
  \node[circle, fill=black, inner sep=2pt] (A61) at (0.00, 6.70) {};

  \draw[edgeThree] (A31) -- (B00);
  \draw[edgeThree] (A31) -- (B01);
  \draw[edgeThree] (A31) -- (B12);
  \draw[edgeThree] (A32) -- (B00);
  \draw[edgeThree] (A32) -- (B02);
  \draw[edgeThree] (A32) -- (B11);
  \draw[edgeThree] (A33) -- (B00);
  \draw[edgeThree] (A33) -- (B11);
  \draw[edgeThree] (A33) -- (B12);
  \draw[edgeThree] (A34) -- (B01);
  \draw[edgeThree] (A34) -- (B02);
  \draw[edgeThree] (A34) -- (B10);
  \draw[edgeThree] (A35) -- (B01);
  \draw[edgeThree] (A35) -- (B10);
  \draw[edgeThree] (A35) -- (B12);
  \draw[edgeThree] (A36) -- (B02);
  \draw[edgeThree] (A36) -- (B10);
  \draw[edgeThree] (A36) -- (B11);
  \draw[edgeFour] (A41) -- (B00);
  \draw[edgeFour] (A41) -- (B01);
  \draw[edgeFour] (A41) -- (B02);
  \draw[edgeFour] (A41) -- (B10);
  \draw[edgeFour] (A42) -- (B00);
  \draw[edgeFour] (A42) -- (B01);
  \draw[edgeFour] (A42) -- (B02);
  \draw[edgeFour] (A42) -- (B11);
  \draw[edgeFour] (A43) -- (B00);
  \draw[edgeFour] (A43) -- (B01);
  \draw[edgeFour] (A43) -- (B02);
  \draw[edgeFour] (A43) -- (B12);
  \draw[edgeFour] (A44) -- (B00);
  \draw[edgeFour] (A44) -- (B01);
  \draw[edgeFour] (A44) -- (B10);
  \draw[edgeFour] (A44) -- (B12);
  \draw[edgeFour] (A45) -- (B00);
  \draw[edgeFour] (A45) -- (B01);
  \draw[edgeFour] (A45) -- (B11);
  \draw[edgeFour] (A45) -- (B12);
  \draw[edgeFour] (A46) -- (B00);
  \draw[edgeFour] (A46) -- (B02);
  \draw[edgeFour] (A46) -- (B10);
  \draw[edgeFour] (A46) -- (B11);
  \draw[edgeFour] (A47) -- (B00);
  \draw[edgeFour] (A47) -- (B02);
  \draw[edgeFour] (A47) -- (B11);
  \draw[edgeFour] (A47) -- (B12);
  \draw[edgeFour] (A48) -- (B00);
  \draw[edgeFour] (A48) -- (B10);
  \draw[edgeFour] (A48) -- (B11);
  \draw[edgeFour] (A48) -- (B12);
  \draw[edgeFour] (A49) -- (B01);
  \draw[edgeFour] (A49) -- (B02);
  \draw[edgeFour] (A49) -- (B10);
  \draw[edgeFour] (A49) -- (B11);
  \draw[edgeFour] (A410) -- (B01);
  \draw[edgeFour] (A410) -- (B02);
  \draw[edgeFour] (A410) -- (B10);
  \draw[edgeFour] (A410) -- (B12);
  \draw[edgeFour] (A411) -- (B01);
  \draw[edgeFour] (A411) -- (B10);
  \draw[edgeFour] (A411) -- (B11);
  \draw[edgeFour] (A411) -- (B12);
  \draw[edgeFour] (A412) -- (B02);
  \draw[edgeFour] (A412) -- (B10);
  \draw[edgeFour] (A412) -- (B11);
  \draw[edgeFour] (A412) -- (B12);
  \draw[edgeFive] (A51) -- (B00);
  \draw[edgeFive] (A51) -- (B01);
  \draw[edgeFive] (A51) -- (B02);
  \draw[edgeFive] (A51) -- (B10);
  \draw[edgeFive] (A51) -- (B11);
  \draw[edgeFive] (A52) -- (B00);
  \draw[edgeFive] (A52) -- (B01);
  \draw[edgeFive] (A52) -- (B02);
  \draw[edgeFive] (A52) -- (B10);
  \draw[edgeFive] (A52) -- (B12);
  \draw[edgeFive] (A53) -- (B00);
  \draw[edgeFive] (A53) -- (B01);
  \draw[edgeFive] (A53) -- (B02);
  \draw[edgeFive] (A53) -- (B11);
  \draw[edgeFive] (A53) -- (B12);
  \draw[edgeFive] (A54) -- (B00);
  \draw[edgeFive] (A54) -- (B01);
  \draw[edgeFive] (A54) -- (B10);
  \draw[edgeFive] (A54) -- (B11);
  \draw[edgeFive] (A54) -- (B12);
  \draw[edgeFive] (A55) -- (B00);
  \draw[edgeFive] (A55) -- (B02);
  \draw[edgeFive] (A55) -- (B10);
  \draw[edgeFive] (A55) -- (B11);
  \draw[edgeFive] (A55) -- (B12);
  \draw[edgeFive] (A56) -- (B01);
  \draw[edgeFive] (A56) -- (B02);
  \draw[edgeFive] (A56) -- (B10);
  \draw[edgeFive] (A56) -- (B11);
  \draw[edgeFive] (A56) -- (B12);
  \draw[edgeSix] (A61) -- (B00);
  \draw[edgeSix] (A61) -- (B01);
  \draw[edgeSix] (A61) -- (B02);
  \draw[edgeSix] (A61) -- (B10);
  \draw[edgeSix] (A61) -- (B11);
  \draw[edgeSix] (A61) -- (B12);
\end{tikzpicture}
\caption{The pruduct game $\Star_2\times \Star_3$, which categorifies\\ $2!S(n,2)\times 3!S(n,3) = 1\cdot 6!S(n,6)+6\cdot 5!S(n,5)+ 12\cdot4!S(n,4)+6\cdot 3!S(n,3)$.}
\label{fig:TheProductGame}
\end{figure} What is happning there? One interpretation is given by regarding $\Star_k$ as an categorification of \demph{the Stirling number} of the second kind! 

One can easily prove that $\#\Gs(\Star_n, \Star_k)$ is the number of surjections from an $n$-element set to an $m$-element set, which is known to be equal to $k! S(n,k)$, where $S(n,k)$ is the Stirling number of the second kind. 
\[
\#\Gs(\Star_n, \Star_k)=k!S(n,k)
\]
Therefore, the product game $\Star_2\times \Star_3$ should have information about $2!S(n,2)\times 3! S(n, 3)$ since the following equality holds.
\[
\Gs(\Star_n, \Star_2\times \Star_3) \cong \Gs(\Star_n, \Star_2)\times \Gs(\Star_n, \Star_3)
\]
In fact, the product game (or, more precisely, the isomorphism to its concrete description \Cref{fig:TheProductGame}) is a categorification of the multiplicative identity of the Stirling number:
\[
2!S(n,2)\times 3!S(n,3) = 1\cdot 6!S(n,6)+6\cdot 5!S(n,5)+ 12\cdot4!S(n,4)+6\cdot 3!S(n,3).
\]
Notice that these coefficients $1,6,12,6$ appear in \Cref{fig:TheProductGame}.

\revmemo{
In this subsection, we will explicitly construct the binary product of games. Its existence is already proven (Corollary \Cref{CorollaryCompletenessOfGames}). Our plan is similar to the construction of the subobject classifier (subsection \Cref{SubsectionSubobjectClassifier}). That is, utilizing the labeled hereditarily finite sets.
\begin{definition}\label{DefinitionProduct}
    Let $\X=(X,\rel_{\X})$ and $\Y= (Y, \rel_{\Y})$ two games.
    A $X\times Y$-labeled hereditarily finite set $(S, (x,y)) \in \H_{{X\times Y}}$, where $S$ is a finite set of $X\times Y$-labeled hereditarily finite sets $S=\{(S_i, (x_i,y_i))\}_{i=1}^{n}$, is \demph{enumerative}, if 
    \begin{enumerate}
        \item every element $(S_i, (x_i,y_i))$ is enumerative, 
        \item $\str_{\X} (x) = \{x_i\}_{i=1}^{n}$, and
        \item $\str_{\Y} (y) = \{y_i\}_{i=1}^{n}$,
    \end{enumerate}
    where $\str_{\X}$ and $\str_{\Y}$ denote the associated coalgebra structure functions.
    The subgame of $\H_{{X\times Y}}$, spanned by all enumerative elements, is denoted by $\X \times \Y$.
\end{definition}
\begin{proposition}\label{ProppositionProduct}
    For two games $\X=(X,\rel_{\X})$ and $\Y= (Y, \rel_{\Y})$, the game 
    $\X\times \Y$ 
    gives the categorical product of $\X$ and $\Y$.
\end{proposition}
\memo{Write one example.}
\memo{On Infinite Products}
}

Even the inocent-looking example $\Star_n\times \Star_m$ is interwinded with the combinatorics of the Stirling numbers.
By this example, we would like to emphasize the difficulty of the calculation of general limits in $\Gs$.

\subsection{\texorpdfstring{$\Gs$ is comonadic over $\Set$}{Games is comonadic over Set}}

The following proof is essentially the same as the comonadicity of locally finite Kriple frames \cite[Theorem 5.4]{de2024profiniteness}.
\begin{proposition}\label{prop:Comonadicity}
    The forgetful functor $U\colon \Gs \to \Set$ is comonadic.
\end{proposition}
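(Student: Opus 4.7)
The plan is to apply Beck's comonadicity theorem: since $U$ already admits a right adjoint $R$ (Proposition~\ref{prop:ForgetfulCofreeadjunction}) and is conservative (Lemma~\ref{lem:CocompletenessAndCreationOfColimitsConservative}), it will suffice to show that $U$ creates equalizers of $U$-split parallel pairs. The category $\Gs$ has all equalizers (Lemma~\ref{lem:equalizer}), and any lift of a given set-theoretic equalizer to $\Gs$ is forced to be the unique subgame with that underlying set by Proposition~\ref{prop:SubgameAndSubobjectAndMono}; so the remaining content is really the assertion that $U$ \emph{preserves} equalizers of $U$-split pairs.

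Unpacking, let $f, g \colon \X \rightrightarrows \Y$ be a parallel pair in $\Gs$ whose image admits a split equalizer $(e \colon E \to UX,\ s \colon UX \to E,\ t \colon UY \to UX)$ in $\Set$, i.e.\ $s \circ e = \id_E$, $t \circ Uf = \id_{UX}$ and $t \circ Ug = e \circ s$. Write $E_0 = \Image(e) = \{x \in UX \mid f(x) = g(x)\}$ for the underlying set of the $\Set$-equalizer; by Lemma~\ref{lem:equalizer}, the $\Gs$-equalizer has underlying set $E_{f,g} = \{x \in UX \mid \forall x' \cca x,\ f(x') = g(x')\}$, the largest subgame of $\X$ contained in $E_0$. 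The inclusion $E_{f,g} \subseteq E_0$ is tautological, so the task reduces to showing that $E_0$ is itself downward-closed in the accessibility order, i.e.\ a subgame of $\X$.

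This is the crux, and it is where the splitting must interact with the path-lifting axiom (clause~(2) of Definition~\ref{def:GameMorphism}). Given $x \in E_0$ and $x \rel x'$, the argument I would run goes as follows: since $g(x) = f(x) \rel f(x')$, path-lifting of $g$ produces some $\tilde{x}$ with $x \rel \tilde{x}$ and $g(\tilde{x}) = f(x')$; applying $t$ in two ways,
\[
x' \;=\; t(f(x')) \;=\; t(g(\tilde{x})) \;=\; e(s(\tilde{x})) \;\in\; \Image(e) \;=\; E_0,
\]
using $t \circ Uf = \id$ first and $t \circ Ug = e \circ s$ second. An obvious induction on accessibility distance then upgrades this to $E_0 = E_{f,g}$. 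The main obstacle to overcome is that $U$, being a left adjoint, has no a priori right to preserve any equalizers; the content is precisely that the $U$-splitting hypothesis is exactly strong enough to conspire with path-lifting to force the naive set-theoretic equalizer to already be a subgame. With that in hand, Beck's theorem delivers comonadicity, and the ``in particular'' clauses on colimits are then just restatements of Lemma~\ref{lem:CocompletenessAndCreationOfColimitsConservative} via the standard properties of comonadic functors.
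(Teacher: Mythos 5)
Your proof is correct, and it is a genuine (if close) variant of the paper's argument rather than a reproduction of it. The paper invokes the dual \emph{reflexive} form of Beck's theorem (conservative left adjoint, complete domain, preservation of equalizers of \emph{coreflexive} pairs), and its crux uses the common retraction $r$ in $\Gs$: after path-lifting produces $x''$ with $f(x'')=g(x')$, applying $r$ forces $x''=x'$. You instead run the \emph{precise} form with $U$-split pairs, and the extra datum you exploit is the set-level splitting $t$ rather than a game-level retraction: after path-lifting produces $\tilde{x}$ with $g(\tilde{x})=f(x')$, the identities $t\circ Uf=\id$ and $t\circ Ug=e\circ s$ force $x'\in\Image(e)$. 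Both arguments share the real content --- \Cref{lem:equalizer} identifies the $\Gs$-equalizer as the largest subgame inside the set-theoretic equalizer $E_0$, and path-lifting plus the auxiliary splitting/retraction datum shows $E_0$ is already downward closed --- so the difference is which hypothesis package of Beck's theorem one verifies. Your route only needs equalizers of the pairs that actually occur (the paper's completeness statement, \Cref{Cor:CompletenessOfGames}, is not required), and in principle does not even need conservativity if one checks creation directly; the paper's route keeps everything inside $\Gs$ (the retraction $r$ is a game morphism) at the cost of quoting completeness and conservativity. Two small points to tidy: state explicitly that $Uf\circ e=Ug\circ e$ is part of the split-equalizer data (you use $\Image(e)=E_0$, which needs it together with injectivity of $e$), and when you say conservativity plus existence plus preservation upgrades to creation, note that this is the standard ``up to isomorphism'' form of the comonadicity theorem, which suffices for the conclusion.
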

\begin{proof}
    We use the reflexive tripleability theorem (see \cite[Propotition 5.5.8]{riehl2017category}). As we have already proven that $U$ is a consevative left adjoint functor (\Cref{lem:CocompletenessAndCreationOfColimitsConservative} and \Cref{cor:rightAdjointofU}) and $\Gs$ is complete (\Cref{Cor:CompletenessOfGames}), it suffices to prove that $U$ preserves equalizers of coreflexive pairs.

    Take an arbitrary coreflexive pair $f,g\colon \X\rightrightarrows\Y$ with a common retract $r\colon \Y \to \X$ such that $r\circ f = r\circ g = \id_{\X}$. Then, due to \Cref{lem:equalizer}, the equalizer is given by $E_{f,g} =\{x\in X\mid \forall x' \preceq x,\; f(x') =g(x')\}$. We should prove that this coincides with the set-theoretic equalizer $\{x\in X\mid f(x)=g(x)\}$.

    Therefore, it suffices to prove that, for any $x\rel_{\X}x'$, $f(x) =g(x)$ implies $f(x')=g(x')$. Assuming $f(x)=g(x)$ and  $x\rel_{\X}x'$, we can take $x\rel_{\X} x''$ such that $f(x'')=g(x')$ by applying the path-lifting property to $f(x)=g(x)\rel_{\Y} g(x')$. By applying the common retract $r$ to the equation $f(x'')=g(x')$, we obtain $x''=x'$, which implies $f(x')=f(x'')=g(x')$. This completes the proof.
\end{proof}

\subsection{\texorpdfstring{$\Gs$}{Games} has a subobject classifier}\label{ssec:SubobjectClassifier}
In this subsection, we will give an explicit description of the subobject classifier of the category of games.

Our idea of the construction is simple: utilize the cofree-forgetful adjunction (\Cref{prop:ForgetfulCofreeadjunction}). As subobjects are just subsets satisfying some properties (\Cref{prop:SubgameAndSubobjectAndMono}), we have the following canonical injection 
\begin{equation}\label{eq:subsetClassification}
    \mathrm{Sub}_{\Gs}(\X) \subset \Pow(X) \cong \Set(X,\{\top,\bot\}) \cong \Gs(\X,\H_{\{\top,\bot\}}).
\end{equation}
Therefore, by the Yoneda lemma, if a subobject classifier $\Omega$ exists, then it should be a subobject of the game of truth-values-labeled hereditarily finite sets $\Omega \subset \H_{\{\top,\bot\}}$.

Let us recall how the correspondence $\Pow(X) \cong \Set(X,\{\top,\bot\}) \cong \Gs(\X,\H_{\{\top,\bot\}})$ in (\ref{eq:subsetClassification}) works. Let $\X=(X, \str)$ be a game and $S\subset X$ be a subset. Then, the corresponding game morphism $\chi_{S}\colon \X \to \H_{\{\top, \bot\}}$ sends an element $x\in X$ to
\[
\chi_{S}(x) = \left(\{\chi_{S}(x')\mid x\rel_{\X} x'\}, x\in S\right) \in \Pf\left(\H_{\{\top, \bot\}}\right)\times \{\top, \bot\} =\H_{\{\top, \bot\}},
\]
where ``$x\in S$" denotes the element of $\{\top, \bot\}$ corresponding to the truth value of the proposition ``$x\in S$."
Conversely, for a given game morphism $f\colon \X \to \H_{\{\top, \bot\}}$, the corresponding subset is recovered by taking $\{x\in X\mid \text{the label of $f(x)$ is $\top$}\}$.
An example of such $\chi_{S}$ (in the case where $S$ is a subgame of $\X$) is visualized in \Cref{fig:SubobjectClassification}, where ${\color{red}\top}$ and $\bot$ are colored like (\ref{eq:RGBhereditarilyfiniteset}).
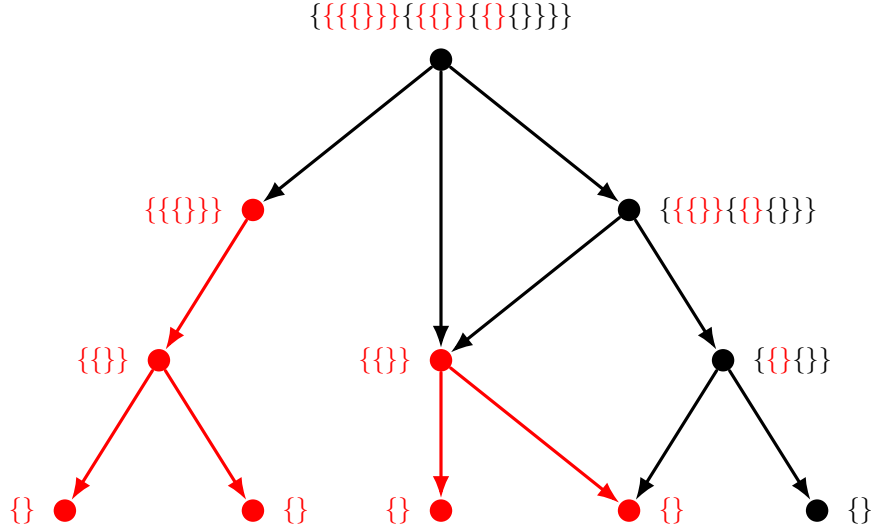
\begin{figure}[ht]
\centering
\begin{tikzpicture}[>=Latex, thick, scale=1.0]

  \tikzset{
    v/.style   ={circle, fill=black, inner sep=3pt},    
    vred/.style={circle, fill=red, inner sep=3pt},
    e/.style   ={->, draw=black, line cap=round, very thick},
    ered/.style={->, draw=red, line cap=round, very thick},
    lab/.style ={font=\small, inner sep=1pt, outer sep=2pt, text=black},
    labr/.style={lab, text=red}
  }

  \node[v] (r)   at (0,3)  {};
  \node[above=1mm of r] {$\pd{\pr{\pr{\pr{}}}\pd{\pr{\pr{}}\pd{\pr{}\pd{}}}}$};
  \node[v] (c)   at (2.5,1) {};
  \node[right=1mm of c] {$\pd{\pr{\pr{}}\pd{\pr{}\pd{}}}$};
  \node[v] (g)    at (3.75,-1) {};
  \node[right=1mm of g] {$\pd{\pr{}\pd{}}$};
  \node[v] (f1)  at (5,-3) {};
  \node[right=1mm of f1] {$\pd{}$};

  \node[vred] (a)   at (-2.5,1) {};
  \node[left=1mm of a] {$\pr{\pr{\pr{}}}$};
  \node[vred] (l2) at (-3.75,-1) {};
  \node[left=1mm of l2] {$\pr{\pr{}}$};
  \node[vred] (l3) at (-5,-3) {};
  \node[left=1mm of l3] {$\pr{}$};
  \node[vred] (l4) at (-2.5,-3) {};
  \node[right=1mm of l4] {$\pr{}$};
  \node[vred] (m1) at (0,-1)  {};
  \node[left=1mm of m1] {$\pr{\pr{}}$};
  \node[vred] (m2) at (0,-3) {};
  \node[left=1mm of m2] {$\pr{}$};
  \node[vred] (rr) at (2.5,-3) {};
  \node[right=1mm of rr] {$\pr{}$};

  \draw[e] (r) -- (a);
  \draw[e] (r) -- (c);
    \draw[e] (c) -- (m1);
    \draw[e]   (c) -- (g);
        \draw[e]   (g) -- (f1);
        \draw[e]   (g) -- (rr);
  \draw[e] (r) -- (m1);

  \draw[ered] (a) -- (l2);
    \draw[ered] (l2) -- (l3);
    \draw[ered] (l2) -- (l4);  
  \draw[ered] (m1) -- (m2);
  \draw[ered] (m1) -- (rr);
\end{tikzpicture}
\caption{The characteristic map $\chi_{S}$ for {\color{red} a subgame $S$}}
\label{fig:SubobjectClassification}
\end{figure}

\begin{definition}\label{def:TruthClosed}
    A $\{\top, \bot\}$-labeled hereditarily finite set $A\in \H_{\{\top,\bot\}}$ is \demph{truth-closed} if 
    \begin{enumerate}
        \item if $A$ itself is labeled by $\top$, then every element of $A$ is also labeled by $\top$, and
        \item every element of $A$ is truth-closed.
    \end{enumerate}
\end{definition}
Due to condition (2), truth-closed $\{\top, \bot\}$-labeled hereditarily finite sets form a subgame of $\H_{\{\top,\bot\}}$. By coloring like (\ref{eq:RGBhereditarilyfiniteset}), the typical examples of a truth-closed $\{{\color{red}\top}, \bot\}$-labeled set include $\pr{}$, $\pd{}$, $\pd{\pd{\pd{\pr{\pr{}}}}}$, and $\pd{\pd{\pd{\pr{}}}\pr{\pr{}\pr{\pr{}}}\pd{\pd{}}}$. Non-examples include $\pr{\pd{}}$ and $\pd{\pd{}\pr{\pd{\pr{}}}}$. Intuitively, a $\{{\color{red}\top}, \bot\}$-labeled hereditarily finite set is truth-closed if and only if no black parenthesis appears within any red parenthesis.

\begin{notation}
The subgame of $\H_{\{\top,\bot\}}$ consisting of all truth-closed $\{\top,\bot\}$-labeled hereditarily finite sets is denoted by $\Omega$.
\end{notation}
The game $\Omega$ is visualized in \Cref{fig:SubobjectClassifier}.

\begin{figure}[ht]
\centering
\begin{tikzpicture}[>=Latex, thick, node distance=20mm, scale=0.16]

  \usetikzlibrary{fadings}
  \tikzfading[name=fade down, top color=transparent!0, bottom color=transparent!100]
  \tikzset{
    thfs/.style={circle, draw=black, inner sep=2pt, minimum size=8mm, align=center, very thick},
    Rthfs/.style={circle, draw=red, inner sep=2pt, minimum size=8mm, align=center, very thick},
    edge/.style={->, draw=black!70, line cap=round, very thin},
    fedge/.style={->, draw=black!70, line cap=round, very thin, path fading=fade down}
  }
  
  \newcommand{\XA}{\pd{}}%
  \newcommand{\XB}{\pd{\pd{}}}%
  \newcommand{\XC}{\pd{\pr{}}}%
  \newcommand{\XD}{\pd{\pd{}\pr{}}}%
  \newcommand{\XE}{\pr{}}%
  \newcommand{\XF}{\pr{\pr{}}}%

  \def\dx{5.0}
  \def\dy{20.0}
  \def\yBzero{\dy}
  \def\yBone{2*\dy}
  \def\yBtwo{3*\dy}
  \def\yBthr{4*\dy}
  \def\yBfor{5*\dy}
  \def\yBfiv{6*\dy}
  \def\yBsix{7*\dy}
  \def\yRed{-1*\dy}
  \def\fsize{\tiny}

  \newcommand{\Snode}[5]{%
    \node[thfs] (#3) at (#1,#2) {\tiny $#4$};
    \foreach \m in {#5} { \draw[edge, black] (#3) -- (\m); }%
  }
  \newcommand{\Fnode}[5]{%
    \node[thfs] (#3) at (#1,#2) {\tiny $#4$};
    \foreach \m in {#5} { \draw[fedge, black] (#3) -- (\m); }%
  }
  \newcommand{\Rnode}[5]{%
    \node[Rthfs] (#3) at (#1,#2) {\tiny $#4$};
    \foreach \m in {#5} { \draw[edge, red] (#3) -- (\m); }%
  }

  \Rnode{8*\dx}{0}{R}{\pr{}}{}
  \Snode{-8*\dx}{0}{P}{\pd{}}{}

  \Snode{-8.1*\dx}{\dy}{PP}{\pd{\pd{}}}{P}
  \Snode{-3*\dx}{\dy}{PR}{\pd{\pr{}}}{R}
  \Snode{3*\dx}{\dy}{PPR}{\pd{\pd{}\pr{}}}{R, P}
  \Rnode{8.1*\dx}{\dy}{RR}{\pr{\pr{}}}{R}

  \Snode{-8.2*\dx}{2*\dy}{BPP}{\pd{\XB}}{PP}
  \Snode{-4*\dx}{2*\dy}{BPR}{\pd{\XC}}{PR}
  \Snode{0*\dx}{2*\dy}{BPPR}{\pd{\XD}}{PPR}
  \Snode{4*\dx}{2*\dy}{BRR}{\pd{\XF}}{RR}
  \Rnode{8.2*\dx}{2*\dy}{RRR}{\pr{\XF}}{RR}

  \Snode{ -10*\dx}{3*\dy}{B2AB}{\pd{\XA\XB}}{P,PP}
  \Fnode{-7.4*\dx}{3*\dy}{B2AC}{\pd{\XA\XC}}{P,PR}
  \Fnode{  -4*\dx}{3*\dy}{B2AD}{\pd{\XA\XD}}{P,PPR}
  \Fnode{-1.3*\dx}{3*\dy}{B2AE}{\pd{\XA\XE}}{P,R}
  \Fnode{ 1.3*\dx}{3*\dy}{B2AF}{\pd{\XA\XF}}{P,RR}
  \Fnode{   4*\dx}{3*\dy}{B2BC}{\pd{\XB\XC}}{PP,PR}
  \Fnode{ 7.4*\dx}{3*\dy}{B2BD}{\pd{\XB\XD}}{PP,PPR}
  \Rnode{  10*\dx}{3*\dy}{R3}{\pr{\XE\XF}}{R,RR}

  \Fnode{ -10*\dx}{4*\dy}{B2BE}{\pd{\XB\XE}}{PP,R}
  \Fnode{-7.4*\dx}{4*\dy}{B2BF}{\pd{\XB\XF}}{PP,RR}
  \Fnode{  -4*\dx}{4*\dy}{B2CD}{\pd{\XC\XD}}{PR,PPR}
  \Fnode{-1.3*\dx}{4*\dy}{B2CE}{\pd{\XC\XE}}{PR,R}
  \Fnode{ 1.3*\dx}{4*\dy}{B2CF}{\pd{\XC\XF}}{PR,RR}
  \Fnode{   4*\dx}{4*\dy}{B2DE}{\pd{\XD\XE}}{PPR,R}
  \Fnode{ 7.4*\dx}{4*\dy}{B2DF}{\pd{\XD\XF}}{PPR,RR}
  \Fnode{  10*\dx}{4*\dy}{B2EF}{\pd{\XE\XF}}{R,RR}

  \node[] (DOTS) at (0,4.4*\dy) {$\vdots$};

\end{tikzpicture}
\caption{A partial sketch of the subobject classifier $\Omega$}
\label{fig:SubobjectClassifier}
\end{figure}

\begin{proposition}[Subobject classifier]\label{prop:SubobjectClassifier}
    The game $\Omega$ is the subobject classifier of the category of games $\Gs$.
\end{proposition}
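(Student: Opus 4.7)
The plan is to verify the two defining properties of a subobject classifier using the bijection $\Set(X, \{\top, \bot\}) \cong \Gs(\X, \H_{\{\top,\bot\}})$ coming from the cofree game adjunction (\Cref{prop:ForgetfulCofreeadjunction}) combined with the description of $\chi_S$ given just above the statement. First, I would define the distinguished map $\mathsf{true}\colon \H \to \Omega$ by the recursion $\mathsf{true}(A) = (\{\mathsf{true}(A') \mid A' \in A\}, \top)$, i.e.\ the game morphism that relabels every node of a hereditarily finite set by $\top$. This is a well-defined and injective game morphism, and its image inside $\Omega$ is precisely the subgame consisting of those truth-closed labeled sets whose labels are \emph{all} $\top$; indeed, for an element of $\Omega$ the condition ``root label is $\top$'' is equivalent to ``all labels are $\top$'' because truth-closedness forces a $\top$ at the root to propagate to every descendant.

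Next, I would show that the bijection $\Pow(X) \cong \Gs(\X, \H_{\{\top,\bot\}})$ restricts, for every game $\X = (X, \str)$, to a bijection between the subgames of $\X$ and the game morphisms $\X \to \Omega$. In one direction: the characteristic map $\chi_S\colon \X \to \H_{\{\top,\bot\}}$ of a subset $S \subset X$ takes values in $\Omega$ iff each $\chi_S(x)$ is truth-closed, and an easy induction on the well-founded order $\succeq$ shows this is equivalent to the condition that for every $x \in S$ and every $x \rel x'$ one has $x' \in S$ --- exactly the definition of a subgame. In the other direction, any $f\colon \X \to \Omega$ determines the subset $T_f = \{x \in X \mid \text{root label of } f(x) \text{ is } \top\}$, which is downward-closed under $\rel$ by the truth-closedness of each $f(x)$, hence a subgame; the two constructions are mutually inverse by the adjoint bijection.

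For the pullback property, pick a subgame $S \hookrightarrow \X$. By \Cref{lem:InverseImageOfGames}, the pullback of the monomorphism $\mathsf{true}\colon \H \hookrightarrow \Omega$ along $\chi_S\colon \X \to \Omega$ is the inverse image $\chi_S^{-1}(\Image(\mathsf{true}))$. Since $\Image(\mathsf{true})$ is precisely the set of elements of $\Omega$ whose root label is $\top$, and the root label of $\chi_S(x)$ is $\top$ exactly when $x \in S$, this inverse image equals $S$; the induced top edge of the square is forced to be $\rd_S\colon S \to \H$ by terminality of $\H$. Finally, uniqueness follows from the previous paragraph: any $f\colon \X \to \Omega$ inducing $S$ as pullback of $\mathsf{true}$ must have root-label function equal to the characteristic function of $S$, so by the adjunction $f = \chi_S$.

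The main obstacle I expect is the conceptual step of recognizing that the two senses of ``downward closure'' --- subgames in $\X$ on the one hand, and truth-closed labelings in $\H_{\{\top,\bot\}}$ on the other --- are mirror images of one another under the $\chi$-correspondence, and that the adjoint bijection with the cofree game $\H_{\{\top,\bot\}}$ does the bookkeeping for free. Once that dictionary is set up, the verification of the pullback square reduces to a single application of \Cref{lem:InverseImageOfGames}, and uniqueness is essentially tautological.
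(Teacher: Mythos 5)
Your proof is correct, and its core step --- the induction showing that $S\subset X$ is a subgame exactly when every $\chi_S(x)$ is truth-closed, in both directions --- is the same argument as the paper's proof. The only difference is one of completeness: the paper stops at this factorization criterion and lets the representability discussion preceding the proposition supply the subobject-classifier axioms, whereas you additionally make the generic mono $\mathsf{true}\colon \H\to\Omega$ explicit and verify the pullback square and the uniqueness of the classifying map directly via \Cref{lem:InverseImageOfGames}; these added verifications are sound.
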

\begin{proof}
    We prove that, for any subset $S\subset X$ of any game $\X=(X, \str)$, $S$ is a subgame if and only if the characteristic map $\chi_{S}\colon \X \to \H_{\{\top, \bot\}}$ factors through the subgame $\Omega \rightarrowtail \H_{\{\top, \bot\}}$.

    Assuming that $S$ is a subgame, we prove that $\chi_{S}(x)$ is truth-closed for any $x\in X$ by induction. At the step of $x\in X$, the condition (2) in \Cref{def:TruthClosed} is verified by the induction hypothesis. Regarding condition (1), if $\chi_{S}(x)$ is labeled by $\top$, i.e., $x\in S$, then every $\chi_{S}(x')$ is also labeled by $\top$ for any $x\rel_{\X} x'$, since $S$ is a subgame.

Conversely, assume that $\chi_{S}$ factors through $\Omega$. Then, for any $x\in S$, $\chi_{S}(x)$ is truth-closed and labeled by $\top$. Therefore, for every $x\rel_{\X} x'$, $\chi_{S}(x')$ is also labeled by $\top$, which implies that $S$ is a subgame.
\end{proof}

\begin{remark}[$\Gs$ is not a topos]\label{rem:GsisNotaTopos}
    The reader may wonder if $\Gs$ is a Grothendieck topos, since it satisfies a lot of nice properties like topos, including the existence of a subobject classifier, epi-mono factorization system, the frame structure of the subobject lattice, local finite presentability, and so on. Unfortunately, $\Gs$ is not even cartesian closed. In fact, the product functor $\Star_2\times {-} \colon \Gs \to \Gs$ does not preserve colimits (cf. \Cref{fig:TheStarGame}). For example, the colimit of the canonical $\Z/2\Z$-action on $\Star_2$ is $\Star_1$. However, the colimit of induced $\Z/2\Z$-action on $\Star_2\times \Star_2$ has $6$ elements. 
    \[
    \colim_{\Z/2\Z} \left(\Star_2\times \Star_2 \right)\xrightarrow{\text{non-iso}}   \Star_2\times \colim_{\Z/2\Z}\Star_2 = \Star_2 \times \Star_1 = \Star_2
    \]
    This shows that the functor $\Star_2 \times {-}$ is not cocontinuous; in particular, $\Gs$ is neither cartesian closed nor a topos.
\end{remark}



\section{\texorpdfstring{Birthday $\dashv$ Nim $\dashv$ Grundy number}{Birthday Nim Grundy number}}\label{app:GrundyNumberisLeftAdjointToNim}
The Bouton theorem (\Cref{thm:Bouton}) looks particularly simple due to the fact that the Grundy number map $\G_{\Nim{1}}\colon \N \to \N$ turns out to be the identity map $\id_{\N}$ (cf. \Cref{exmp:AnalysisOfNim}). The aim of this appendix is to explain this coincidence $\G_{\Nim{1}}=\id_{\N}$ by a \dq{categorical origin} of the mex function $\mex\colon \Pf(\N)\to \N$.

We start with a simple observation.
\begin{proposition}\label{prop:SectionIdentity}
For an endofunctor $T\colon \C \to \C$ on a category $\C$, a $T$-algebra $\A=(A,\alpha)$, and a recursive $T$-coalgebra $\X=(A, \str)$ on the same object $A\in \ob(\C)$, the following two condtions are equivalent.
    \begin{enumerate}
        \item $\str$ is the section of $\alpha$, i.e., $\alpha\circ \str = \id_A$.
        \item The hylomorphism $\hylo_{\A,\X}\colon A \to A$ coincides with the identity map $\id_A$.
    \end{enumerate}
%
\end{proposition}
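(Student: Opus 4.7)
The proof reduces immediately to unpacking the defining diagram of a coalgebra-algebra morphism. Recall that a morphism $f\colon A \to A$ is a coalgebra-algebra morphism from $\X=(A,\str)$ to $\A=(A,\alpha)$ precisely when
\[
f \;=\; \alpha \circ Tf \circ \str.
\]
Substituting $f = \id_A$ and using functoriality $T(\id_A) = \id_{TA}$, this condition collapses to
\[
\id_A \;=\; \alpha \circ \str,
\]
which is exactly condition (1). In other words, condition (1) is equivalent to saying that $\id_A$ is a coalgebra-algebra morphism from $\X$ to $\A$.

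The plan is then as follows. For $(1)\Rightarrow (2)$, I would observe that under (1), $\id_A$ is a coalgebra-algebra morphism $\X \to \A$, and since $\X$ is recursive, the hylomorphism is the \emph{unique} such morphism; therefore $\hylo_{\A,\X} = \id_A$, giving (2). For $(2)\Rightarrow (1)$, I would read off the claim directly: by definition of hylomorphism, $\hylo_{\A,\X}$ satisfies the coalgebra-algebra compatibility, so substituting $\hylo_{\A,\X} = \id_A$ into the equation $\hylo_{\A,\X} = \alpha \circ T(\hylo_{\A,\X}) \circ \str$ yields $\id_A = \alpha \circ \str$.

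There is no real obstacle here; the statement is essentially a one-line diagram chase, and the whole content sits in the uniqueness clause of recursiveness (Definition~\ref{def:recursiveCoalgebras}), which is exactly what converts ``$\id_A$ satisfies the diagram'' into ``$\id_A$ equals the hylomorphism.'' The proposition is really the specialization, to the case of an endomorphism of $A$, of the general observation that a coalgebra-algebra morphism is determined by the commutative square $f = \alpha \circ Tf \circ \str$, and will be used in the sequel (together with a computation of $\Nim{1}$ as a coalgebra, cf.\ Example~\ref{exmp:NimCoalgebra}) to identify $\mex$ as the algebra structure for which $\nu\colon \N \to \Pf(\N)$ is a section, thereby explaining $\G_{\Nim{1}} = \id_{\N}$.
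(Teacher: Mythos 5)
Your proof is correct and follows essentially the same route as the paper: the paper's proof simply notes that both conditions are equivalent to the commutativity of the square expressing that $\id_A$ is a coalgebra-algebra morphism (using $T(\id_A)=\id_{TA}$), with the uniqueness clause of recursiveness doing the same work you assign it in the direction $(1)\Rightarrow(2)$. Your write-up just makes explicit what the paper leaves as a one-line diagram observation.
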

\begin{proof}
Both conditions are equivalent to the commutativity of 
    \[
    \begin{tikzcd}
        TA\ar[r, "T(\id_{A})"]&TA\ar[d,"\alpha"']\\
        A\ar[u,"\str"]\ar[r,"\id_{A}"]&A.
    \end{tikzcd}
    \]
\end{proof}

Therefore, unsurprisingly, the coincidence $\G_{\Nim{1}}(=\hylo_{\Mex,\Nim{1}})=\id_{\N}$ comes from the fact that the structure map of nim $\nu$ is a section of $\mex$, i.e., $\mex\circ \nu = \id_\N$
\[
\mex\circ\nu(n)=\mex\{0,1, \dots, n-1\}=n.
\]
What might be a bit surprising is that $\mex$ is actually the canonical choice of a retraction of $\nu$ in a categorical sence!
When a canonical \dq{reverse} of a map $\nu\colon \N \to \Pf(\N)$ is needed, we category theorists think like \dq{ok, then consider its adjoint!}
\[
    \begin{tikzcd}
        \N \ar[r, shift left= 5pt,"\nu" name=A]&\Pf(\N)\ar[l, shift left= 5pt, "?"name=B]\ar[phantom, from= A, to=B, "\dashv" rotate=-90]
    \end{tikzcd}
\]
And the adjoint turns out to be mex! Here, we consider the usual order on each set, namely, the usual order $\leq$ on $\N$ and the inclusion relation on $\Pf(\N)$. 

\begin{proposition}[A characterization of mex]
    The right adjoint of $\nu$ is $\mex$.
    \[
    \begin{tikzcd}
        \N \ar[r, shift left= 5pt,"\nu" name=A]&\Pf(\N)\ar[l, shift left= 5pt, "\mex"name=B]\ar[phantom, from= A, to=B, "\dashv" rotate=-90]
    \end{tikzcd}
\]
\end{proposition}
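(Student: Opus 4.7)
The plan is to verify the adjunction condition directly in the two-element-poset (i.e., thin-category) setting. Since $\N$ with $\leq$ and $\Pf(\N)$ with $\subseteq$ are both posets, and since functors between posets are exactly monotone maps, the only data to check is monotonicity of $\nu$ and $\mex$, together with the universal property
\[
\nu(n) \subseteq S \iff n \leq \mex(S)
\]
for all $n\in\N$ and $S\in\Pf(\N)$.

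First I would check monotonicity. For $\nu$: if $n\leq m$ then $\{0,1,\dots,n-1\}\subseteq\{0,1,\dots,m-1\}$, so $\nu(n)\subseteq\nu(m)$. For $\mex$: if $S\subseteq T$, then the complement $T^{\mathrm c}\subseteq S^{\mathrm c}$, so $\min S^{\mathrm c}\leq\min T^{\mathrm c}$, i.e., $\mex(S)\leq \mex(T)$.

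The heart of the argument is the biconditional. Both sides of
\[
\{0,1,\dots,n-1\}\subseteq S \iff n\leq\mex(S)
\]
unfold to the same condition, namely that $S$ contains all of $0,1,\dots,n-1$. For the ($\Rightarrow$) direction, if $0,\dots,n-1\in S$, then every excluded value of $S$ is $\geq n$, so $\mex(S)=\min S^{\mathrm c}\geq n$. For the ($\Leftarrow$) direction, if $n\leq\mex(S)=\min S^{\mathrm c}$, then every element of $S^{\mathrm c}$ is $\geq n$, so $0,1,\dots,n-1\in S$, i.e., $\nu(n)\subseteq S$.

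I do not anticipate any real obstacle here; the proof is essentially a one-line unwinding of definitions once one recognises that $n\leq \mex(S)$ is by definition precisely the statement ``$\{0,1,\dots,n-1\}\subseteq S$''. The result then tightens the loop with \Cref{prop:SectionIdentity}: the counit $\nu\circ\mex\Rightarrow \id_{\Pf(\N)}$ at a set $S$ gives $\nu(\mex(S))\subseteq S$, while the unit $\id_\N \Rightarrow \mex\circ \nu$ at $n$ gives $n\leq \mex(\nu(n))$; specializing the latter and using $\mex(\nu(n))=\mex\{0,1,\dots,n-1\}=n$ recovers the equality $\mex\circ\nu=\id_\N$, and hence via \Cref{prop:SectionIdentity} the coincidence $\G_{\Nim{1}}=\id_\N$ that motivates the whole appendix.
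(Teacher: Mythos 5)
Your proof is correct and follows essentially the same route as the paper: both verify the Galois-connection condition $\nu(n)\subseteq S \iff n\leq \mex(S)$ directly by unwinding $\mex(S)=\min S^{\mathrm{c}}$ (the paper phrases the middle step via complements, which is the same observation as your ``every excluded value of $S$ is $\geq n$''). Your explicit monotonicity check and the closing remarks about \Cref{prop:SectionIdentity} are harmless additions but not a different argument.
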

\begin{proof}
    Take any $n\in \N$ and $S\in \Pf(\N)$. Then, we have
    \begin{align*}
                n\leq \mex{S} 
        &\iff   n\leq \min {S^{\mathrm{c}}}\\
        &\iff   \nu (n)^{\mathrm{c}} \supset S^{\mathrm{c}}\\
        &\iff   \nu (n) \subset S.
    \end{align*}
\end{proof}

Since $\nu$ is fully faithful, its right adjoint $\mex$ should be a retract of $\nu$ thanks to the general theory of adjoint functors (and the skeletality of the posets). So we can apply \Cref{prop:SectionIdentity}.






Similar phenomena are ubiquitous in this paper. For example, $\nu\colon \N \to \Pf(\N)$ also admits a right adjoint, which turns out to be $\xem$!
\[
    \begin{tikzcd}
        \N \ar[r, shift left= 5pt,"\nu" name=A]&\Pf(\N)\ar[l, shift left= 5pt, "\xem"name=B]\ar[phantom, from= A, to=B, "\dashv" rotate=90]
    \end{tikzcd}
\]
Therefore, \Cref{prop:SectionIdentity} implies that $\BirthDay_{\Nim{1}}\colon \N \to \N$ is also the identity map. This is the meaning of the title of this appendix: Birthday $\dashv$ Nim $\dashv$ Grundy number, which actually means
\[
\xem \dashv \nu \dashv \mex.
\]

    

\revmemo{
\invmemo{remark: LSC}
\invmemo{"almost all states are N-state"}
}
Another example is the remoteness.
\begin{example}\label{exmpl:remotenessAndEffeuillerLaMarguerite}
    The game $\ElM$ (\Cref{exmp:EffeuillerLaMarguerite}) is a section of the remoteness (\Cref{exmp:remoteness}) up to a canonical bijection.
\end{example}

\printbibliography

\end{document}